\tikzstyle arrowstyle=[scale=1]
\tikzstyle directed=[postaction={decorate,decoration={markings, mark=at position 0.75 with {\arrow[arrowstyle]{stealth}}}}]
\tikzstyle redirected=[postaction={decorate,decoration={markings, mark=at position 0.35 with {\arrow[arrowstyle]{stealth}}}}]
\newtheorem{theorem}{Theorem}[section]
\newtheorem{corollary}[theorem]{Corollary}
\newtheorem{definition}[theorem]{Definition}
\newtheorem{conjecture}[theorem]{Conjecture}
\newtheorem{lemma}[theorem]{Lemma}
\newtheorem{claim}{Claim}[theorem]
\newtheorem{proposition}[theorem]{Proposition}
\DeclareMathOperator{\supp}{{\rm supp}}
\DeclareMathOperator{\Z}{\mathbb{Z}}
\newcommand{\JCTB}{{\it J. Combin. Theory Ser. B}}
\newcommand{\JGT}{{\it J. Graph Theory}}
\newcommand{\DM}{{\it Discrete Math.}}
\newcommand{\SIAMDM}{{\it SIAM J. Discrete Math.}}
\newcommand{\CJM}{{\it Canad. J. Math.}}
\newcommand{\PLMS}{{\it Proc. London Math. Soc.}}
\newcommand{\EJC}{{\it European J. Combin.}}
\begin{document}
\title{Integer flows on  triangularly connected signed  graphs}
\author{Liangchen Li \thanks{Partially supported by NSFC (No. 12271438),  Basic Research Foundation of Henan Educational Committee (No. 20ZX004)}\\ School of Mathematical Sciences\\ Luoyang Normal University \\Luoyang  471934, China\\ Email: liangchen\_li@163.com. \\
Chong Li \\
Mathematics and Computer Science\\
 Southern Arkansas University\\
 100 E. University, Magnolia, Arkansas 71753-5000\\ Email: chongli@saumag.edu\\
Rong Luo\thanks{Partially supported by a grant from  Simons Foundation (No. 839830)}, Cun-Quan Zhang\thanks{Partially supported
 by an  NSF grant DMS-1700218} \\
Department of Mathematics\\
 West Virginia University\\
  Morgantown, WV 26505, USA\\
   Emails:   rluo@mail.wvu.edu, cqzhang@mail.wvu.edu
 }
\date{}
\maketitle

\vspace{-0.2cm}
\begin{abstract}
A triangle-path in a graph $G$ is a sequence of distinct triangles $T_1,T_2,\ldots,T_m$ in $G$ such that for any  $i, j$ with $1\leq i  < j  \leq m$, $|E(T_i)\cap E(T_{i+1})|=1$ and $E(T_i)\cap E(T_j)=\emptyset$ if $j > i+1$. A connected graph $G$ is triangularly connected if for any two nonparallel edges $e$ and $e'$ there is a triangle-path $T_1T_2\cdots T_m$ such that $e\in E(T_1)$ and $e'\in E(T_m)$.
For ordinary graphs, Fan {\it et al.}~(J. Combin. Theory Ser. B 98 (2008) 1325-1336) characterize all triangularly connected graphs that admit nowhere-zero $3$-flows or $4$-flows.
Corollaries of this result include
integer flow of some families of ordinary graphs, such as, locally connected graphs due to Lai (J. Graph Theory 42 (2003) 211-219) and some types of products of graphs due to Imrich et al.(J. Graph Theory 64 (2010) 267-276).
In this paper, Fan's result for triangularly connected graphs is
further extended to
 signed graphs.
We proved that a flow-admissible  triangularly connected signed graph  admits a nowhere-zero
$4$-flow if and only if  it is not
the wheel $W_5$ associated with a specific signature.
 Moreover, this result is sharp since there are infinitely many unbalanced   triangularly  connected signed graphs
admitting a nowhere-zero $4$-flow but no $3$-flow.

Keywords: Signed graph; nowhere-zero flows; triangularly connected.
\end{abstract}
\vspace{-0.3cm}
\section{Introduction}
\vspace{-0.2cm}
Graphs  or signed graphs in this paper are finite and may have loops and parallel edges.  For terminology and notations not defined here we follow \cite{Bondy2008, Diestel2010, West1996}.

The theory of integer flows which is a dual problem to the vertex coloring of planar graphs was introduced by Tutte \cite{Tutte54,Tutte1949}.
Tutte's flow theory
has been extended to signed graphs \cite{Bouchet1983}.
 The concept of integer flows on signed graphs naturally comes from the study of graphs embedded on nonorientable surfaces, where nowhere-zero flow emerges as the dual of
 local tension.  Bouchet \cite{Bouchet1983}  in 1983 proposed the following conjecture.

\vspace{-0.2cm}
\begin{conjecture}
\label{CONJ: Bouchet}
{\rm (Bouchet \cite{Bouchet1983})}
Every flow-admissible signed graph admits a nowhere-zero $6$-flow.
\end{conjecture}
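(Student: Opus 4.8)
The plan is to follow the architecture of Seymour's $6$-flow theorem for ordinary graphs and then try to transport it across the signature. First I would dispose of the balanced case: if $(G,\sigma)$ is balanced it is switching-equivalent to an all-positive signed graph, a nowhere-zero $k$-flow on it is exactly a classical nowhere-zero $k$-flow on the underlying graph $G$, and flow-admissibility then forces $G$ to be bridgeless, so Seymour's theorem delivers a $6$-flow directly. Thus the whole difficulty sits with \emph{unbalanced} signed graphs, and I would pass to a minimal counterexample. Smoothing degree-$2$ vertices, stripping off balanced blocks, and removing the bridges permitted by Bouchet's flow-admissibility characterization should let me assume the core is $2$-edge-connected, essentially $3$-edge-connected, and carries an unbalanced circuit through every edge.

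The engine of the ordinary proof is the factorization $6=2\cdot 3$: one realizes a nowhere-zero $\mathbb{Z}_6$-flow as a pair $(\beta,\tau)$, where $\beta$ is a $\mathbb{Z}_2$-flow and $\tau$ a $\mathbb{Z}_3$-flow whose supports together cover every edge. I would seek the same pair on $(G,\sigma)$: produce a \emph{signed even subgraph} $H$ supporting a nowhere-zero $\mathbb{Z}_2$-flow, build a nowhere-zero $\mathbb{Z}_3$-flow on the complementary structure, and patch the two so that no edge is left at value $0$ in both coordinates. The natural way to manufacture the $\mathbb{Z}_2$ part is a signed analogue of the classical fact that a well-connected graph is covered by a few even subgraphs, obtained by lifting to the signed double cover $\widetilde{G}$ (an ordinary bridgeless graph), invoking the covering results there, and projecting back down to $(G,\sigma)$.

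The hard part will be keeping the projected flow nowhere-zero across the negative edges. A flow on $(G,\sigma)$ corresponds to an \emph{antisymmetric} flow on $\widetilde{G}$, namely one negated by the deck involution, and the projection of an arbitrary nowhere-zero flow on $\widetilde{G}$ need not be antisymmetric, while symmetrizing it tends to create zeros precisely on the negative edges and on the unbalanced circuits threading them. Concretely, $\mathbb{Z}_3$ carries no involution, so a negative edge — which behaves like an edge forced to carry a value equal to its own inverse — cannot be serviced by the $\mathbb{Z}_3$ coordinate alone, and the $\mathbb{Z}_2$ coordinate must then cover every unbalanced circuit in a globally coordinated way. Making the even-subgraph cover and the $\mathbb{Z}_3$-flow respect this constraint \emph{simultaneously} is exactly the gap that the known partial results (nowhere-zero $11$-flows in general, and $6$-flows only under extra connectivity or girth hypotheses) have not closed, and I would expect a genuine proof to require either a new decomposition tailored to unbalanced circuits or an inductive/discharging argument that controls the negative edges directly rather than through the double cover.
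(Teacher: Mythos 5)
The statement you are trying to prove is Conjecture~\ref{CONJ: Bouchet}, which the paper records as an \emph{open} conjecture of Bouchet; the paper contains no proof of it, and the strongest known general result replaces $6$ by $11$ (DeVos et al.~\cite{DLLLZZ}). So there is no ``paper proof'' to compare against, and your proposal must be judged on its own. As written it is a programme, not a proof, and you concede as much in your final sentence: the step where the $\mathbb{Z}_2$-coordinate (an even-subgraph cover respecting the unbalanced circuits) and the $\mathbb{Z}_3$-coordinate must be produced \emph{simultaneously} so that their supports cover $E(G)$ is precisely the step you do not carry out. Identifying where the difficulty lives is not the same as resolving it, so the argument has a genuine and acknowledged gap at its core.

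There is also a second obstruction you pass over too quickly. Seymour's architecture ends by converting a nowhere-zero $\mathbb{Z}_2\times\mathbb{Z}_3$-flow into an integer $6$-flow, and that conversion rests on Tutte's equivalence between modulo flows and integer flows. As the introduction of this paper emphasizes, that equivalence fails for signed graphs: a signed graph with a nowhere-zero $\mathbb{Z}_k$-flow need not admit a nowhere-zero integer $k$-flow (the paper has to work hard, via Lemmas~\ref{Bridges:mod3-to-3} and~\ref{4-edges:mod3-to-3}, just to recover this implication in very restricted situations). So even if you succeeded in building the coordinated $(\beta,\tau)$ pair, you would only have a nowhere-zero $\mathbb{Z}_6$-flow (or $\mathbb{Z}_2\times\mathbb{Z}_3$-flow), and a further, currently unavailable, argument would be needed to upgrade it to an integer $6$-flow. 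Both gaps would have to be closed for this route to settle the conjecture.
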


\vspace{-0.2cm}
The best result toward Conjecture~\ref{CONJ: Bouchet} is due to DeVos
{\it et al.}~\cite{DLLLZZ}  who prove that the conjecture is true with $6$ replaced by $11$.
 Integer flows on signed graphs also have been studied for many specific families of  graphs. For more details, the readers are refereed to   \cite {HuLi-wheel-2018, LLSSZ2018, LLZh-EJC2018, Kaiser2016, Macajova2016JGT, MS-eulerian, Raspaud2011, WLZZ, Wu2014}.

In this paper, we investigate  nowhere-zero  integer flows in  triangularly connected signed  graphs.
For  triangularly connected  ordinary graphs,
 Fan {\it et al.}~\cite{Fan2008} show that every  triangularly connected ordinary graph admits a nowhere-zero $4$-flow and they also characterize all such graphs not admitting a nowhere-zero $3$-flow.    For its signed counterpart, we prove the following result.

\begin{theorem}\label{Tri-5-flow}
If $(G,\sigma)$ is a flow-admissible  triangularly connected signed  graph, then $(G,\sigma)$ admits a nowhere-zero $4$-flow if and only if $(G,\sigma)\neq (W_5,\sigma^*)$ where $(W_5, \sigma^*)$ is the signed graph in Figure~\ref{FIG: 1}. Moreover there are infinitely many triangularly connected unbalanced signed graphs that admit a nowhere-zero $4$-flow but no $3$-flow.
\end{theorem}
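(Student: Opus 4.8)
The plan is to prove the ``if and only if'' statement by induction on the number of edges of $(G,\sigma)$, separating the balanced and unbalanced cases. If $(G,\sigma)$ is balanced, then a suitable switching turns every edge positive, so $(G,\sigma)$ becomes an ordinary triangularly connected graph; by the theorem of Fan \emph{et al.}~\cite{Fan2008} such a graph always admits a nowhere-zero $4$-flow, and switching back produces one on $(G,\sigma)$. As $(W_5,\sigma^*)$ is unbalanced, no balanced graph can be the exception, so the exceptional case can occur only when $(G,\sigma)$ is unbalanced.

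The unbalanced case is the core of the argument. I would first use triangular connectivity to fix an ordering $T_1,\dots,T_m$ of triangles with $H_i:=T_1\cup\cdots\cup T_i$ triangularly connected and each $T_{i+1}$ sharing at least one edge with $H_i$. The essential new difficulty, absent in the ordinary theory, is that an \emph{unbalanced} triangle carries no nonzero flow on its own, so the flow cannot be assembled triangle by triangle; instead it must be built from signed-circuit flows, namely flows on balanced circuits and on barbells (two unbalanced circuits joined by a path, possibly meeting at a single vertex). The induction step is then to locate a triangle $T$ meeting the remainder of $(G,\sigma)$ whose contraction, or the deletion of an edge it shares, yields a smaller flow-admissible triangularly connected signed graph $(G',\sigma')$ with $(G',\sigma')\neq(W_5,\sigma^*)$. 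The inductive hypothesis supplies a nowhere-zero $4$-flow on $(G',\sigma')$, which I would extend across $T$ by superposing a triangle or barbell flow, choosing the added value from $\{\pm1,\pm2,\pm3\}$ so that every edge of $T$ and every altered edge remains nonzero. The few small graphs admitting no such reduction must be examined by hand, and it is there that $(W_5,\sigma^*)$ is forced to appear as the unique obstruction.

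For the forward direction I would verify directly that $(W_5,\sigma^*)$ has no nowhere-zero $4$-flow, through a finite case analysis that uses conservation at the degree-$5$ hub together with the constraints imposed by the unbalanced rim. For the concluding sharpness claim I would exhibit an explicit infinite family of unbalanced triangularly connected signed graphs, for instance signed wheels $W_n$ carrying a single negative edge: these are unbalanced and triangularly connected, and being distinct from $(W_5,\sigma^*)$ they admit a nowhere-zero $4$-flow by the main part. It then remains to rule out a nowhere-zero $3$-flow for infinitely many of them, which I would do by a modular obstruction at a vertex of appropriate degree, paralleling the classical fact that odd wheels admit no nowhere-zero $3$-flow.

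The hard part will be the extension step in the unbalanced induction. Since an unbalanced triangle supports no flow by itself, one cannot simply add a nowhere-zero circulation on the new triangle as in the ordinary case; the reduction must be arranged so that the affected unbalanced circuit is paired with a companion unbalanced circuit into a barbell capable of absorbing the correction, and one must prove that the only configurations resisting every admissible reduction are exactly those equal to $(W_5,\sigma^*)$. Simultaneously preserving triangular connectivity and flow-admissibility under contraction, while isolating this single exception, is where the bulk of the case analysis will be concentrated.
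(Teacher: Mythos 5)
There is a genuine gap, and it occurs in two places. First, your proposed sharpness family fails outright: a signed wheel carrying a single negative edge is a signed graph with exactly one negative edge, and by Proposition~\ref{flow admissible} such a graph is not flow-admissible, so it admits no nowhere-zero $k$-flow for any $k$ and cannot witness the claim ``admits a $4$-NZF but no $3$-NZF.'' Even after repairing the family, the ``modular obstruction at a vertex'' you invoke to rule out a $3$-NZF leans on the ordinary-graph equivalence between modulo flows and integer flows, which (as the paper emphasizes) breaks down for signed graphs; the paper instead takes Eulerian graphs $G_{2t}$ with an odd number of negative edges and applies the Ma\v{c}ajov\'a--\v{S}koviera decomposition criterion (Theorem~\ref{eulerian}) to exclude a $3$-NZF.

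Second, the inductive extension step that you yourself flag as ``the hard part'' is precisely the point where the strategy has no working mechanism, and it is not a detail one can defer. Contracting a triangle does not interact cleanly with the signature (contracting an unbalanced triangle is not even well defined as a signed graph), deleting a shared edge can destroy flow-admissibility or triangular connectivity, and the companion unbalanced circuit needed to form a barbell absorbing the correction may lie far from $T$, so no local choice of a value in $\{\pm1,\pm2,\pm3\}$ controls it. The paper avoids triangle-by-triangle assembly entirely: it takes a minimum counterexample with respect to $\beta(G)=\sum_v(d(v)-2)$, produces a single global $\mathbb Z_3$-flow whose zero set lies inside one unbalanced triangle via the $\Phi_2$-closure and Z\'yka's theorem (Lemmas~\ref{2-operation} and~\ref{2-operation-2}), converts such modulo flows to integer $3$- and $4$-flows using the bridge-structure and circuit-extension lemmas (Lemmas~\ref{Bridges:mod3-to-3}--\ref{f-extension}), and then forces $(W_5,\sigma^*)$ through a list of forbidden configurations rather than through a base case of an edge induction. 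Without a substitute for that machinery, your induction does not close.
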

\begin{figure}\begin{center}
\tikzstyle{help lines}+=[dashed]
\begin{tikzpicture}[scale=0.7]
\draw [fill=black] (0,0) circle (0.06);
\draw [fill=black] (18:1.5) circle (0.06);
\draw [fill=black] (90:1.5) circle (0.06);
\draw [fill=black] (162:1.5) circle (0.06);
\draw [fill=black] (234:1.5) circle (0.06);
\draw [fill=black] (306:1.5) circle (0.06);
\draw[dashed] (18:1.5)--(90:1.5)--(162:1.5)--(234:1.5)--(306:1.5)--(18:1.5);
\draw (0,0)--(18:1.5) (0,0)--(90:1.5) (0,0)--(162:1.5) (0,0)--(234:1.5) (0,0)--(306:1.5);
\end{tikzpicture}
\caption{\small  $(W_5,\sigma^*)$ has a 5-NZF but no 4-NZF. Dotted edges are negative.}
\label{FIG: 1}
\end{center}
\end{figure}
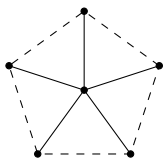

 A graph is {\it locally connected}  if the subgraph induced by the neighbor of each vertex is connected.
It is known that
locally connected graphs, square of graphs, chordal graphs, triangulations on surfaces, and some types of products of graphs are triangularly connected (such as \cite{Imrich2010},  \cite{Lai2003},
for ordinary graphs) and thus
  we have the following corollary.

\begin{corollary}
Let $(G,\sigma)$ be a flow-admissible signed graph. If $G$ is locally connected, then $(G,\sigma)$ admits a nowhere-zero $4$-flow if and only if $(G,\sigma)\neq (W_5,\sigma^*)$. In particular, if $G$ is the square of a connected graph or is the strong product of graphs, then $(G,\sigma)$ admits a nowhere-zero $4$-flow.
\end{corollary}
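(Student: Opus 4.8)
The plan is to deduce the corollary directly from Theorem~\ref{Tri-5-flow}, so that no new flow theory is needed. Since flow-admissibility is assumed throughout the statement, it suffices to establish two facts for each of the three subfamilies: that the underlying graph $G$ is triangularly connected, and that $(G,\sigma)$ is not the exceptional signed graph $(W_5,\sigma^*)$. Once both hold, Theorem~\ref{Tri-5-flow} delivers a nowhere-zero $4$-flow. The triangular connectivity of connected locally connected graphs, of squares of connected graphs, and of strong products is already known (see \cite{Lai2003} and \cite{Imrich2010}), so for the locally connected assertion the corollary is merely the contrapositive form of Theorem~\ref{Tri-5-flow}, and the real work lies in ruling out the single exception in the square and strong-product cases.

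For completeness I would record why each class is triangularly connected. If $G$ is connected and locally connected, then for any vertex $v$ and any two incident edges $vu,vw$, a $u$--$w$ path $u=x_0,x_1,\dots,x_k=w$ inside the connected graph $G[N(v)]$ yields triangles $vx_{i-1}x_i$ in which consecutive ones share the spoke $vx_i$; after discarding repetitions this is a triangle-path joining $vu$ to $vw$. Hence all edges at a common vertex are triangularly connected, and threading these local triangle-paths along a connecting walk, which exists by connectivity of $G$, joins any two nonparallel edges. For $G=H^2$ the distance-two pairs created through a common neighbor supply the requisite triangles, and for strong products the ``diagonal'' adjacencies do; in both cases the cited references furnish the full details.

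It then remains to exclude $(W_5,\sigma^*)$. For the square case, note that in $G^2$ the set $\{v\}\cup N_G(v)$ is always a clique: $v$ is adjacent in $G^2$ to every vertex of $N_G(v)$, and any two vertices of $N_G(v)$ lie at distance at most two in $G$ through $v$ and so are adjacent in $G^2$. Thus $1+\deg_G(v)\le\omega(G^2)$. Since $\omega(W_5)=3$ (the largest cliques being the hub--rim triangles), the equation $G^2=W_5$ would force $\Delta(G)\le 2$, so the connected $6$-vertex graph $G$ is $P_6$ or $C_6$; but $P_6^2$ has maximum degree $4$ and $C_6^2$ is $4$-regular, whereas $W_5$ has a vertex of degree $5$. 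Hence $W_5$ is not the square of any connected graph. For the strong-product case with both factors nontrivial, $|V(W_5)|=6$ forces factor orders $2$ and $3$, and connectedness of $W_5$ forces the factors to be $K_2$ and a connected graph on three vertices, i.e.\ $K_2\boxtimes K_3=K_6$ or $K_2\boxtimes P_3$; the former is $5$-regular and the latter has two vertices of degree $5$, while $W_5$ has exactly one. Hence $W_5$ is not a nontrivial strong product, the exception cannot occur in either subfamily, and $(G,\sigma)$ admits a nowhere-zero $4$-flow.

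The genuinely substantive input is Theorem~\ref{Tri-5-flow}; everything else is reduction. I expect the only real care to be needed in the triangular-connectivity argument, where, when concatenating local triangle-paths along a walk, one must respect the defining conditions of a triangle-path, namely distinctness of the triangles and edge-disjointness of nonconsecutive ones, pruning the concatenation to a genuine triangle-path. The two exclusions above are finite structural verifications and present no further difficulty.
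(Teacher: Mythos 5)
Your proposal is correct and takes essentially the same route as the paper: the corollary is stated there as an immediate consequence of Theorem~\ref{Tri-5-flow} together with the known fact (cited to \cite{Lai2003} and \cite{Imrich2010}) that locally connected graphs, squares of connected graphs, and strong products are triangularly connected. Your explicit finite verifications that $W_5$ is neither the square of a connected graph nor a nontrivial strong product make precise a step the paper leaves implicit, and both checks are accurate.
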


It is worth to pointing out that in the study of flows of ordinary graphs, Tutte's result  on the equivalence of modulo flows and integer flows serves as one of  most powerful tools (see \cite{Tutte1949}). However this equivalence no longer holds for signed graphs.  Therefore the approach  in the proof of our theorem is significantly different and difficult comparing with that of its ordinary counterpart.

The rest of the paper is organized as follows: Basic notations and definitions will be introduced in Section~\ref{se:defintions}.  Section~\ref{se:useful-lemma} will present some lemmas needed in the proofs of the main result.  In Section \ref{se:sharpness}, we will present a family of unbalanced triangularly connected  graphs  that admit  nowhere-zero $4$-flows but no $3$-flows.
The proof of Theorem \ref{Tri-5-flow} will be presented in Section~\ref{se:the proof}.

\vspace{-0.3cm}
\section{Notations and Terminology}
\label{se:defintions}
\vspace{-0.2cm}
A triangle-path of length $m$, denoted by $T_1T_2\cdots T_m$ in $G$ is a sequence of distinct triangles $T_1, T_2, \dots,  T_m$ in $G$ such that for  any $1\leq i < j \leq m$,

\centerline{$ |E(T_i)\cap E(T_{i+1})|=1 \mbox{  and } E(T_i)\cap E(T_j)=\emptyset \mbox{ if } j> i+1.$}

 A connected graph $G$ is triangularly connected if for any two nonparallel distinct edges  $e$ and $e'$, there is a triangle-path $T_1T_2\cdots T_m$ such that $e\in E(T_1)$ and $e'\in E(T_m)$. Trivially, the graph with a single edge is triangularly connected.     Let $H_1,H_2, \dots, H_t$ be  subgraphs of $G$. Denote by $H_1\triangle H_2 \triangle \cdots \triangle H_t$ the symmetric difference of those subgraphs.

Let $G$ be a graph. Let  $U_1$ and $U_2$ be two disjoint vertex sets. Denote by $\delta_G(U_1,U_2)$ the set of edges with one end in $U_1$ and the other in $U_2$. For convenience, we write $\delta_G(U_1)$ for $\delta_G(U_1,V(G)\setminus U_1)$. We use $B(G)$ to denote the set of bridges of $G$.  A path in $G$ is said to be a \emph{subdivided edge} of $G$ if every internal vertex of $P$  has degree $2$.

 A \emph{signed graph} $(G,\sigma)$ is a graph $G$ together with a {\em signature} $\sigma: E(G) \to \{-1,1\}$. An edge $e \in E(G)$ is \emph{positive} if $\sigma(e) =1$ and \emph{negative} otherwise. Denote the set of all negative edges of $(G,\sigma)$ by $E_N(G,\sigma)$ (or simply $E_N(G))$. For a vertex $v$ in $G$, we define a new signature $\sigma'$ by changing $\sigma'(e) = -\sigma(e)$ for each $e\in \delta_G(v)$.
We say that $\sigma'$ is obtained from $\sigma$ by making a \emph{switch} at the vertex $v$.
Two signatures are said to be {\em equivalent} if one can be obtained from the other by making a sequence of switch operations.

For convenience, the signature $\sigma$ is usually omitted if no confusion arises or is written as $\sigma_G$ if it needs to emphasize $G$. For a subgraph $H$ of $G$, denote by $(H,\sigma|_H)$ the signed graph where $\sigma|_H$ is the restriction of $\sigma$ on $E(H)$.
If there is no confusion from the context, we simply use $H$ to denote the signed subgraph.

Every edge of $G$ is composed of two half-edges $h$ and $\hat{h}$, each of which is incident with one end.
Denote the set of half-edges of $G$ by $H(G)$ and the set of half-edges incident with $v$ by $H_G(v)$.
For a half-edge $h \in H(G)$, we use  $e_{h}$ to refer to  the edge containing $h$.
An {\em orientation} of a signed graph $(G, \sigma)$ is a mapping $\tau: H(G) \to \{-1,1\}$
such that $\tau(h) \tau(\hat{h}) = -\sigma(e_{h})$ for each $h \in H(G)$.
It is convenient to consider $\tau$ as an assignment of orientations on $H(G)$.
Namely, if $\tau(h) =1$, $h$ is a half-edge oriented away from its end and otherwise towards its end. Such an ordered pair $(G,  \tau)$ is called a {\em bidirected graph}.

\begin{definition} Let $(G,\tau)$ be a bidirected graph,
 $A$ be an abelian group, and $f: E(G) \to A$ be a mapping. The pair $(\tau,f)$ (or to simplify, $f$) is an \emph{$A$-flow} of $G$ if $\sum_{h\in H_G(v)} \tau (h) f(e_h)=0$ for each $v\in V(G)$, and is an (integer) \emph{$k$-flow} if it is a $\mathbb{Z}$-flow and $|f(e)|<k$ for each $e\in E(G)$.
\end{definition}

Let $f$ be a flow of a signed graph $G$. The {\rm support} of $f$, denoted by $\supp (f)$, is the set of edges $e$ with $f(e)\neq 0$. The flow $f$ is {\em nowhere-zero} if $\supp (f) = E(G)$. For convenience, we abbreviate the notions of
{\em nowhere-zero $A$-flow} and
{\em nowhere-zero $k$-flow} as
{\em
$A$-NZF}
 and
 {\em $k$-NZF}, respectively.
 Observe that $G$ admits an $A$-NZF (resp., a $k$-NZF) under an orientation $\tau$ if and only if it admits an $A$-NZF (resp., a $k$-NZF) under any orientation $\tau'$.
A $\mathbb Z_k$-flow is also called a modulo $k$-flow. For an integer flow $f$ of $G$ and a positive integer $t$, let $E_{f=\pm t} =\{e\in E(G) : |f(e)|=t\}$. For any subgraph $H$ of $G$, denote $f(H) = \{f(e) : e\in E(H)\}$.

 A circuit is \emph{balanced} if it contains an even number of negative edges, and is \emph{unbalanced} otherwise. A signed graph is called {\it balanced} if it contains no unbalanced circuit and is called \emph{unbalanced} otherwise.   A balanced signed graph is equivalent to an ordinary graph. A signed circuit is defined as a signed graph of one of the following three types:

(1) a balanced circuit;

(2) a short barbell, the union of two unbalanced circuits that meet at a single vertex;

(3) a long barbell, the union of two vertex-disjoint unbalanced circuits with a path  with at least one edge that meets the circuits only at its ends.

\begin{proposition} {\rm (\cite{Bouchet1983})}
\label{signed circuit-flow}
Every balanced circuit or  short barbell has a $2$-NZF and every long barbell has a $3$-NZF where  an edge has flow value $2$ or $-2$ if and only if it belongs to the path connecting the two unbalanced circuits.
\end{proposition}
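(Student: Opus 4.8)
The plan is to derive all three cases from a single construction on an unbalanced circuit, together with the elementary fact that a balanced circuit switches to an ordinary cycle. For the balanced case, since a balanced signed graph is equivalent to an ordinary graph, I would switch so that every edge is positive, orient the circuit cyclically, and assign flow value $1$ to each edge; at every (degree-$2$) vertex one half-edge points in and the other out, so the conservation law reads $-1+1=0$, giving a nowhere-zero $2$-flow. This disposes of type (1).

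For the two barbell cases I would first isolate the following key step. Let $C$ be an unbalanced circuit and $v\in V(C)$. Since $C$ is unbalanced it has an odd number of negative edges, and a switch at a vertex flips the two circuit-edges meeting there, so by a sequence of switches I can reduce to the case where $C$ has exactly one negative edge and, moving that edge around the circuit one switch at a time, arrange it to be incident with $v$. Writing $C=v\,e_1\,v_2\,e_2\cdots v_n\,e_n\,v$ with $e_n$ the unique negative edge, I orient the positive edges consistently as a directed path $v\to v_2\to\cdots\to v_n$ and orient both half-edges of $e_n$ away from their ends. Assigning $f(e_i)=1$ for all $i$, conservation holds at every internal vertex and at $v_n$ (each sees one ``in'' and one ``out'' half-edge), while at $v$ both incident half-edges point out, producing a net excess $\sum_{h\in H_C(v)}\tau(h)f(e_h)=2$. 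Reversing the orientation of $C$ turns this excess into $-2$. Thus $C$ carries a nowhere-zero flow with all values in $\{\pm 1\}$ whose only conservation defect is a prescribed value $\pm 2$ at $v$. (The same computation shows a single unbalanced circuit has no nowhere-zero integer flow, which is why the defect cannot be removed.)

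Now I would assemble the barbells by cancelling defects on edge-disjoint pieces. For a short barbell $C_1\cup C_2$ meeting at $v$, take the flow on $C_1$ with defect $+2$ at $v$ and the flow on $C_2$ with defect $-2$ at $v$; since $C_1$ and $C_2$ share only the vertex $v$, the combined assignment is well defined, has all values $\pm 1$, and the conservation law at $v$ now reads $2+(-2)=0$ while holding everywhere else, yielding a $2$-NZF. For a long barbell, let $P$ be the connecting path from $u\in C_1$ to $w\in C_2$; switch so that $P$ is all positive and orient it as a directed path $w\to u$. Take the flow on $C_1$ with defect $+2$ at $u$ and the flow on $C_2$ with defect $-2$ at $w$, and put flow value $2$ on every edge of $P$. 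At $u$ the path half-edge points in, contributing $-2$, so $u$ balances as $2-2=0$; at $w$ the path half-edge points out, contributing $+2$, so $w$ balances as $-2+2=0$; interior path vertices balance trivially. The result is nowhere-zero with maximum absolute value $2$, i.e.\ a $3$-NZF, and the edges carrying $\pm 2$ are exactly those of $P$, as claimed.

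The computations are elementary, so the only real care needed is bookkeeping in the bidirected model: keeping track of the sign convention $\tau(h)\tau(\hat h)=-\sigma(e_h)$, verifying that the conservation defect of the all-ones flow on an unbalanced circuit is localized at a single vertex and equals exactly $\pm 2$, and checking that placing the unique negative edge at the desired vertex is achievable by switches. Since flow-existence is invariant under the choice of orientation and under switching, the piecewise orientations and flows on the edge-disjoint circuits and path glue into a genuine flow of the whole signed graph, which is what makes the defect-cancellation argument rigorous.
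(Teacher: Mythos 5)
Your construction is correct: the all-ones flow on an unbalanced circuit normalized (by switches) to have its unique negative edge at a prescribed vertex has conservation defect exactly $\pm 2$ at that vertex, and cancelling defects across the cut vertex (short barbell) or along the connecting path with flow value $2$ (long barbell) yields the claimed $2$-NZF and $3$-NZF, with the $\pm 2$ values confined to the path. The paper gives no proof of this proposition --- it is quoted from Bouchet --- and your argument is the standard one, so there is nothing to compare beyond noting that your bookkeeping (switching invariance, the sign convention $\tau(h)\tau(\hat h)=-\sigma(e_h)$, and the independence of the normalizations on the edge-disjoint pieces) all checks out.
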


A signed graph $G$ is {\em flow-admissible} if it admits a $k$-NZF for some  positive integer $k$. Bouchet \cite{Bouchet1983} characterized all flow-admissible signed graphs as follows.

  \begin{proposition} {\rm (\cite{Bouchet1983})}
  \label{flow admissible}
 Let $(G,\sigma)$ be a connected signed graph.  The following three statements are equivalent:

 (1) $(G, \sigma)$ is flow-admissible;

(2)    $(G,\sigma)$  is not equivalent to a signed graph with exactly one negative edge and it has no cut-edge $b$ such that $(G-b,\sigma|_{G-b})$ has a balanced component;

 (3)  every edge in $(G,\sigma)$ is contained in a signed circuit.
 \end{proposition}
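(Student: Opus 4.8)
The plan is to prove the three statements equivalent by the cyclic chain $(1)\Rightarrow(2)\Rightarrow(3)\Rightarrow(1)$. Fix once and for all a global orientation $\tau$ of $(G,\sigma)$; this is harmless because the existence of a nowhere-zero flow is independent of the chosen orientation. The device I will use twice is the following linear identity: summing the conservation law $\sum_{h\in H_G(v)}\tau(h)f(e_h)=0$ of a $\mathbb{Z}$-flow $f$ over all $v$ in a vertex set $U$ produces $\sum_{e}c_e f(e)=0$, where $c_e$ is the sum of $\tau(h)$ over the half-edges of $e$ whose end lies in $U$. If both ends of $e$ lie in $U$, then $c_e=\tau(h)+\tau(\hat h)$, which equals $0$ for a positive edge (where $\tau(\hat h)=-\tau(h)$) and $\pm2$ for a negative edge (where $\tau(\hat h)=\tau(h)$); if exactly one end lies in $U$, then $c_e=\pm1$. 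Thus a careful choice of $U$ will force certain edges to carry zero flow.

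For $(3)\Rightarrow(1)$ I will build a nowhere-zero integer flow by superposition. For each edge $e$ choose a signed circuit $S_e$ containing $e$ and, by Proposition~\ref{signed circuit-flow}, a $\mathbb{Z}$-valued nowhere-zero flow $g_e$ of $S_e$ (a $2$- or $3$-flow), extended by $0$ off $E(S_e)$; each $g_e$ is then a $\mathbb{Z}$-flow of $(G,\tau)$ with $g_e(e)\neq0$. Enumerating the edges as $e_1,\dots,e_m$ and setting $f=\sum_{i=1}^m M^{i}g_{e_i}$ gives a $\mathbb{Z}$-flow for every integer $M$. For a fixed edge $e_j$, let $i^{*}$ be the largest index with $g_{e_{i^{*}}}(e_j)\neq0$, which exists since $g_{e_j}(e_j)\neq0$; the term $M^{i^{*}}g_{e_{i^{*}}}(e_j)$ then dominates all lower-order terms once $M$ is large, so $f(e_j)\neq0$. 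Choosing one $M$ large enough to work for the finitely many edges at once yields a nowhere-zero integer flow, so $(G,\sigma)$ is flow-admissible.

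For $(1)\Rightarrow(2)$ I argue the contrapositive, ruling out each way $(2)$ can fail. If $(G,\sigma)$ is equivalent to a signed graph with a single negative edge $e$, then since flows are preserved under switching I may assume $e$ is the only negative edge; the identity with $U=V(G)$ collapses to $\pm2\,f(e)=0$, so every $\mathbb{Z}$-flow vanishes on $e$ and no flow is nowhere-zero. If instead some cut-edge $b$ leaves a balanced component $B$ of $G-b$, I switch so that $B$ is all-positive; because $\delta_G(V(B))=\{b\}$, the identity with $U=V(B)$ annihilates every (positive, internal) edge of $B$ and reduces to $\pm1\,f(b)=0$, again forcing a zero edge. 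Hence flow-admissibility implies $(2)$.

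The core is $(2)\Rightarrow(3)$, which is purely structural: assuming $(2)$, I must exhibit, for each edge $e$, a signed circuit containing it. If $e$ lies on a balanced circuit, we are done, so suppose every circuit through $e$ is unbalanced. If $e$ is a cut-edge, then by $(2)$ both components of $G-e$ are unbalanced, and joining an unbalanced circuit from each across $e$ gives a long barbell whose connecting path contains $e$. Otherwise $e$ is not a cut-edge, so $G-e$ is connected; if $G-e$ were balanced, switching it to all-positive would leave $e$ as the unique negative edge, contradicting the first clause of $(2)$, so $G-e$ contains an unbalanced circuit $C'$ avoiding $e$, while $e$ lies on an unbalanced circuit $C$. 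When $C$ and $C'$ are vertex-disjoint, a shortest path between them produces a long barbell through $e$; when they meet in exactly one vertex, $C\cup C'$ is a short barbell through $e$. The delicate case, which I expect to be the main obstacle, is when $C$ and $C'$ share at least two vertices. I plan to treat it by an ear-exchange induction: taking an ear $Q\subseteq C'$ between two vertices of $C$ with interior off $C$ splits $C$ into arcs $A_1\ni e$ and $A_2$, and a $\mathbb{Z}_2$ parity count on the resulting theta-graph shows exactly one of $A_1\cup Q$, $A_2\cup Q$ is unbalanced; since $e$ lies on no balanced circuit, $A_1\cup Q$ must be unbalanced, yielding a new unbalanced circuit through $e$. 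The real work is to set up an overlap measure that strictly decreases under such exchanges while preserving both an unbalanced circuit through $e$ and an unbalanced circuit avoiding it, so that the process terminates in the already-settled disjoint or single-vertex cases; everything else in the proof reduces to the linear identity above and Proposition~\ref{signed circuit-flow}.
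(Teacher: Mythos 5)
The paper does not actually prove this proposition---it is quoted from Bouchet \cite{Bouchet1983}---so your attempt can only be judged on its own merits. Your directions $(3)\Rightarrow(1)$ (superposition of circuit flows with weights $M^i$ and a dominant-term estimate) and $(1)\Rightarrow(2)$ (summing the conservation law over a vertex set $U$, using that positive edges inside $U$ contribute $0$ and negative ones $\pm 2$) are both correct and complete. The genuine gap is exactly where you flag it: in $(2)\Rightarrow(3)$, the case where the unbalanced circuit $C$ through $e$ and the unbalanced circuit $C'$ avoiding $e$ share at least two vertices is not proved, only planned. Worse, the exchange you propose does not obviously make progress: replacing $C$ by $A_1\cup Q$ produces a new unbalanced circuit through $e$ that now \emph{contains} the edges of $Q\subseteq C'$, so the natural overlap measures (shared vertices or shared edges with $C'$) can strictly increase, and no decreasing quantity is exhibited. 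Since this overlapping case is the crux of the whole equivalence, the proof is incomplete as it stands.

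The case can be closed without any iteration, using $2$-connectivity instead of ear exchanges. Let $e=xy$ lie on no balanced circuit and not be a cut-edge, and let $B$ be the block of $G$ containing $e$; $B$ is $2$-connected. If $B-e$ were unbalanced, take an unbalanced circuit $C'\subseteq B-e$ and, by Menger's theorem, two vertex-disjoint paths $P_x$ (from $x$) and $P_y$ (from $y$) to $V(C')$, each meeting $C'$ only in its last vertex, with distinct endpoints $a\neq b$; neither path uses $e$. If $R_1,R_2$ denote the two $a$--$b$ arcs of $C'$, then $D_i = e\cup P_x\cup R_i\cup P_y$ ($i=1,2$) are circuits through $e$ and $\sigma(D_1)\sigma(D_2)=\sigma(R_1)\sigma(R_2)=\sigma(C')=-1$, so one of $D_1,D_2$ is balanced, contradicting the choice of $e$. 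Hence $B-e$ is balanced, so every unbalanced circuit of $G-e$ (one exists, as you showed from the first clause of $(2)$) lies in a block of $G$ other than $B$ and therefore meets any circuit inside $B$ in at most one vertex. Taking $C$ to be any circuit of $B$ through $e$ (necessarily unbalanced), one obtains a short barbell through $e$ if $|V(C)\cap V(C')|=1$, and otherwise a long barbell through $e$ via a shortest path joining $V(C)$ to $V(C')$. This theta-graph parity argument is the missing idea; with it, your outline becomes a complete proof.
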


\section{Lemmas}
\label{se:useful-lemma}

In this section, we will present some lemmas that will be used  in the proof of our main result.

Let $H$ be a signed graph and $C$ be a balanced circuit. Define the following operation:

\centerline{$\Phi_2\mbox{-operation}: \mbox{ add a balanced circuit $C$ to $H$ if $|E(C)\setminus E(H)|\leq 2$.}$}

Let $H$ be a subgraph of $G$. We use $\langle H\rangle_2$ to denote the maximal subgraph of $G$ obtained from $H$ via $\Phi_2$-operations. Z\'{y}ka \cite{Zyka1987} proved the following result.

\begin{lemma}{ \rm (Z\'{y}ka \cite{Zyka1987})}\label{2-operation}
Let $(G,\sigma)$ be a signed graph and $H$ be a subgraph of $G$. If  $\langle H\rangle_2=(G, \sigma)$, then $(G,\sigma)$ admits a $\mathbb Z_3$-flow $\phi$ such that $E(G)\setminus E(H)\subseteq \supp(\phi)$.
\end{lemma}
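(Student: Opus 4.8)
The plan is to turn the hypothesis $\langle H\rangle_2=(G,\sigma)$ into an explicit finite sequence of $\Phi_2$-operations and then assemble the desired $\mathbb{Z}_3$-flow as a linear combination of circuit flows, choosing the coefficients in a carefully ordered, greedy fashion.

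First I would record the witnessing sequence: there are balanced circuits $C_1,\dots,C_t$ and subgraphs $H_0=H\subseteq H_1\subseteq\cdots\subseteq H_t=G$ with $H_i=H_{i-1}\cup C_i$ and $|N_i|\le 2$, where $N_i:=E(C_i)\setminus E(H_{i-1})$ is the set of edges that $C_i$ contributes for the first time. Two observations will drive everything: the sets $N_1,\dots,N_t$ are pairwise disjoint and cover $E(G)\setminus E(H)$; and if $e\in N_i$ then $e\notin E(C_k)$ for every $k<i$, since $e\notin E(H_{i-1})\supseteq E(C_k)$. I would then attach to each $C_i$ a nowhere-zero $\mathbb{Z}_3$-flow $\psi_i$, extended by $0$ off $C_i$. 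Such a $\psi_i$ exists because a balanced circuit has a $2$-NZF by Proposition~\ref{signed circuit-flow}, and reducing its $\pm1$ values modulo $3$ yields a nowhere-zero $\mathbb{Z}_3$-flow. Each $\psi_i$ is a $\mathbb{Z}_3$-flow on all of $G$, so any combination $\phi=\sum_i\lambda_i\psi_i$ with $\lambda_i\in\mathbb{Z}_3$ is again a $\mathbb{Z}_3$-flow.

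The heart of the proof is to choose the $\lambda_i$ so that $\phi$ is nonzero on every edge of $S:=E(G)\setminus E(H)$, and I would do this greedily in \emph{decreasing} order $i=t,t-1,\dots,1$. When $\lambda_i$ is to be fixed, the values $\lambda_{i+1},\dots,\lambda_t$ are already determined, and for each $e\in N_i$ the final value $\phi(e)=\lambda_i\psi_i(e)+\sum_{k>i,\,e\in C_k}\lambda_k\psi_k(e)$ depends on $\lambda_i$ only through the term $\lambda_i\psi_i(e)$, because the observation above kills all terms with $k<i$. Since $\psi_i(e)\neq 0$ and $\mathbb{Z}_3$ is a field, each $e\in N_i$ forbids at most one value of $\lambda_i$; as $|N_i|\le 2$ while $|\mathbb{Z}_3|=3$, at least one admissible $\lambda_i$ always remains. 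This choice makes every edge of $N_i$ nonzero in the final $\phi$, and since the $N_i$ cover $S$ it gives $S\subseteq\supp(\phi)$.

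The main obstacle — and the reason for processing the circuits from last to first — is that a single $\Phi_2$-step can cancel flow already placed on edges of $S$, so a naive forward induction genuinely stalls (two new edges of one circuit may forbid both nonzero coefficients simultaneously). Ordering the choices by the step at which each edge is first introduced removes this difficulty: the coefficient of the unique circuit introducing a given edge is the last one that affects it, so that one coefficient alone decides whether the edge is nonzero, and having at most two such edges per step leaves exactly enough room in $\mathbb{Z}_3$. I would finish by confirming that $\phi=\sum_i\lambda_i\psi_i$ is a $\mathbb{Z}_3$-flow and that $\supp(\phi)\supseteq S=E(G)\setminus E(H)$, which is precisely the assertion.
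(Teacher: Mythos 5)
Your proof is correct. Note that the paper itself offers no proof of Lemma~\ref{2-operation} to compare against: it is quoted from Z\'{y}ka's thesis \cite{Zyka1987} and used as a black box, so your argument is a self-contained reconstruction rather than a variant of anything in the text. The reconstruction is sound: the witnessing chain $H_0=H\subseteq\cdots\subseteq H_t=G$ with $|N_i|\le 2$ exists by definition of $\langle H\rangle_2$, each balanced circuit carries a nowhere-zero $\mathbb{Z}_3$-flow by reducing the $2$-flow of Proposition~\ref{signed circuit-flow} modulo $3$ (this is the one point where balancedness is genuinely used), and your key observation --- that an edge of $N_i$ lies in no $C_k$ with $k<i$, so its final value is settled the moment $\lambda_i$ is chosen --- makes each coefficient subject to at most $|N_i|\le 2$ forbidden values out of $3$. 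Your remark about why the backward order matters is also accurate: the naive forward induction (maintain $E(H_i)\setminus E(H)\subseteq\supp(\phi_i)$ and add $\lambda\psi_{i+1}$) can stall, since the new edges force $\lambda\ne 0$ while old edges of $C_{i+1}$ already in the support can forbid both remaining nonzero values; reversing the order of the choices is exactly the right fix. The only cosmetic quibble is that you should say explicitly that $\psi_i$, being supported on a single circuit and conservative over $\mathbb{Z}$, remains conservative modulo $3$ at every vertex of $G$, but that is immediate.
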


\vspace{-0.3cm}

\begin{lemma} \label{2-operation-2}
Let $(G,\sigma)$ be a   triangularly connected signed graph. Let $T$ be an unbalanced triangle if there is one otherwise let $T$ be any balanced triangle. Then $\langle T\rangle_2=(G, \sigma)$ and  $(G,\sigma)$ has a $\Z_3$-flow $\phi$ such that $E_{\phi = 0} \subseteq E(T)$ and for any triangle $T'$, if there are two edges $e_1,e_2\in E(T')$ such that $T'$ is the only triangle containing them, then $\phi(e_1) = \phi(e_2)$.
\end{lemma}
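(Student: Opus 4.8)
The plan is to separate the statement into three parts: (a) the identity $\langle T\rangle_2=(G,\sigma)$; (b) the support bound $E_{\phi=0}\subseteq E(T)$, which will follow from (a) and Lemma~\ref{2-operation}; and (c) the equal-flow condition $\phi(e_1)=\phi(e_2)$, which requires controlling the order in which the $\Phi_2$-operations are performed. Throughout I will use the elementary fact that \emph{in a $2$-edge-connected unbalanced signed graph every edge lies on an unbalanced circuit}: given an edge $e$ and an unbalanced circuit $D$ with $e\notin E(D)$, two edge-disjoint paths from the ends of $e$ to $D$ (which exist by $2$-edge-connectivity) split $D$ into two arcs, exactly one of which is negative; completing $e$ through each arc yields two circuits through $e$ of opposite sign, so one of them is unbalanced.

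For part (a) I would argue by \emph{maximality}. Write $J=\langle T\rangle_2$ and note that $J$ is $2$-edge-connected (it is grown from a triangle by attaching paths of length at most $2$ between existing vertices) and, when $G$ is unbalanced, $J$ is unbalanced because it contains the unbalanced triangle $T$. Suppose $E(J)\neq E(G)$. Using triangular connectivity, pick $g\in E(J)$ and $f\in E(G)\setminus E(J)$ together with a triangle-path $S_1\cdots S_k$ with $g\in E(S_1)$ and $f\in E(S_k)$, and let $S_j$ be the first triangle having an edge outside $J$. Then $S_j$ meets $J$ in an edge $e^*$ (the edge shared with $S_{j-1}\subseteq J$, or $g$ if $j=1$) and has at most two edges outside $J$. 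If $S_j$ is balanced it is added by one $\Phi_2$-operation. If $S_j$ is unbalanced, take an unbalanced circuit $D\subseteq J$ through $e^*$ and replace $e^*$ by the two-edge path of $S_j$ through its apex; since $S_j$ and $D$ are both unbalanced, $D\triangle S_j$ is balanced and again has at most two edges outside $J$, so it too is added by a $\Phi_2$-operation. Either way $J$ is enlarged, contradicting maximality; hence $J=(G,\sigma)$. (If $G$ is balanced every circuit is balanced and only the first, easy, alternative occurs; edges parallel to $E(J)$ are handled identically via the digon they span.) Applying Lemma~\ref{2-operation} with $H=T$ then gives a $\Z_3$-flow $\phi$ with $E(G)\setminus E(T)\subseteq\supp(\phi)$, i.e. $E_{\phi=0}\subseteq E(T)$, which is part (b).

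For part (c), fix a triangle $T'=\{e_0,e_1,e_2\}$ with $e_1=pq$, $e_2=qr$, $e_0=pr$, where $T'$ is the only triangle containing $e_1$ and the only triangle containing $e_2$. The decisive structural observation is that, since no triangle other than $T'$ contains $e_1$ or $e_2$, a triangle-path can meet $T'$ only as an \emph{end}-triangle and only through $e_0$; consequently every edge other than $e_1,e_2$ can be produced by $\Phi_2$-operations that never use $e_1$ or $e_2$. I would therefore build $G$ in two phases: first reach $G-e_1-e_2$ exactly as in part (a), and then add $e_1,e_2$ by a single balanced circuit $C^*$ traversing $e_1$ and $e_2$ consecutively at $q$, namely $C^*=T'$ if $T'$ is balanced, and $C^*=D\triangle T'$ for an unbalanced circuit $D$ through $e_0$ in $G-e_1-e_2$ if $T'$ is unbalanced. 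Constructing $\phi$ as in the proof of Lemma~\ref{2-operation}, i.e. as a sum $\sum_t\lambda_t\psi_t$ of the balanced-circuit flows $\psi_t$ of the circuits used, with coefficients $\lambda_t\in\{1,-1\}$ chosen in reverse order so that each newly introduced edge is kept nonzero, the edges $e_1,e_2$ lie on the single circuit $C^*$ and on no other, so $\phi(e_i)=\lambda_{C^*}\psi_{C^*}(e_i)$ for $i=1,2$. As $\psi_{C^*}$ is a circulation on $C^*$ and $e_1,e_2$ are consecutive on $C^*$, a consistent orientation along $C^*$ gives $\psi_{C^*}(e_1)=\psi_{C^*}(e_2)$, whence $\phi(e_1)=\phi(e_2)$.

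The main obstacle is the rerouting step and its interaction with part (c). Replacing an edge by a two-edge path through a triangle's apex yields a single balanced circuit only when the apex avoids the chosen unbalanced circuit $D$; if the apex lies on $D$, the symmetric difference splits into two circuits meeting at the apex, and one must either re-choose $D$ to avoid the apex (which $2$-edge-connectivity makes possible) or decompose and extract the relevant balanced circuit. This is harmless for part (a), where any enlargement suffices, but delicate for part (c), where $e_1$ and $e_2$ must land on one and the same consecutively traversed circuit. Guaranteeing simultaneously that every such special pair is introduced by its own dedicated circuit, that these circuits are the only ones meeting the pair, and that the reverse coefficient choice still renders every edge outside $T$ nonzero, is where the real care will be required.
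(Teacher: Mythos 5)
Your parts (a) and (b) are in the right spirit (the paper simply asserts $\langle T\rangle_2=(G,\sigma)$ without further argument), but two points there need repair. First, the ``elementary fact'' you invoke requires $2$-vertex-connectivity, not $2$-edge-connectivity: two triangles sharing only a cut vertex form a $2$-edge-connected unbalanced signed graph in which the edges of a balanced triangle lie on no unbalanced circuit, and with merely edge-disjoint paths the two subgraphs you build through $e$ need not be circuits. Second, in the rerouting step the symmetric difference $D\triangle S_j$ can split into two circuits meeting at the apex whose signs multiply to $\sigma(D)\sigma(S_j)=+1$, i.e.\ they are both balanced or \emph{both unbalanced}; in the latter case neither piece is eligible for a $\Phi_2$-operation, so the enlargement does not go through as written even for part (a), and ``re-choosing $D$ to avoid the apex'' is not guaranteed by $2$-connectivity.

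The genuine gap, though, is part (c), and you have named it yourself without closing it. Your plan needs (i) access to the internal structure of the flow produced by Lemma~\ref{2-operation} --- that it is a signed sum of the circuit flows used, with each edge receiving contributions only from circuits containing it --- which is not part of the statement of that lemma; (ii) the guarantee that the dedicated circuit $C^*$ is the \emph{only} circuit of the whole construction meeting $\{e_1,e_2\}$, whereas a circuit added later is free to pass through already-present edges; and (iii) a simultaneous treatment of all triangles $T'$ with a special pair, including the case $T'=T$. The paper avoids all of this with one move: since $e_1=uv$ and $e_2=uw$ lie in no triangle other than $T'$, it splits $u$ into $u_1$ (adjacent only to $v$ and $w$) and $u_2$ (adjacent to the rest of $N_G(u)$), for every such triangle. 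The resulting graph $(G',\sigma)$ is still triangularly connected and still satisfies $\langle T\rangle_2=(G',\sigma)$, so Lemma~\ref{2-operation} applies as a black box; the new vertex $u_1$ has degree $2$, so the conservation law at $u_1$ forces $\phi(e_1)=\phi(e_2)$ automatically, and identifying $u_1$ with $u_2$ returns the desired flow of $(G,\sigma)$. A reduction of this kind is what you should aim for, rather than trying to control the order of the $\Phi_2$-operations and the coefficients inside Z\'yka's construction.
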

\vspace{-0.3cm}
\begin{proof}  If there is a triangle $T'$ containing two  edges $uv,uw$ such that  each is contained in exactly one triangle which is $T'$, then split $u$ into two vertices $u_1$ and $u_2$ such that $u_1$ is adjacent to $v$ and $w$,  and $u_2$ is adjacent to each vertex in $N_G(u)-\{v,w\}$. Then the degree of $u_1$ is $2$. Repeating this operation until  every pair of such edges share a degree $2$-vertex. Denote the resulting graph by $(G', \sigma)$.

 It is clear that $\langle T\rangle_2=(G', \sigma)$. Thus by Lemma~\ref{2-operation}, $(G',\sigma)$ has a $\Z_3$-flow $\phi$ such that $E_{\phi = 0} \subseteq E(T)$.  Then $\phi$ is a desired $\Z_3$-flow of $(G,\sigma)$.
\end{proof}
\vspace{-0.3cm}

\begin{lemma}{\rm (Xu and Zhang \cite{Xu2005})} \label{eulerian-2-flow}
A signed graph $(G,\sigma)$ admits a $2$-NZF if and only if each component of $(G,\sigma)$ is eulerian and has an even number of negative edges.
\end{lemma}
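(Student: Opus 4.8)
\medskip
\noindent
The plan is to recast a $2$-NZF purely as an orientation problem and then treat the two directions separately, working one component at a time. Since a $2$-NZF takes values in $\{-1,1\}$, I first observe that for any edge $e$ with $f(e)=-1$ one may simultaneously negate $f(e)$ and reverse both half-edges of $e$ (that is, replace $\tau(h),\tau(\hat h)$ by $-\tau(h),-\tau(\hat h)$): this preserves the orientation constraint $\tau(h)\tau(\hat h)=-\sigma(e_h)$ and leaves every vertex sum $\sum_{h\in H_G(v)}\tau(h)f(e_h)$ unchanged. Hence I may assume $f\equiv 1$, so that a $2$-NZF is equivalent to an orientation $\tau$ with $\sum_{h\in H_G(v)}\tau(h)=0$ for every $v$; equivalently, at each vertex equally many half-edges point out and in.

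\medskip
\noindent
For \emph{necessity}, fix such a $\tau$. At each vertex the sum of $\deg(v)$ terms from $\{-1,1\}$ vanishes, so $\deg(v)$ is even and each component is eulerian. For the parity of the negative edges I would sum $\tau(h)$ over all half-edges of a single component $K$ in two ways: grouping by vertices gives $0$, while grouping by edges and using $\tau(h)\tau(\hat h)=-\sigma(e)$ shows that a positive edge contributes $\tau(h)+\tau(\hat h)=0$ and a negative edge contributes $\pm 2$. Thus the negative edges of $K$ split into two equinumerous classes (those contributing $+2$ and those contributing $-2$), so $|E_N(K)|$ is even.

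\medskip
\noindent
For \emph{sufficiency}, assume $K$ is connected, eulerian, and has an even number of negative edges, and fix an Eulerian circuit $v_0\xrightarrow{e_1}v_1\xrightarrow{e_2}\cdots\xrightarrow{e_m}v_m=v_0$. I would traverse it carrying a \emph{state} $s_i\in\{-1,1\}$ with $s_0=1$ and $s_i=s_{i-1}\sigma(e_i)$, and orient $e_i$ by assigning its half at $v_{i-1}$ the value $s_{i-1}$ and its half at $v_i$ the value $-s_i$. A one-line check gives $s_{i-1}\cdot(-s_i)=-\sigma(e_i)$, so the orientation constraint holds, while at each passage through a vertex the entering half ($-s_i$) and the next leaving half ($s_i$) cancel, making every vertex balanced. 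The only obstruction to closing the walk is at $v_0=v_m$, where balance needs $s_m=s_0$; since $s_m=s_0\prod_{i}\sigma(e_i)=s_0(-1)^{|E_N(K)|}$, the even-negative-edge hypothesis is exactly what makes the construction consistent.

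\medskip
\noindent
The step I expect to be the crux is the sufficiency construction. For ordinary eulerian graphs one simply orients every edge along the Eulerian circuit and assigns flow $1$; here that naive orientation violates the bidirected constraint at each negative edge, so the orientation must be \emph{twisted} by the running state, and one must verify the local constraint and the global closing-up at once. A little extra care is needed for loops and for the wrap-around at the base vertex, but in each case the pairing of entering and leaving half-edges across successive visits keeps every vertex balanced, and the parity hypothesis guarantees the state returns to its starting value.
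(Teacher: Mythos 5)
Your proof is correct. The paper itself gives no argument for this lemma---it is quoted from Xu and Zhang \cite{Xu2005} as a known result---so there is nothing internal to compare against; your normalization to $f\equiv 1$, the double count of $\sum_h \tau(h)$ over a component for necessity, and the state-carrying orientation of an Eulerian circuit for sufficiency together constitute a complete and standard proof, with the wrap-around parity check at the base vertex handled correctly.
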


The following two lemmas strengthen a result due to Xu and Zhang \cite{Xu2005}.

\begin{lemma}{\rm (DeVos et al. \cite{DLLLZZ})}\label{assign value}
Let $(G,\sigma)$ be a bridgeless signed graph admitting a $\mathbb Z_3$- NZF. Then for any edge $e'\in E(G)$ and for any $i\in \{1,2\}$, $(G,\sigma)$ admits a $3$-NZF $f$ such that $f(e')=i$.
\end{lemma}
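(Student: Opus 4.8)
The plan is to lift the hypothesized $\mathbb Z_3$-NZF to an integer $3$-flow by a minimization argument, after first fixing the prescribed value modulo $3$. Write $\phi$ for the given $\mathbb Z_3$-NZF. Since any nonzero scalar multiple of a $\mathbb Z_3$-flow is again a $\mathbb Z_3$-flow and multiplication by $2$ interchanges the two nonzero elements of $\mathbb Z_3$, after replacing $\phi$ by $2\phi$ if necessary I may assume $\phi(e')\equiv i\pmod 3$. The goal is then to find an \emph{integer} flow $f$ with $f\equiv\phi\pmod 3$, with $f(e')=i$ (the representative in $\{1,2\}$), and with $|f(e)|<3$ for every edge. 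The crucial simplification is that nowhere-zero-ness comes for free: if $f\equiv\phi\pmod 3$ and $\phi$ is nowhere-zero over $\mathbb Z_3$, then $f(e)\not\equiv 0\pmod 3$, hence $f(e)\neq 0$ for every $e$. Thus only the magnitude bound $|f(e)|<3$ must be enforced.

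First I would secure at least one integer flow $f_0$ with $f_0\equiv\phi\pmod 3$ and $f_0(e')=i$; then every integer flow in the same class has the form $f_0+3h$ with $h$ an integer flow, and among all these I would choose $f$ minimizing $\sum_{e}|f(e)|$. Suppose for contradiction that some edge $e_1$ has $|f(e_1)|\geq 3$; note $e_1\neq e'$ since $f(e')=i\in\{1,2\}$. If $\{e',e_1\}$ is a $2$-edge-cut $\delta(U)$, then flow conservation across this cut forces $|f(e_1)|=|f(e')|<3$, a contradiction; hence $e_1$ lies in a signed circuit of $G-e'$ (using bridgelessness and Proposition~\ref{flow admissible}(3) on the relevant component). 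By Proposition~\ref{signed circuit-flow} that signed circuit carries an integer flow $h$ with $h(e')=0$ and $h(e_1)\neq 0$; subtracting a suitable integer multiple of $3h$ from $f$ brings $f(e_1)$ into the range $\{-2,-1,1,2\}$ while leaving $f(e')=i$ unchanged and preserving the congruence class. A check that the total $\sum_e|f(e)|$ strictly decreases contradicts minimality, so $|f(e)|<3$ everywhere and $f$ is the desired $3$-NZF with $f(e')=i$.

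The main obstacle is the very first step, the existence of an integer flow congruent to $\phi$ modulo $3$, since this is exactly where signed graphs diverge from ordinary ones and Tutte's modulo-to-integer equivalence fails. Writing $f_0(e)=\phi(e)$ for the representatives in $\{1,2\}$ leaves a vertex defect $d_v=\sum_{h\in H_G(v)}\tau(h)\phi(e_h)\equiv 0\pmod 3$, and one must solve $\sum_{h}\tau(h)\,m_{e_h}=-d_v/3$ for integers $m_e$; the potential obstruction is $3$-torsion in the cokernel of the bidirected incidence matrix, whose coefficients $\pm 2$ on negative edges are precisely the source of the discrepancy. I would remove it by decomposing $\phi$ along signed circuits in its support and lifting each circuit to an integer flow via Proposition~\ref{signed circuit-flow}, summing these to produce $f_0$. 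A secondary delicate point, already isolated above through the $2$-edge-cut dichotomy, is that the pushing circuit must avoid $e'$ so the prescribed value is never disturbed; the same care is needed to verify the potential $\sum_e|f(e)|$ genuinely decreases when the pushing circuit is a barbell carrying value $2$ on its connecting path, which is the signed phenomenon with no analogue in the ordinary-graph argument.
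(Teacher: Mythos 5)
First, a point of comparison: the paper does not prove this lemma at all --- it is imported from DeVos et al.~\cite{DLLLZZ} --- so there is no internal proof to measure yours against, and your argument must stand on its own. Its first half is sound in outline: a $\mathbb Z_3$-flow of a signed graph does decompose into $\mathbb Z_3$-flows supported on signed circuits inside its support (the incidence matrix represents the signed-graphic matroid over $GF(3)$), and each of these lifts to an integer flow by Proposition~\ref{signed circuit-flow}, so an integer flow $f_0\equiv\phi\pmod 3$ exists. Even here, though, arranging $f_0(e')=i$ exactly rather than merely $f_0(e')\equiv i\pmod 3$ is asserted, not arranged: if the signed circuits through $e'$ available to you meet $e'$ only on the connecting path of a long barbell, you can shift $f_0(e')$ only by multiples of $6$.

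The reduction step is where the genuine gaps lie, and they sit exactly where the difficulty of the theorem lives. (i) The $2$-edge-cut dichotomy is false for signed graphs: summing the conservation law over a shore $U$ with $\delta_G(U)=\{e',e_1\}$ gives $\pm f(e')\pm f(e_1)+2\sum(\pm f(e))=0$, the last sum running over the negative edges inside $U$, so it does not force $|f(e_1)|=|f(e')|$. (ii) Even when $\{e',e_1\}$ is not a $2$-edge-cut, $e_1$ need not lie in a signed circuit of $G-e'$: by Proposition~\ref{flow admissible} that requires the relevant block of $G-e'$ to be flow-admissible, and it may instead be equivalent to a signature with exactly one negative edge. (iii) Most seriously, the ``check that $\sum_e|f(e)|$ strictly decreases'' is the entire content of the theorem, not a detail. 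Subtracting $3h$ with $h$ supported on a signed circuit $D$ changes $f(e_1)$ by $3h(e_1)\in\{\pm 3,\pm 6\}$; if $e_1$ lies on the path of a long barbell the change is $\pm 6$, and $f(e_1)=3$ becomes $-3$, so $|f(e_1)|$ need not drop below $3$ at all; and since $D$ is not chosen conformally with $f$, edges of $D$ carrying small values jump by $3$ or $6$ and can raise the potential by more than any gain at $e_1$. You would need a conformal signed-circuit decomposition of $f$ to control this, and the failure of such naive conformality for bidirected graphs is precisely why the modulo-to-integer equivalence breaks down in the signed setting. Note also that the lemma is false without bridgelessness (a long barbell has a $\mathbb Z_3$-NZF but every integer flow takes even values on its path), so any correct proof must use that hypothesis in an essential way; in your argument it enters only through the flawed step (ii).
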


The next lemma is proved in \cite{LLZZ20202}.  For the purpose of self-conaintment, we include their proof here.
\begin{lemma}
\label{Bridges:mod3-to-3}
Let $(G,\sigma)$ be a signed graph such that there is a path containing all the bridges. Then $(G,\sigma)$ admits a $3$-NZF if $(G,\sigma)$ admits a  $\mathbb Z_3$-NZF.
\end{lemma}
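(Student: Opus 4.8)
The plan is to cut $(G,\sigma)$ open along its bridges, treat each bridgeless lobe with Lemma~\ref{assign value}, and re-glue the pieces into a single $3$-NZF; if $(G,\sigma)$ is already bridgeless this is just Lemma~\ref{assign value}, so assume $B(G)\neq\emptyset$ and write $\phi$ for the given $\mathbb{Z}_3$-NZF. First I would record the structural consequences of the hypothesis. Since $\phi(b)\neq 0$ for every bridge $b$, neither side of $b$ can be balanced (a balanced side could be switched to all-positive, and summing the flow condition over it would force the bridge value to vanish); moreover, a simple path crosses each cut-edge at most once, so a path containing every bridge uses each edge of the bridge-tree exactly once, which forces that tree to be a path. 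Hence the lobes and bridges form a chain $L_0\,b_1\,L_1\,b_2\cdots b_k\,L_k$ in which each interior lobe $L_i$ meets exactly the two bridges $b_i,b_{i+1}$ and each end lobe meets exactly one. Summing the flow condition over a single lobe shows each bridge value is even, and since an end lobe sees only one bridge this forces every $f(b_i)=\pm 2$ in any $3$-NZF; I would also note that an end lobe cannot be a single vertex, for otherwise its pendant bridge would force $\phi(b)=0$.

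The construction then sweeps the chain from $L_0$ to $L_k$, converting the incident bridges of each lobe into an auxiliary edge or loop so that Lemma~\ref{assign value} becomes applicable. At the end lobe $L_0$ (nontrivial and unbalanced) I attach a negative loop $\ell_0$ at the bridge vertex and let it absorb the deficit created by deleting $b_1$; the restriction of $\phi$ extends to a $\mathbb{Z}_3$-NZF of the bridgeless graph $L_0+\ell_0$, so Lemma~\ref{assign value} produces a $3$-NZF with $f(\ell_0)=\pm 1$, fixing $f(b_1)=\pm 2$. For an interior lobe $L_i$ whose left bridge value $f(b_i)$ is already determined, I join its two attachment vertices $y_i,x_{i+1}$ by one auxiliary edge $a_i$, choosing $\sigma(a_i)\in\{\pm1\}$ so that $\phi$ extends to a $\mathbb{Z}_3$-NZF of $L_i+a_i$; this is possible because $\phi(b_i),\phi(b_{i+1})\in\{1,2\}$ are nonzero and the two endpoint conditions collapse to a single sign choice. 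Because every bridge carries the same magnitude $2$, this single edge feeds both endpoints: applying Lemma~\ref{assign value} with $\lvert f(a_i)\rvert=2$, and using that $f\mapsto -f$ preserves $3$-NZFs, I match the contribution of $a_i$ at $y_i$ to the fixed integer value $f(b_i)$; the contribution at $x_{i+1}$ then reads off $f(b_{i+1})=\pm 2$, which is passed to $L_{i+1}$. The last lobe $L_k$ is handled like $L_0$, and its loop carries precisely the sign freedom needed to absorb whatever $f(b_k)$ has arrived, so the sweep closes with no conflict.

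Finally I delete all auxiliary edges and loops and reinstate each bridge with its chosen value $\pm 2$. Conservation at interior lobe vertices is inherited from the $3$-NZF of $L_i+a_i$, conservation at each $y_i,x_{i+1}$ holds because the bridge contribution was matched to the auxiliary one, and a reinstated bridge contributes only at its two endpoints, which lie in different lobes and are each accounted for locally. Every lobe edge is nonzero with absolute value less than $3$, and every bridge equals $\pm 2$, so this is a $3$-NZF of $(G,\sigma)$. I emphasize that nowhere do I need the lobe flows to agree with $\phi$ modulo $3$: $\phi$ is used only to guarantee that each modified lobe possesses some $\mathbb{Z}_3$-NZF, while all bridge values are set at the integer level by local matching.

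The hard part will be the \emph{simultaneous consistency of the integer bridge values across the chain}: Lemma~\ref{assign value} prescribes only one edge, yet each interior lobe touches two bridges, so a direct approach would have to control two values at once. The device that dissolves this difficulty is the forced magnitude $2$ on every bridge, which lets a single auxiliary edge serve both attachment vertices, together with sweeping the chain in one direction so that at each lobe only the already-decided left bridge must be matched—achieved purely by the sign freedom $f\mapsto -f$—while the right bridge is produced as output and the final loop absorbs the last value. The remaining work is orientation and sign bookkeeping and the degenerate cases of a single-vertex interior lobe or of two bridges meeting at one vertex, both of which are settled directly by the conservation condition.
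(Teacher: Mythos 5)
Your proof is correct, and its skeleton---cut the graph at its bridges, repair each piece with an auxiliary loop or edge so that Lemma~\ref{assign value} applies, and reinstate every bridge with value $\pm 2$---is the same as the paper's; the difference lies in how the middle of the chain is decomposed. The paper attaches negative loops only at the two leaf blocks $G_1,G_2$ and treats everything between them as a \emph{single} piece: after deleting $G_1$, $G_2$ and the two extreme bridges, one new edge $e_3=v_1v_2$ is added, and this piece is bridgeless because all remaining bridges lie on the $v_1$--$v_2$ path, so the new edge closes a cycle through all of them at once. Three applications of Lemma~\ref{assign value} (prescribing values $1,1,2$) and one splice then finish; normalizing the given $\mathbb{Z}_3$-NZF to be identically $1$ makes the $\mathbb{Z}_3$-extendability of each modified piece automatic and removes essentially all sign bookkeeping. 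You instead cut at \emph{every} bridge, give each interior lobe its own auxiliary edge joining its two attachment vertices, and restore consistency by a left-to-right sweep using the symmetry $f\mapsto -f$; the ``simultaneous consistency across the chain'' that you identify as the hard part is exactly what the single-edge trick dissolves. Your route costs extra case analysis (trivial interior lobes, auxiliary edges that degenerate to loops, the parity argument forcing bridge values $\pm 2$) but is purely local; the paper's route is shorter, at the price of the one global observation that the path hypothesis makes the entire middle bridgeless after adding a single edge. Both arguments are sound.
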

\vspace{-0.3cm}
\begin{proof}   Let  $(\tau,\phi)$ be a nowhere-zero $\mathbb Z_{3}$-flow of $(G,\sigma)$. We may assume $\phi(e) = 1$ for each edge $e$. By Lemma~\ref{eulerian-2-flow}, we may further assume that $G$ has bridges. Since there is a path containing all the bridges, $G$ has exactly two leaf blocks, say $G_1$ and $G_2$. Let $e_1= u_1v_1$ and $e_2 =u_2v_2$ be the  two bridges such that $u_i \in V(G_i)$ for each $i=1,2$.

For each $i=1,2$,  denote  by $G_i'$ the signed graph obtained from $G_i$ by adding a negative loop  $e_i'$ at $u_i$ such that the two half edges of $e_i'$ are oriented the same as the half edge of $e_i$ incident with $u_i$.   Then both $G_1$ and $G_2$ are bridgeless  and  each admits a $\mathbb Z_3$-NZF. By Lemma \ref{assign value},  $G_i'$ admits a  nowhere-zero $3$-flow  $g_i$ such that $g(e_i) =  1$  for each $i =1,2$

If $e_1$ and $e_2$ are distinct, denote by $G_3'$ the signed graph obtained by deleting $G_1$,  $G_2$, $e_1$ and $e_2$,  and then adding a new edge $e_3=v_1v_2$ where  $v_1v_2$ consists of the  half-edge of $e_1$ incident with $v_1$ with  the same orientation and the half-edge of  $e_2$ incident with $v_2$ with the same orientation.   Then $G_3$ is bridgeless and admits a $\mathbb Z_3$-NZF.  By Lemma \ref{assign value}, $G_3$ admits a nowhere-zero $3$-flow $g_3$ such that $g(e_3) =  2$.

If $e_1 = e_2$, then $e_1$ is the only bridge of $G$ and thus $G= G_1\cup G_2 \cup \{e_1\}$. It is easy to see that one can obtain a $3$-NZF  of $(G,\sigma)$ from $g_1$ and $g_2$ by   deleting the negative loops $e_1', e_2'$ and assigning $e_1$ with the flow value $2$, a contradiction

  If $e_1 \not = e_2$,
    one can merge $g_1,g_2$ and $g_3$ to  obtain a nowhere-zero $3$-flow  of $(G,\sigma)$, a contradiction. This completes the proof of the lemma.
\end{proof}

The following lemma directly follows  from the definition of triangularly connected graphs.
\begin{lemma}\label{3-edge-cut}
Let $G$ be a triangularly connected graph and   $U,W$ be two disjoint vertex set with  $|\delta_G(U,W)|=3$.
   Then either  the three edges in  $\delta_G(U,W)$ share a common end vertex or the three edges induce a path on four vertices. Moreover in the latter case, the four vertices of the path induce a $K_4$ minus one edge.
\end{lemma}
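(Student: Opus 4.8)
The plan is to exploit a single parity fact about how triangles meet the cut, and then read off the structure of the three cut edges. Regard $\{U,W\}$ as a partition of $V(G)$, so that $D:=\delta_G(U,W)=\{e_1,e_2,e_3\}$ is a $3$-edge-cut; since every cycle crosses a cut an even number of times, each triangle of $G$ contains either $0$ or $2$ edges of $D$. The basic observation I would record first is: \emph{if a triangle $T$ contains a cut edge, then $T$ contains exactly two cut edges and these two share a common endpoint.} Indeed, writing the cut edge as $xy$ with $x\in U$, $y\in W$ and letting $z$ be the third vertex of $T$, the two edges $xz,yz$ consist of one cut edge and one non-cut edge according to whether $z\in W$ or $z\in U$; the second cut edge is $yz$ (sharing $y$) or $xz$ (sharing $x$).

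Next I would use triangular connectivity. Because $G$ is triangularly connected and is not a single edge, every edge---in particular each $e_i$---is the initial edge of some triangle-path and hence lies in a triangle. By the observation, each $e_i$ therefore shares an endpoint with another cut edge. Form the intersection graph $L$ on $\{e_1,e_2,e_3\}$, joining two of them when they share an endpoint; then $L$ has no isolated vertex. A graph on three vertices with no isolated vertex is either a path $P_3$ or a triangle $K_3$. If $L=K_3$ but the three edges had no common endpoint, then pairwise sharing at three distinct vertices would force $e_1,e_2,e_3$ to be the three sides of a triangle of $G$, contradicting that a triangle meets $D$ evenly; hence $L=K_3$ forces a common endpoint, which is the first alternative (the three edges share a common end vertex, i.e.\ form a star). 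If $L=P_3$, say $e_1\sim e_2\sim e_3$ with $e_1\not\sim e_3$, let $p$ be the endpoint shared by $e_1,e_2$ and $q$ that shared by $e_2,e_3$; then $p\neq q$, $e_2=pq$, and writing $e_1=r_1p$, $e_3=qr_3$ with $r_1\neq r_3$, the three edges form the path $r_1\,p\,q\,r_3$ on four vertices, the second alternative.

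It remains to prove the ``moreover'' statement for the path case. Orient the sides so that $r_1,q\in U$ and $p,r_3\in W$. The end edge $e_1=r_1p$ lies in a triangle, and by the observation this triangle contains a second cut edge sharing an endpoint with $e_1$; since $r_1$ is incident with no other cut edge, that second edge must be the one sharing $p$, namely $e_2$, so the triangle is exactly $\{r_1,p,q\}$ and in particular $r_1q\in E(G)$. Symmetrically, $e_3=qr_3$ forces its triangle to be $\{p,q,r_3\}$, giving $pr_3\in E(G)$. Finally $r_1r_3\notin E(G)$, for otherwise---as $r_1\in U$ and $r_3\in W$---it would be a fourth edge of $D$. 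Thus among the four vertices $r_1,p,q,r_3$ every pair is adjacent except $r_1r_3$, i.e.\ they induce $K_4$ minus one edge.

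The main obstacle, and the step that drives everything, is the even-crossing observation of the first paragraph: it is what couples the three cut edges to one another (each must share an endpoint with another) and what, in the path case, pins down the two triangles exactly, because the outer endpoints $r_1$ and $r_3$ are private to a single cut edge. I would also be careful about the degenerate possibilities---parallel cut edges, and the need for a nonparallel edge to guarantee each $e_i$ lies in a triangle---but these collapse into the ``common endpoint'' (star) alternative and do not affect the argument.
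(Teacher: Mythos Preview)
The paper does not prove this lemma, stating only that it ``directly follows from the definition of triangularly connected graphs.'' Your argument is a correct and natural full proof of what the authors leave implicit: the parity fact (each triangle meets an edge-cut in $0$ or $2$ edges) forces every cut edge to share an endpoint with another, and your intersection-graph casework on $L$ cleanly yields the star/path dichotomy; the ``moreover'' clause then follows because the outer vertices $r_1,r_3$ of the path are each incident with only one cut edge, pinning down the two triangles through $e_1$ and $e_3$.

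One small caveat worth making explicit: you take $\{U,W\}$ to be a partition of $V(G)$, which is exactly how the lemma is applied later (always as $\delta_G(H_i)=\delta_G(V(H_i),V(G)\setminus V(H_i))$) and is genuinely needed for $D$ to be an edge-cut so that the parity step applies. As literally stated with arbitrary disjoint $U,W$, the ``moreover'' clause can fail; your reading is the intended one.
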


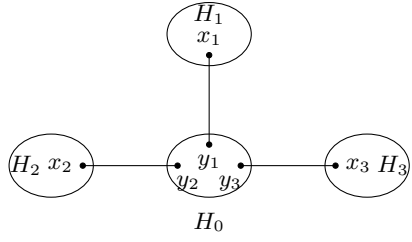
\begin{figure}\begin{center}
\tikzstyle{help lines}+=[dashed]
\begin{tikzpicture}[scale=0.7]
\draw [fill=black] (0,0.4) circle (0.05);
\draw [fill=black] (0,2.1) circle (0.05);
\draw [fill=black] (0.6,0) circle (0.05);
\draw [fill=black] (-0.6,0) circle (0.05);
\draw [fill=black] (2.4,0) circle (0.05);
\draw [fill=black] (-2.4,0) circle (0.05);
\draw (0,0) ellipse (0.8 and 0.6 );
\draw (3,0) ellipse (0.8 and 0.6 );
\draw (-3,0) ellipse (0.8 and 0.6 );
\draw (0,2.5) ellipse (0.8 and 0.6 );
\draw (0,0.4)--(0,2.1) (0.6,0)--(2.4,0) (-0.6,0)--(-2.4,0);
\node[below] at (0,0.4){\footnotesize $y_1$};\node[above] at (0,2.1){\footnotesize $x_1$};
\node[below] at (-0.4,0){\footnotesize $y_2$};\node[below] at (0.4,0){\footnotesize $y_3$};
\node[left] at (-2.4,0){\footnotesize $x_2$};\node[right] at (2.4,0){\footnotesize $x_3$};
\node[left] at (-3,0){\footnotesize $H_2$};\node[right] at (3,0){\footnotesize $H_3$};
\node[above] at (0,2.5){\footnotesize $H_1$};\node[below] at (0,-0.7){\footnotesize $H_0$};
\end{tikzpicture}
\caption{\small  The structure of a graph with three bridges not contained in a path}
\label{FIG: G'}
\end{center}
\end{figure}
\vspace{-0.3cm}
\begin{lemma}
\label{LE:4-edges}
Let $G$ be a triangularly connected graph with $\delta(G)\geq 3$ and $E_0$ be a set of edges of $G$. If $|E_{0}|\leq 4$ and  each component of $G-E_0$ is either an isolated vertex or has minimum degree at least 2, then  in each nontrivial component, there is a path containing all the bridges of the component.
\end{lemma}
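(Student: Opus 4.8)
The plan is to argue by contradiction, the key enabling observation being that triangular connectivity together with $\delta(G)\ge 3$ forces $G$ to be $3$-edge-connected. First I would reformulate the conclusion: in a connected graph the bridges lie on a common path if and only if its \emph{bridge tree} (contract each maximal $2$-edge-connected subgraph, or \emph{blob}, to a point; the edges of the resulting tree are exactly the bridges) is itself a path, i.e.\ has no vertex of degree $\ge 3$. So if the conclusion fails for some nontrivial component $H$ of $G-E_0$, then some blob $B$ of $H$ has at least three incident bridges $b_1,b_2,b_3$ running into pairwise disjoint vertex sets $W_1,W_2,W_3$ with $\delta_H(W_i)=\{b_i\}$ for each $i$.

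The main structural input is that $G$ is $3$-edge-connected. Triangular connectivity implies $G$ is $2$-connected: a triangle is $2$-connected and hence lies on one side of any cut vertex, and two consecutive triangles of a triangle-path share an edge, so no triangle-path can cross a cut vertex; thus edges on opposite sides of a cut vertex could never be joined by a triangle-path. Moreover every edge lies in a triangle, so for any $2$-edge-cut $\{f_1,f_2\}$ the triangle through $f_1$ forces $f_2$ to share an endpoint $u$ with $f_1$; but then $u$ is either a degree-$2$ vertex or a cut vertex, contradicting $\delta(G)\ge 3$ and $2$-connectivity. Hence $G$ has no $2$-edge-cut. Writing $S_i=E_0\cap\delta_G(W_i)$, so that $\delta_G(W_i)=\{b_i\}\cup S_i$, $3$-edge-connectivity gives $|\delta_G(W_i)|\ge 3$ and therefore $|S_i|\ge 2$.

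Now I would combine counting with Lemma~\ref{3-edge-cut}. Since the $W_i$ are disjoint, each edge of $E_0$ lies in at most two of the $S_i$, so $6\le\sum_i|S_i|\le 2|E_0|\le 8$; this bounds the number of branches and, splitting $E_0$ into edges running between two branches and edges running from a branch to the rest, forces at least two \emph{between-branch} edges. For every $i$ with $|S_i|=2$ the cut $\delta_G(W_i)$ has exactly three edges, so Lemma~\ref{3-edge-cut} applies. The ``common vertex'' alternative is impossible: three cut edges meeting at a vertex $u$ make $u$ a cut vertex of $G$ when $u$'s side has at least two vertices, and force $\deg_H=1$ at a vertex of a nontrivial component of $H$ when $W_i=\{u\}$ --- both contradictions. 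The ``$K_4$ minus an edge'' alternative (a path on four vertices) cannot contain two parallel cut edges and forces two of the three cut edges to share an endpoint \emph{outside} $W_i$; in particular it is incompatible with the three cut edges pointing into three distinct regions. This already eliminates the extremal configuration in which the between-branch edges form a triangle on $\{W_1,W_2,W_3\}$, since there each $\delta_G(W_i)$ points into the three distinct regions $B,W_j,W_\ell$.

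The hard part, and the bulk of the work, is the residual family of $K_4$-minus-an-edge configurations in which two cut edges of some $\delta_G(W_i)$ share a common endpoint $v$ outside $W_i$. Here I would use the explicit $K_4-e$ structure: sliding $v$ across the cut (replacing $W_i$ by $W_i\cup\{v\}$) converts the two shared edges into internal edges, and the remaining $K_4-e$ edges together with $3$-edge-connectivity produce either a forbidden $2$-edge-cut (for instance when $\deg_G(v)=3$) or a further small cut to which the same dichotomy reapplies, while the global budget $|E_0|\le 4$ and the forced between-branch edges prevent this process from being absorbed. Running this bookkeeping through the finitely many admissible degree sequences $(|S_1|,\dots,|S_k|)$ with $k\in\{3,4\}$ should yield a contradiction in every case, so the bridge tree of each nontrivial component is a path, which is the assertion of the lemma.
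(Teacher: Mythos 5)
Your overall strategy is the same as the paper's: locate a branch vertex of the bridge structure with disjoint branches $W_1,W_2,W_3$ (the paper's $H_1,H_2,H_3$ hanging off $H_0$), show that triangular connectivity together with $\delta(G)\ge 3$ forces $G$ to be $3$-edge-connected, count the edges of $E_0$ against the cuts $\delta_G(W_i)$ to conclude that most of these cuts have exactly three edges, and then exploit the dichotomy of Lemma~\ref{3-edge-cut} to manufacture a cut vertex. The preliminary parts are sound --- your derivation of $3$-edge-connectivity is in fact more detailed than the paper's --- and your elimination of the ``common end vertex'' alternative, and of any configuration in which some $\delta_G(W_i)$ sends its three edges into three distinct regions, is correct.

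The gap is exactly where you yourself locate ``the bulk of the work'': the residual $K_4$-minus-an-edge configurations are never actually analysed. The sentence ``running this bookkeeping through the finitely many admissible degree sequences \dots\ should yield a contradiction in every case'' is a statement of intent, not an argument, and the mechanism you sketch (sliding a vertex across the cut, reapplying the dichotomy to ``a further small cut'') neither enumerates the cases nor shows that the process terminates in a contradiction. This is precisely the content of the paper's Claims~2 and~3 and its final step: one first shows that some branch has a $3$-edge cut not entirely directed into the central part, then pins down $\delta_G(H_1,H_3)=\emptyset$, $|\delta_G(H_3)|=3$ and $\delta_G(H_2,H_0)=\{x_2y_2\}$, and finally uses the explicit $K_4-e$ adjacencies coming from $\delta_G(H_1)$ and $\delta_G(H_3)$ to force all three edges of $\delta_G(H_2)$ through the single vertex $x_2$, which is then a cut vertex. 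A concrete configuration that survives all of your stated criteria is: two edges of $E_0$ joining $W_1$ to $W_2$ and the other two joining $W_3$ to the central blob; here no $\delta_G(W_i)$ meets three distinct regions, so one must chase the $K_4-e$ structure of each cut (as the paper does) to reach a contradiction. Until that case analysis is written out, the proof is incomplete.
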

\vspace{-0.3cm}
\begin{proof}
Suppose to the contrary that $G'=G-E_{0}$  has a component say $H$ that contains three bridges, say $x_1y_1,x_2y_2,x_3y_3$, which don't belong to a path (see Figure \ref{FIG: G'}). Deleting these three edges, we will get four components and denote the component containing $x_i$ by $H_i$ for $i=1,2,3$ and denote the component containing $y_1,y_2,y_3$ by $H_0$.

Since $G$ is triangularly connected and $\delta(G) \geq 3$,  $G$ has no cut-vertex and has no $2$-edge-cut.  Thus $G$ is $3$-edge connected.
Since the minimum degree of each nontrivial component of $G-E_0$ is at least $2$, $|V(H_i)|\geq 2$ for each $i=1,2,3$.

\begin{claim}
$G'$ is connected.
\end{claim}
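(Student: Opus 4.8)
The plan is to prove the claim by contradiction through an edge-budget (double-counting) argument, whose only real inputs are the $3$-edge-connectivity of $G$ (already established above) and the bound $|E_0|\le 4$.

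First I would record the cut sizes around the three outer pieces. By the construction of $H_0,H_1,H_2,H_3$, the bridge $x_iy_i$ is the unique edge of $H$ joining $V(H_i)$ to the rest of $H$, so in $G'=G-E_0$ we have $\delta_{G'}(V(H_i))=\{x_iy_i\}$ for each $i\in\{1,2,3\}$. Since $G$ is $3$-edge-connected, $|\delta_G(V(H_i))|\ge 3$, and as $x_iy_i\notin E_0$ this forces $|\delta_G(V(H_i))\cap E_0|\ge 2$. Summing over $i=1,2,3$ gives $\sum_{i=1}^{3}|\delta_G(V(H_i))\cap E_0|\ge 6$.

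Next I would suppose for contradiction that $G'$ is disconnected and choose a component $K\ne H$. Because $K$ is a full component of $G-E_0$, no edge inside $K$ lies in $E_0$ while every edge leaving $V(K)$ does; thus $\delta_G(V(K))\subseteq E_0$, and $3$-edge-connectivity gives $t:=|\delta_G(V(K))|\ge 3$. The decisive point is that $V(H_1),V(H_2),V(H_3)$ are pairwise disjoint and disjoint from $V(K)$, so in the sum above each edge of $E_0$ is counted at most twice, and each of the $t$ edges meeting $V(K)$ is counted at most once (one of its ends already lies in $V(K)$, outside $\bigcup_iV(H_i)$). Hence
\[
6\le \sum_{i=1}^{3}|\delta_G(V(H_i))\cap E_0|\le t\cdot 1+(|E_0|-t)\cdot 2 = 2|E_0|-t\le 8-3=5,
\]
a contradiction, so $G'$ is connected.

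The arithmetic is routine; the step requiring care is the bookkeeping in the displayed inequality, in particular the observation that an $E_0$-edge meeting $V(K)$ can belong to $\delta_G(V(H_i))$ for at most one index $i$. This is exactly what makes the three ``expensive'' cuts around $H_1,H_2,H_3$ unaffordable once three edges of the four-edge budget have already been committed to separating $K$. Note that neither Lemma~\ref{3-edge-cut} nor the minimum-degree hypothesis is needed for this particular claim; they will enter later in the proof of the lemma.
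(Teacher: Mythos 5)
Your proof is correct and follows essentially the same route as the paper's: both are double-counting arguments that play the $3$-edge-connectivity lower bound on the cuts around pairwise disjoint vertex sets against the budget $|E_0|\le 4$. The only cosmetic difference is that the paper sums $|\delta_G(\cdot)|$ over the five disjoint sets $V(G_2),V(H_0),V(H_1),V(H_2),V(H_3)$ and subtracts the doubly counted bridges to get $2|E_0|\ge 9$, whereas you restrict the count to $H_1,H_2,H_3$ and book the edges meeting the second component separately to reach $6\le 5$; the substance is identical.
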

\noindent\emph{Proof.} Suppose to the contrary that $G_1$ and $G_2$ are two components of $G'$, where $H_i\subseteq G_1$ for each $i=1,2,3$. This implies that $2|E_{0}|=|\delta_G(G_2)|+\sum_{i=0}^3|\delta_G(V(H_i))|-3\times 2\geq 9$. It contradicts  the hypothesis  $|E_0|\leq 4$. $\square$

\begin{claim}
\label{H-1}
There exists an integer $i\in \{1,2,3\}$  such that $\delta_G(H_i)=3$ and for any $H_j$ with $|\delta_G(H_j)| =3$,  $\delta_G(H_j)\neq\delta_G(H_j,H_0).$
\end{claim}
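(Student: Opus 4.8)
The plan is to prove the two assertions of the claim separately, using only edge counting against the hypothesis $|E_0|\le 4$ together with the $3$-edge-connectivity of $G$ already established. Throughout, I would write $a_i = |E_0\cap \delta_G(H_i)|$ for $i\in\{1,2,3\}$. Since the only edge of $G'=G-E_0$ leaving $H_i$ is the bridge $x_iy_i$, every edge of $\delta_G(H_i)$ other than this bridge lies in $E_0$, so $|\delta_G(H_i)| = 1 + a_i$; as $G$ is $3$-edge-connected, this forces $a_i\ge 2$ for every $i$.

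For the existence part, the idea is that each edge of $E_0$ has two ends, each lying in at most one of the pairwise disjoint sets $V(H_1),V(H_2),V(H_3)$, so it is counted in at most two of the $a_i$. Hence $\sum_{i=1}^3 a_i \le 2|E_0|\le 8$. If every $a_i$ were at least $3$, this sum would be at least $9$, a contradiction; therefore $a_i=2$, i.e. $|\delta_G(H_i)|=3$, for at least one index $i$.

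For the second part, I would fix an arbitrary $j$ with $|\delta_G(H_j)|=3$ (equivalently $a_j=2$) and suppose for contradiction that $\delta_G(H_j)=\delta_G(H_j,H_0)$. Then both $E_0$-edges incident with $H_j$ run to $H_0$, and no $E_0$-edge joins $H_j$ to $H_k$ or $H_l$, where $\{k,l\}=\{1,2,3\}\setminus\{j\}$; in particular no further edge of $E_0$ meets $H_j$ (else $a_j\ge 3$). These two edges already consume a budget of $2$ from $E_0$, leaving at most two further edges to supply the demands $a_k\ge 2$ and $a_l\ge 2$. A remaining edge touching $H_0$ contributes to at most one of $a_k,a_l$, whereas an edge joining $H_k$ and $H_l$ contributes to both; since two edges must meet a combined demand of at least $4$, both remaining edges are forced to have one end in $H_k$ and the other in $H_l$. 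Consequently $|E_0|=4$, with exactly two $H_k$--$H_l$ edges and two $H_j$--$H_0$ edges, and $a_k=a_l=2$.

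The crux is then to convert this rigid configuration into a forbidden small cut. With the edges of $E_0$ now completely described, the only edges of $G$ leaving $V(H_k)\cup V(H_l)$ are the two bridges $x_ky_k$ and $x_ly_l$: the two $H_k$--$H_l$ edges are internal, the two $H_j$--$H_0$ edges avoid $H_k\cup H_l$, the bridge $x_jy_j$ does not meet $H_k\cup H_l$, and in $G'$ there are no $H_k$--$H_l$ edges at all (they were separated by deleting the three bridges). Thus $\delta_G(V(H_k)\cup V(H_l))$ is a $2$-edge-cut separating at least four vertices from the nonempty set $V(H_0)\cup V(H_j)$, contradicting the $3$-edge-connectivity of $G$. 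I expect this final bookkeeping---verifying that nothing but the two bridges escapes $H_k\cup H_l$---to be the only delicate point, since it is where global connectivity is brought to bear; everything preceding it is elementary counting, and notably the structural Lemma~\ref{3-edge-cut} is not needed for this claim.
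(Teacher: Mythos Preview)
Your proof is correct and follows essentially the same line as the paper's: a counting argument against $|E_0|\le 4$ shows some $|\delta_G(H_i)|=3$, and assuming $\delta_G(H_j)=\delta_G(H_j,H_0)$ forces the two remaining $E_0$-edges to run between $H_k$ and $H_l$, whence $\{x_ky_k,x_ly_l\}$ is a $2$-edge-cut contradicting $3$-edge-connectivity. Your introduction of $a_i=|E_0\cap\delta_G(H_i)|$ streamlines the bookkeeping, but the structure of the argument is the same as the paper's.
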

\noindent\emph{Proof.} We first prove that there exists an $i\in \{1,2,3\}$  such that $\delta_G(H_i)=3$. Suppose to the contrary that $\delta(H_j)\geq 4$ for each $j\in \{1,2,3\}$. It follows that $2|E_{0}|=\sum_{j=1}^3|\delta_G(H_j)|+|\delta_G (H_0)|-3\times 2\geq  3\times 4 + 3 -6=9$, a contradiction.

Without loss of generality,  assume that $|\delta_G(H_1)|=3$.   Suppose to the contrary that $\delta_G(H_1)=\delta_G(H_1,H_0)$. It follows that $|\delta_G(H_2)|=|\delta_G(H_3)|=3$, otherwise $2|E_{0}|=\sum_{j=1}^3|\delta_G(H_j)|+|\delta_G (H_0)|-3\times 2\geq  9$, a contradiction. If $|\delta_G(H_2)\cap \delta_G(H_3)|\leq 1$, then $|E_{0}|\geq \sum_{j=1}^3(|\delta_G(H_j)|-1)-1\geq 5$, a contradiction. Thus $|\delta_G(H_2)\cap \delta_G(H_3)|=2$. This implies that $\{x_2y_2,x_3y_3\}$ is a 2-edge-cut of $G$. It contradicts that $G$ is 3-edge-connected.  Therefore  $\delta_G(H_1) \not =\delta_G(H_1,H_0)$. This completes the proof of this claim.
$\square$

By Claim~\ref{H-1},  in the following without loss of generality we assume that $|\delta_G(H_1)|=3$ and  $\delta_G(H_1,H_2) \not = \emptyset$.

\begin{claim}
\label{H1}
$\delta_G(H_1,H_3) = \emptyset$, $|\delta_G(H_3)|=3$, and $\delta_G(H_2,H_0)=\{x_2y_2\}$.
\end{claim}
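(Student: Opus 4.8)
The plan is to establish the three assertions of the claim in two stages: first I would reduce the entire claim to the single statement $\delta_G(H_1,H_3)=\emptyset$ by a short counting argument, and then I would derive this last equality from the structure of the $3$-edge-cut $\delta_G(H_1)$ via Lemma~\ref{3-edge-cut}.

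For the counting stage I would, for each unordered pair of the four pieces $H_0,H_1,H_2,H_3$, let $n_{ij}$ denote the number of edges of $E_0$ joining $H_i$ and $H_j$. Since inside $G-E_0$ the only edge leaving a piece $H_i$ with $i\in\{1,2,3\}$ is its bridge $x_iy_i$, we have $|\delta_G(H_i)|=1+\sum_{j\neq i}n_{ij}$. The hypotheses then read $n_{10}+n_{12}+n_{13}=2$ (from $|\delta_G(H_1)|=3$) and $n_{12}\geq 1$ (from $\delta_G(H_1,H_2)\neq\emptyset$), while $3$-edge-connectivity gives $|\delta_G(H_i)|\geq 3$, hence $n_{30}+n_{13}+n_{23}\geq 2$. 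Granting for the moment that $n_{13}=0$, the number of $E_0$-edges running between distinct pieces is $2+n_{20}+n_{23}+n_{30}\leq|E_0|\leq 4$, so $n_{20}+n_{23}+n_{30}\leq 2$; combined with $n_{30}+n_{23}\geq 2$ this forces $n_{20}=0$ and $n_{23}+n_{30}=2$. These are exactly $\delta_G(H_2,H_0)=\{x_2y_2\}$ and $|\delta_G(H_3)|=3$, so the whole claim follows once $\delta_G(H_1,H_3)=\emptyset$ is proved.

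To show $n_{13}=0$ I would argue by contradiction, analyzing the three-edge cut $\delta_G(H_1)$ through Lemma~\ref{3-edge-cut} with $U=V(H_1)$ and $W=V(G)\setminus V(H_1)$. If its three edges shared a common end vertex $v$, then deleting $v$ would sever $V(H_1)$ from the rest of $G$; since $|V(H_1)|\geq 2$ and the other side contains the nontrivial pieces $H_2,H_3$, the vertex $v$ would be a cut-vertex (whether $v$ lies inside or outside $H_1$), contradicting that the triangularly connected graph $G$ with $\delta(G)\geq 3$ has none. Hence, by Lemma~\ref{3-edge-cut}, the three edges of $\delta_G(H_1)$ must form a path on four vertices, so the two vertices of this path lying outside $H_1$ are the \emph{only} endpoints of $\delta_G(H_1)$ in $V(G)\setminus V(H_1)$. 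On the other hand, if $n_{13}\geq 1$, then $n_{10}+n_{12}+n_{13}=2$ together with $n_{12}\geq 1$ forces $n_{12}=n_{13}=1$ and $n_{10}=0$, so the three edges of $\delta_G(H_1)$ reach the pairwise-disjoint vertex sets $H_0$, $H_2$, $H_3$ and thus have three distinct outside endpoints—impossible when only two are available. Therefore $n_{13}=0$, and the claim follows.

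I expect the only delicate point to be the application of Lemma~\ref{3-edge-cut}: one must carefully rule out the common-end-vertex alternative in both sub-cases (the common vertex lying in $H_1$ or outside it), each time by producing a cut-vertex from the assumed no-cut-vertex graph, and then verify in the path-on-four-vertices alternative that the side outside $H_1$ really contributes only two vertices to absorb all three cut edges. Once this structural dichotomy is secured, the conflict with ``three distinct pieces'' and the closing count are routine.
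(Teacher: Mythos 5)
Your proposal is correct and follows essentially the same route as the paper: a contradiction from $\delta_G(H_1,H_3)\neq\emptyset$ obtained by applying Lemma~\ref{3-edge-cut} to the $3$-edge cut $\delta_G(H_1)$ (the three outside endpoints lying in the disjoint sets $H_0,H_2,H_3$ being incompatible with both alternatives of the lemma, one of which yields a cut-vertex), followed by counting against $|E_0|\leq 4$ to pin down $|\delta_G(H_3)|=3$ and $\delta_G(H_2,H_0)=\{x_2y_2\}$. The only differences are presentational: you make the edge count explicit via the $n_{ij}$ and dispose of the two cases of Lemma~\ref{3-edge-cut} in the opposite order.
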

\noindent\emph{Proof.}   Suppose to the contrary that   $\delta_G(H_1,H_3) \not = \emptyset$.  Since $\delta_G(H_1,H_2)  \not= \emptyset$, by  Claim~\ref{H-1}, we have  $|\delta_G(H_1,H_i)| = 1$ for each $i  =0, 2, 3$.  Since $\delta_G(H_1)$ is an edge cut with  $|\delta_G(H_1)|=3$ and clearly the three edges in $\delta_G(H_1)$ don't induce a path,  by Lemma~\ref{3-edge-cut},  the three edges share a common end vertex which is $x_1$. Since $|V(H_1)| \geq 2$,  we have that $x_1$ is a cut-vertex, a contradiction. This proves  $\delta_G(H_1,H_3) = \emptyset$.

Since $\delta_G(H_1,H_3) = \emptyset$ and $|E_0| \leq 4$, we have $3 \leq |\delta_G(H_3)| \leq 4 -2 +1=3$. Thus $|\delta_G(H_3)|= 3$.

Since  $(\delta_G(H_1)\cup \delta_G(H_3))\setminus \{x_1y_1, x_3y_3\} \subseteq E_0$ and $|(\delta_G(H_1)\cup \delta_G(H_3))\setminus \{x_1y_1, x_3y_3\} | = 4$, we have  $(\delta_G(H_1)\cup \delta_G(H_3))\setminus \{x_1y_1, x_3y_3\} = E_0$. Therefore $\delta_G(H_2,H_0) = \{x_2y_2\}$.
$\square$

\medskip \noindent
{\bf The final step.}  By Claims~\ref{H-1} and \ref{H1},  there is an edge $u_1u_2 \in \delta_G(H_1,H_2)$ where $u_1\in V(H_1)$ and $u_1 \not = x_1$. By Lemma~\ref{3-edge-cut}, $u_2$ and $y_1$ are adjacent. Since $\delta_G(H_2,H_0)=\{x_2y_2\}$ by Claim \ref{H1},  we have $u_2 = x_2$ and $y_1=y_2$.  Similarly there is an edge $v_3v_2 \in \delta_G(H_3,H_2)$ where $v_3\in V(H_3)$ and $v_3 \not = x_3$  and $v_2 = x_2$.   By Lemma \ref{3-edge-cut},  all the edges in $\delta_G(H_2)$ share a common end vertex $x_2$. Since $|V(H_2)|\geq 2$, $x_2$ is a cut-vertex, a contradiction to the fact that $G$  has no cut-vertex. This contradiction completes the proof of the lemma.
\end{proof}

The following is a corollary of Lemmas~\ref{Bridges:mod3-to-3} and \ref{LE:4-edges}.
\begin{lemma}\label{4-edges:mod3-to-3}
Let $(G,\sigma)$ be a  triangularly connected signed graph and $\phi$ be a $\mathbb Z_3$-flow of $(G,\sigma)$ with $|E_{\phi=0}|\leq 4$, then $(G,\sigma)$ admits a $3$-flow $f$ with $\supp(f)=\supp(\phi)$.
\end{lemma}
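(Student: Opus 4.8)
The plan is to reduce the statement to Lemma~\ref{Bridges:mod3-to-3}, whose hypothesis requires a single path containing all the bridges. First I would pass from $(G,\sigma)$ to the spanning subgraph $G^* = \supp(\phi)$, that is, delete the edges in $E_{\phi=0}$. The flow $\phi$ restricts to a nowhere-zero $\mathbb{Z}_3$-flow on $G^*$, so the only remaining task is to upgrade this $\mathbb{Z}_3$-NZF of $G^*$ to an honest integer $3$-NZF $f$; then $f$, extended by $0$ on $E_{\phi=0}$, is the desired $3$-flow of $(G,\sigma)$ with $\supp(f)=\supp(\phi)$. By Lemma~\ref{Bridges:mod3-to-3} applied component-by-component, it suffices to show that in every nontrivial component of $G^*$ there is a path containing all the bridges of that component.

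The natural obstacle is that $G^*$ need not have minimum degree at least $2$, nor need $|E_{\phi=0}|\le 4$ hold literally in the form Lemma~\ref{LE:4-edges} wants; so the second step is to reconcile the data with that lemma's hypotheses. The set $E_0 := E_{\phi=0}$ satisfies $|E_0|\le 4$ by assumption, and I would use the flow-conservation of $\phi$ to control degrees: at any vertex $v$ of $G$ that is incident with exactly one edge outside $E_0$, the single surviving edge would be forced to carry flow $0$ modulo $3$, contradicting that $\phi$ is nowhere-zero on $\supp(\phi)$. Hence every vertex of $G^*$ that is not isolated in $G^*$ has degree at least $2$ in $G^*$; equivalently, each component of $G-E_0$ is either an isolated vertex or has minimum degree at least~$2$, which is exactly the hypothesis of Lemma~\ref{LE:4-edges}. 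I also need $\delta(G)\ge 3$: since $(G,\sigma)$ is triangularly connected it is $2$-edge-connected (indeed every edge lies in a triangle), and a vertex of degree $\le 2$ would be problematic; I expect that a vertex of degree $1$ or $2$ can be handled directly (a degree-$1$ vertex forces that edge to $0$, and a degree-$2$ vertex in a triangularly connected graph is suppressible / already routed through a path), or one reduces to the minimum-degree-$3$ case by first smoothing such vertices. This degree bookkeeping is the step I expect to be the main obstacle, since Lemma~\ref{LE:4-edges} is stated under $\delta(G)\ge 3$ and the reduction to that case must be made carefully so as not to disturb the bridge structure.

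Granting the hypotheses of Lemma~\ref{LE:4-edges}, that lemma yields directly that each nontrivial component of $G^*=G-E_0$ contains a path through all its bridges. Applying Lemma~\ref{Bridges:mod3-to-3} to each such component (and noting that isolated vertices and components already handled by Lemma~\ref{eulerian-2-flow} cause no trouble) converts the component's $\mathbb{Z}_3$-NZF into an integer $3$-NZF. Finally I would assemble these component flows into a single integer $3$-flow $f$ on $G^*$; since the components are edge-disjoint and each flow is nowhere-zero on its component, $f$ is nowhere-zero on $E(G^*)=\supp(\phi)$, and setting $f(e)=0$ for $e\in E_0$ gives a $3$-flow of $(G,\sigma)$ with $\supp(f)=\supp(\phi)$, completing the proof.
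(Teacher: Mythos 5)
Your proposal is correct and follows exactly the route the paper intends: the paper gives no written proof, stating only that the lemma ``is a corollary of Lemmas~\ref{Bridges:mod3-to-3} and \ref{LE:4-edges},'' and your argument supplies precisely the missing details (restricting $\phi$ to $\supp(\phi)$, using flow conservation to see that no vertex of $G-E_{\phi=0}$ has degree $1$, invoking Lemma~\ref{LE:4-edges} for the bridge-path and Lemma~\ref{Bridges:mod3-to-3} componentwise). Your remark about the hypothesis $\delta(G)\ge 3$ in Lemma~\ref{LE:4-edges} is a fair point that the paper glosses over, but it is easily dispatched since in a triangularly connected graph a degree-$2$ vertex lies in a unique triangle and can be smoothed without affecting the bridge structure.
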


\vspace{-0.3cm}

\begin{lemma}\label{f-extension}
Let $k\geq 3$ be an integer and $C$ be a balanced circuit of  $(G,\sigma)$. Let $g$ be a $2$-flow of $(G,\sigma)$ with $\supp(g) = E(C)$ and  $f_1$ be  an integer $k$-flow of $(G, \sigma)$ such that  $|\supp(f_1) \cap E(C)|\leq k-2$ and $|f_1(e)| \leq \frac{k}{2}$ for each $e\in E(C)$. Then there is an $\alpha \in \{\pm 1, \pm 2, \cdots,\pm \lfloor\frac{k}{2}\rfloor\}$  such that $f_2 = f_1- \alpha g$ is  an integer $k$-flow with $\supp(f_2) = \supp(f_1)\cup E(C)$.
\end{lemma}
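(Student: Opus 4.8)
The plan is to use the fact that $f_2=f_1-\alpha g$ differs from $f_1$ only on $E(C)$, which reduces the whole statement to a counting argument on the circuit. First I would note that, since $C$ is balanced and $g$ is a $2$-flow supported exactly on $E(C)$, Proposition~\ref{signed circuit-flow} gives $g(e)\in\{-1,1\}$ for every $e\in E(C)$. For any integer $\alpha$, $f_2=f_1-\alpha g$ is again a $\Z$-flow (a $\Z$-linear combination of $\Z$-flows), and $f_2(e)=f_1(e)$ for all $e\notin E(C)$. Hence off $E(C)$ the bound $|f_2(e)|<k$ and the support are inherited from $f_1$, and it remains only to choose $\alpha\in\{\pm1,\dots,\pm\lfloor k/2\rfloor\}$ so that on $E(C)$ we have $f_2(e)\neq0$ and $|f_2(e)|<k$; this immediately yields $\supp(f_2)=\supp(f_1)\cup E(C)$.

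The technical heart is a clean reformulation. Setting $\beta(e):=g(e)f_1(e)$ and using $g(e)^2=1$, one gets $f_2(e)=g(e)(\beta(e)-\alpha)$, so that $|f_2(e)|=|\beta(e)-\alpha|$ and $f_2(e)=0$ precisely when $\alpha=\beta(e)$; moreover $|\beta(e)|=|f_1(e)|\le k/2$ and $\beta(e)=0$ iff $f_1(e)=0$. The two requirements on $\alpha$ thus become purely numerical: ($f_2$ nowhere zero on $C$) $\alpha\notin B$, where $B:=\{\beta(e):e\in\supp(f_1)\cap E(C)\}$ has $|B|\le k-2$ by hypothesis (since $\alpha\neq0$ already handles the edges with $f_1(e)=0$); and ($k$-bound) $|\beta(e)-\alpha|<k$ for every $e\in E(C)$.

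I would finish by counting the admissible $\alpha$ in $A:=\{\pm1,\dots,\pm\lfloor k/2\rfloor\}$, splitting on the parity of $k$. If $k$ is odd, then $|\beta(e)-\alpha|\le\tfrac{k-1}{2}+\tfrac{k-1}{2}=k-1$, so the $k$-bound is automatic; since $|A|=k-1>k-2\ge|B|$, any $\alpha\in A\setminus B$ works. If $k$ is even, the bound can fail only at $|\beta(e)-\alpha|=k$, which forces $|\beta(e)|=|\alpha|=k/2$ with opposite signs, i.e.\ it is an issue only at the endpoints $\pm k/2$. When no $\beta(e)$ equals $\pm k/2$, I would take $\alpha=k/2\notin B$, for which $|\beta(e)-\alpha|\le(k/2-1)+k/2=k-1$ for all $e$; when some $\beta(e)=\pm k/2$, that value consumes one element of $B$, so $B$ meets $A_0:=\{\pm1,\dots,\pm(k/2-1)\}$ in at most $k-3$ points while $|A_0|=k-2$, and any $\alpha\in A_0\setminus B$ again satisfies $|\beta(e)-\alpha|\le k/2+(k/2-1)=k-1$. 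In every case a valid $\alpha$ exists, completing the argument.

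The main obstacle is exactly the strict inequality $|f_2(e)|<k$ in the even case, where an edge with $|f_1(e)|=k/2$ combined with $|\alpha|=k/2$ can reach the forbidden value $k$. The hypotheses $|f_1(e)|\le k/2$ and $|\supp(f_1)\cap E(C)|\le k-2$ are used precisely here: the first confines the danger to the two endpoint values $\pm k/2$, and the second guarantees that once such an endpoint edge has been accounted for, the counting still leaves an admissible choice of $\alpha$ inside $A_0$.
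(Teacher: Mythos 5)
Your proof is correct and follows essentially the same route as the paper: a counting argument showing that the at most $k-2$ forbidden values cannot exhaust $\{\pm 1,\dots,\pm\lfloor k/2\rfloor\}$, with the same parity split and the same special handling of $\alpha=\pm k/2$ when $k$ is even. Your reformulation via $\beta(e)=g(e)f_1(e)$ is in fact slightly more careful than the paper's use of the value set $f_1(C)$, since it correctly accounts for the signs of $g$ on $C$.
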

\vspace{-0.3cm}
\begin{proof}  Since $|\supp(f_1) \cap E(C)|\leq k-2$, we have $|f_1(C)| \leq k-1$.

 If $k$ is odd,  then there exists an  integer $\alpha\in\{\pm 1,
 \dots, \pm \lfloor\frac{k}{2}\rfloor\}\setminus f_1(C)$.

  If $k$ is even, then there exists at least two integers in  $\{\pm 1, \dots, \pm\frac{k}{2}\}\setminus f_1(C)$. If $\{\pm \frac{k}{2}\}\cap f_1(C)=\emptyset$, let $\alpha=\frac{k}{2}$; otherwise  pick one $\alpha\in\{\pm 1, \cdots, \pm (\frac{k}{2} - 1)\}\setminus f_1(C)$.   Let $f_2 = f_1-\alpha g$.
  
  Clearly, when $|\alpha| < \frac{k}{2}$,  $f_2$ is an integer $k$-flow with $\supp(f_2) = \supp(f_1)\cup E(C)$.
  
  If $\alpha = \frac{k}{2}$,  then $\{\pm \frac{k}{2}\}\cap f_1(C)=\emptyset$. Thus  for each $e\in E(C)$, $|f_1(e)|\leq \frac{k}{2} -1$, so $-(k-1) \leq f_2(e) = f_1(e) -\alpha g(e) \leq k-1$ and $ f_2(e) \not = 0$. Therefore, $f_2$ is an integer $k$-flow with $\supp(f_2) = \supp(f_1)\cup E(C)$. This completes the proof of the lemma.
    \end{proof}

\vspace{-0.3cm}

\begin{lemma}\label{Z3 0:1-2}
Let $C$ be a balanced circuit of $(G,\sigma)$ with length at most $4$ and $g$ be a $2$-flow of $(G,\sigma)$ with $\supp(g_1) = E(C)$. Then for any  $\mathbb Z_3$-flow $\phi$ of  $(G,\sigma)$,  there is an $\alpha \in \mathbb Z_3$ such that $\phi_1= \phi -\alpha g$ is a $\mathbb Z_3$-flow satisfying $|E_{\phi_1  =0}\cap E(C)| \in \{0, |E(C)| -2\}$.
\end{lemma}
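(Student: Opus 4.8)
The plan is to turn this into an elementary counting problem over the three possible choices of $\alpha\in\mathbb Z_3$. First I would record two reductions. Since $g$ is a $2$-flow it is in particular a $\mathbb Z_3$-flow, so $\phi_1=\phi-\alpha g$ is a genuine $\mathbb Z_3$-flow for every $\alpha\in\mathbb Z_3$; and since $\supp(g)=E(C)$, we have $\phi_1(e)=\phi(e)$ for every $e\notin E(C)$. Hence the quantity $|E_{\phi_1=0}\cap E(C)|$ depends only on what happens on the edges of $C$, and I may work entirely inside $C$.

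The key local observation is that for each $e\in E(C)$ the value $g(e)$ is $\pm1$, because $g$ is a $2$-flow (so $|g(e)|<2$) that is nonzero on all of $E(C)=\supp(g)$. Thus $g(e)$ is a unit of the field $\mathbb Z_3$, and the map $\alpha\mapsto \phi(e)-\alpha g(e)$ is a bijection of $\mathbb Z_3$ onto itself. In particular there is exactly one value $\alpha_e:=\phi(e)\,g(e)^{-1}\in\mathbb Z_3$ for which $\phi_1(e)=0$. I would then set, for each $\alpha\in\mathbb Z_3$, the count $n_\alpha=|\{e\in E(C):\alpha_e=\alpha\}|$. By construction $n_\alpha$ is precisely the number of edges of $C$ killed by $\phi-\alpha g$, that is $n_\alpha=|E_{\phi-\alpha g=0}\cap E(C)|$, and the three counts satisfy $n_0+n_1+n_2=|E(C)|=:\ell\le 4$.

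It then remains to exhibit an $\alpha$ with $n_\alpha\in\{0,\ell-2\}$, which is a one-line case check on $\ell$. If some $n_\alpha=0$ we are done at once, so assume all three counts are positive. For $\ell=3$ this forces $(n_0,n_1,n_2)$ to be a permutation of $(1,1,1)$, giving an $\alpha$ with $n_\alpha=1=\ell-2$; for $\ell=4$ it forces a permutation of $(2,1,1)$, and the part equal to $2=\ell-2$ supplies the desired $\alpha$. (The cases $\ell\le 2$ are immediate, since then at least one $n_\alpha$ must be $0$ and $\ell-2\le 0$.) Taking that $\alpha$ makes $\phi_1=\phi-\alpha g$ the required $\mathbb Z_3$-flow.

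I do not anticipate a real obstacle: the whole content is the observation that, over $\mathbb Z_3$, each edge of the balanced circuit $C$ is annihilated by exactly one translate $\alpha g$, after which the conclusion is forced by the arithmetic fact that three positive integers summing to $3$ or $4$ must contain a part equal to $\ell-2$. The only step needing a little care is verifying that $g(e)$ is a unit modulo $3$ for every $e\in E(C)$; this is exactly where the hypotheses that $g$ is a $2$-flow and that $\supp(g)=E(C)$ are used (equivalently, one may invoke Proposition~\ref{signed circuit-flow} to see that the restriction of $g$ to the balanced circuit $C$ is nowhere zero with values $\pm1$).
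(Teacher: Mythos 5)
Your proof is correct, and it takes a somewhat different route from the paper's. The paper argues by cases on the initial count $|E_{\phi=0}\cap E(C)|$: if it is already in $\{0,|E(C)|-2\}$ take $\alpha=0$; if it is at least $|E(C)|-1$ it finds an $\alpha$ clearing all zeros; and in the remaining case ($|E(C)|=4$ with exactly one zero) it splits further on whether the value set $\phi(C)$ has size $2$ or $3$, choosing $\alpha$ outside $\phi(C)$ or equal to the value taken twice. Your argument replaces this case analysis with a single counting identity: since $g(e)=\pm1$ is a unit of $\mathbb Z_3$, each edge of $C$ is annihilated by exactly one translate $\phi-\alpha g$, so the three counts $n_\alpha$ partition $|E(C)|\le 4$, and a partition of $3$ or $4$ into three positive parts must contain a part equal to $|E(C)|-2$. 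This is more uniform, and it has one genuine advantage: it works verbatim for an arbitrary $2$-flow $g$ with values $\pm1$ on $C$, whereas the paper's step ``choose $\alpha\in\mathbb Z_3\setminus\phi(C)$'' only kills all zeros when $g$ is normalized to be nonnegative on $C$ (a normalization the paper makes explicitly only later, in the proof of (III), not in the lemma itself). Your version closes that small sign gap. The only cosmetic point: you may as well note that $n_0=|E_{\phi=0}\cap E(C)|$, which recovers the paper's ``take $\alpha=0$'' case as a special instance of your count.
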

\vspace{-0.3cm}
\begin{proof}
Let $\phi$ be a $\mathbb Z_3$-flow of  $(G,\sigma)$.  If $|E_{\phi  =0}\cap E(C)| \in \{0, |E(C)| -2\}$, take $\alpha = 0$.

 If $|E_{\phi  =0}\cap E(C)| \ge |E(C)| -1$,  we can easily find some $\alpha \in \mathbb Z_3$ such that $\phi_1= \phi-\alpha g_1$ is a $\mathbb Z_3$-flow satisfying  $|E_{\phi_1  =0}\cap E(C)|=0$.

 Now we assume $|E_{\phi  =0}\cap E(C)| \leq |E(C)| -3$ and $|E_{\phi_1  =0}\cap E(C)| \not \in \{0, |E(C)| -2\}$. Then  $|E(C)| = 4$ and $|E_{\phi  =0}\cap E(C)|=|E(C)|-3=1$.    Thus  $|\phi(C)| \in \{2, 3\}$. If $|\phi(C)| = 2$, then choose an $\alpha$ in   $\mathbb Z_3\setminus \phi(C)$.  If $|\phi(C)| =3$, then there is an $\alpha \in \phi(C) \setminus \{0\}$ such that there are exactly two edges $e$ in $E(C)$ with $\phi(e) = \alpha$. Then $\phi_1= \phi- \alpha g$ is a $\mathbb Z_3$-flow satisfying $\phi(e) = \phi_1(e)$ for each $e \in E(G)-E(C)$ and $|E_{\phi_1  =0}\cap E(C)| \in \{0, |E(C)| -2\}$.
\end{proof}

\vspace{-0.3cm}
\begin{lemma}
\label{E0:2-circuits}
Let $(G, \sigma)$ be a  triangularly connected signed graph and $C_1,\dots,C_t$  $(1 \leq t \leq 2)$ be pairwise edge-disjoint balanced circuits of length at most $4$. If $\phi$ is a $\mathbb Z_3$-flow of  $(G,\sigma)$ such that $E_{\phi=0}\subseteq \cup_{i=1}^t E(C_i)$, then $(G,\sigma)$ admits a $4$-NZF.
\end{lemma}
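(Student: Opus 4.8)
The plan is to massage the given $\mathbb Z_3$-flow $\phi$ on each circuit so that only a handful of zeros remain, then convert the modular flow into an integer $3$-flow on its support via Lemma~\ref{4-edges:mod3-to-3}, and finally enlarge the support to cover the circuits using $2$-flows via Lemma~\ref{f-extension}. For each $i$, Proposition~\ref{signed circuit-flow} supplies a $2$-flow $g_i$ with $\supp(g_i)=E(C_i)$; since the $C_i$ are pairwise edge-disjoint, the $g_i$ have pairwise disjoint supports, and this is the feature I rely on throughout.

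First I would apply Lemma~\ref{Z3 0:1-2} in turn to $C_1,\dots,C_t$. Applying it to $C_1$ with $\phi$ yields a $\mathbb Z_3$-flow $\phi^{(1)}=\phi-\alpha_1 g_1$ with $|E_{\phi^{(1)}=0}\cap E(C_1)|\in\{0,|E(C_1)|-2\}$; because $g_1$ is supported on $C_1$ only, $\phi^{(1)}$ agrees with $\phi$ off $C_1$, so $E_{\phi^{(1)}=0}\subseteq\bigcup_i E(C_i)$ and nothing changes on $C_2$. Repeating with $C_2$ (when $t=2$) produces a $\mathbb Z_3$-flow $\phi'$ with $|E_{\phi'=0}\cap E(C_i)|\in\{0,|E(C_i)|-2\}$ for every $i$ and $E_{\phi'=0}\subseteq\bigcup_i E(C_i)$. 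Since the circuits are edge-disjoint, $|E_{\phi'=0}|=\sum_{i=1}^t|E_{\phi'=0}\cap E(C_i)|\le 2t\le 4$, which is exactly why the hypothesis $t\le 2$ lines up with the threshold of Lemma~\ref{4-edges:mod3-to-3}. That lemma then converts $\phi'$ into an integer $3$-flow $f$ with $\supp(f)=\supp(\phi')$, so $E_{f=0}=E_{\phi'=0}$ and $|f(e)|\le 2$ everywhere.

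It remains to eliminate the at most two zeros lying on each $C_i$. For each index $i$ with $|E_{\phi'=0}\cap E(C_i)|>0$ we have $|E_{\phi'=0}\cap E(C_i)|=|E(C_i)|-2$, hence $|\supp(f)\cap E(C_i)|=2$; moreover $|f(e)|\le 2=\tfrac{k}{2}$ on $E(C_i)$ with $k=4$. These are precisely the hypotheses of Lemma~\ref{f-extension}, which furnishes an $\alpha$ such that $f-\alpha g_i$ is an integer $4$-flow with support $\supp(f)\cup E(C_i)$. I would apply this once for $C_1$ and then once for $C_2$. The crucial point is that, because $\supp(g_1)=E(C_1)$ is disjoint from $E(C_2)$, the flow obtained after repairing $C_1$ still agrees with $f$ on $C_2$, so both the bound $|f(e)|\le 2$ and the count $|\supp\cap E(C_2)|=2$ persist and Lemma~\ref{f-extension} applies a second time. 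The resulting integer $4$-flow $f^{\ast}$ has $\supp(f^{\ast})=\supp(f)\cup\bigcup_i E(C_i)=E(G)$, the last equality holding because $E_{\phi'=0}\subseteq\bigcup_i E(C_i)$; thus $f^{\ast}$ is a $4$-NZF.

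The support bookkeeping is routine; the one genuine obstacle is guaranteeing that the successive applications of Lemma~\ref{f-extension} stay legitimate, i.e.\ that repairing one circuit does not inflate a flow value past $\tfrac{k}{2}=2$ on the other circuit, which would block the second application. Edge-disjointness of $C_1$ and $C_2$ is exactly what rules this out, since each modification $f-\alpha g_i$ leaves the values on the other circuit untouched.
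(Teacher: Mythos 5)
Your proposal is correct and follows essentially the same route as the paper: normalize the zeros on each circuit with Lemma~\ref{Z3 0:1-2}, convert to an integer $3$-flow via Lemma~\ref{4-edges:mod3-to-3} using $|E_{\phi'=0}|\le 4$, and then repair each circuit with Lemma~\ref{f-extension} taking $k=4$. The extra bookkeeping you supply about edge-disjointness keeping the second application of Lemma~\ref{f-extension} legitimate is exactly the point the paper leaves implicit.
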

\vspace{-0.3cm}
\begin{proof}
By Lemma~\ref{Z3 0:1-2},  we may assume that  $|E_{\phi=0}\cap E(C_i)|\in\{0, |E(C_i)|-2\}$ for each $i=1,\dots, t$. Then $|E_{\phi=0}| \leq 4$. By Lemma \ref{4-edges:mod3-to-3}, there is a $3$-flow $f$ such that $\supp(f) = \supp(\phi)$ and of course $f$ is a $4$-flow.   Taking $k= 4$, we have $|f(e)| \leq \frac{k}{2}$ and  $|E_{f \not =0}\cap E(C_i)| = 2= k-2$ for each $C_i$ with $E_{f=0}\cap E(C_i) \not = \emptyset$. Applying Lemma~\ref{f-extension} on each $C_i$ with $E_{f=0}\cap E(C_i) \not = \emptyset$, one can obtain  a desired $4$-NZF.
\end{proof}

By Lemma 2.2 of \cite{Fan2008}, the proof of the following lemma is  straightforward.

\begin{lemma}
\label{shift-0}
Let $f$ be a $\mathbb Z_3$-flow of  $(G,\sigma)$ and $H=T_1T_2\cdots T_m$ be a triangle-path in $G$ such that each $T_i$ is balanced for $1\leq i\leq m$. Given an edge $e_0\in E(H)$, then there is another
$\mathbb Z_3$-flow $g$ of $(G,\sigma)$ satisfying:

(1)  $f(e) = g(e)$ for each $e\not \in E(H)$;

(2)  $g(e) \not = 0$ for each edge $e\in E(H) -\{e_0\}$.
\end{lemma}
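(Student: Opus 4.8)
The plan is to modify $f$ only by adding $\mathbb{Z}_3$-combinations of the boundary flows of the triangles in $H$. Since each $T_i$ is a balanced triangle, Proposition~\ref{signed circuit-flow} (a balanced circuit has a $2$-NZF) supplies a $\mathbb{Z}_3$-flow $g_i$ with $\supp(g_i)=E(T_i)$, so $g_i$ is nonzero on all three edges of $T_i$. Any flow of the form $g=f+\sum_{i=1}^m\alpha_i g_i$ with $\alpha_i\in\mathbb{Z}_3$ is again a $\mathbb{Z}_3$-flow and agrees with $f$ off $E(H)$; hence condition (1) holds automatically for every such choice, and the whole task is to choose the $\alpha_i$ so that condition (2) holds. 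I first record the combinatorial shape of a triangle-path: for $1<i<m$ the triangle $T_i$ meets $T_{i-1}$ in a single edge $d_{i-1}$, meets $T_{i+1}$ in a single edge $d_i$, and $d_{i-1}\ne d_i$ (otherwise $d_{i-1}\in E(T_{i-1})\cap E(T_{i+1})=\emptyset$); its third edge $p_i$ lies in no other triangle of $H$. Thus $T_1$ and $T_m$ each have two \emph{private} edges and one shared edge, while every interior triangle has one private edge and two shared edges.

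The engine of the argument is the following one-parameter count. Whenever I have already fixed the flow on all but one triangle $T_i$ and I wish to make two prescribed edges of $T_i$ nonzero, I am choosing $\alpha_i\in\mathbb{Z}_3$; since $g_i$ is nonzero on each of these two edges, each edge is killed by \emph{at most one} value of $\alpha_i$, so at most two of the three values of $\alpha_i$ are forbidden and a good choice always survives. The key idea is therefore to arrange the processing so that at every triangle I only ever need to control \emph{two} of its edges, routing the single permitted zero to $e_0$. Fix $j_0$ with $e_0\in E(T_{j_0})$. I sweep inward from both ends toward $T_{j_0}$: process $T_1,\dots,T_{j_0-1}$ in increasing order and $T_m,\dots,T_{j_0+1}$ in decreasing order. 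When $T_i$ ($i<j_0$) is processed, the two edges that will never be touched again are its edge shared with $T_{i-1}$ and its private edge (for $T_1$, its two private edges); I use $\alpha_i$ to make exactly these two nonzero, leaving the edge shared with $T_{i+1}$ to be settled later. The backward sweep is symmetric. By the one-parameter count each step succeeds.

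After both sweeps, every edge of $H$ outside $T_{j_0}$ is nonzero and final, and the only edges still to be fixed are the (at most three) edges of $T_{j_0}$, of which $d_{j_0-1}$ and $d_{j_0}$ carry the values handed over by the two sweeps. Since $e_0$ is one of the three edges of $T_{j_0}$, I must make only the \emph{other two} nonzero, which is again a two-edge, one-parameter problem solved by choosing $\alpha_{j_0}$; the permitted zero, if any, lands on $e_0$. The degenerate cases ($m=1$, or $j_0\in\{1,m\}$) are handled by the same count with one of the sweeps empty. I expect the only real work to be the bookkeeping that certifies the invariant ``exactly two edges of each triangle need to be controlled'': this rests on the structural fact that $d_{i-1}\ne d_i$ and that sweeping a triangle from its far side finalizes precisely two of its edges, with $T_{j_0}$ absorbing the single allowed zero at $e_0$.
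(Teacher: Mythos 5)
Your proof is correct. The paper gives no proof of this lemma at all --- it merely remarks that the statement is ``straightforward'' by Lemma~2.2 of Fan et al.\ --- and your argument (adding $\mathbb{Z}_3$-multiples $\alpha_i g_i$ of the triangle flows, sweeping from both ends toward the triangle containing $e_0$, and noting that at each step at most two of the three values of $\alpha_i$ are forbidden) is exactly the standard induction behind that cited lemma, correctly transplanted to the signed setting where each balanced triangle supports a $2$-flow that is nowhere zero modulo $3$.
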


\begin{lemma}\label{E0:34-circuits}
Let $(G, \sigma)$ be a  triangularly connected signed graph,  $C_1$ be a balanced triangle and  $C_2$ be a balanced circuit of length at most $4$ such that $|E(C_1)\cap E(C_2)|\leq 1$.  If $\phi$ is a $\mathbb Z_3$-flow  of $G$ such that $E_{\phi=0}\subseteq  E(C_1)\cup E(C_2)$,  then $(G,\sigma)$ admits a $4$-NZF.
\end{lemma}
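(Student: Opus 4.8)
The plan is to reduce both sub-cases $|E(C_1)\cap E(C_2)|\in\{0,1\}$ to Lemma~\ref{E0:2-circuits}, whose hypothesis already covers one or two pairwise edge-disjoint balanced circuits of length at most $4$ carrying all the zeros of a $\mathbb Z_3$-flow.

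First I would dispose of the edge-disjoint case. If $E(C_1)\cap E(C_2)=\emptyset$, then $C_1$ (a balanced triangle, hence of length $3\le 4$) and $C_2$ (of length at most $4$) are two pairwise edge-disjoint balanced circuits of length at most $4$ with $E_{\phi=0}\subseteq E(C_1)\cup E(C_2)$. This is precisely the case $t=2$ of Lemma~\ref{E0:2-circuits}, so $(G,\sigma)$ admits a $4$-NZF and there is nothing more to do.

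The only genuine difficulty is the shared-edge case $E(C_1)\cap E(C_2)=\{e^*\}$, where adjusting $\phi$ along $C_1$ (say by adding a multiple of a $2$-flow supported on $C_1$) also changes the value on $e^*\in E(C_2)$, so one cannot independently clean up the two circuits. To decouple them I would exploit that $C_1$ is a \emph{triangle}: viewed as a triangle-path of length $1$ (with $T_1=C_1$, which is balanced), Lemma~\ref{shift-0} applies. Applying it to $H=C_1$ with the distinguished edge $e_0=e^*$ produces a $\mathbb Z_3$-flow $\phi_1$ that agrees with $\phi$ off $E(C_1)$ and is nonzero on $E(C_1)\setminus\{e^*\}$. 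Consequently every zero of $\phi_1$ lies either on $e^*$ or off $C_1$; since off $C_1$ the zeros of $\phi_1$ coincide with those of $\phi$ and hence lie in $(E(C_1)\cup E(C_2))\setminus E(C_1)=E(C_2)\setminus\{e^*\}$, we obtain $E_{\phi_1=0}\subseteq\{e^*\}\cup(E(C_2)\setminus\{e^*\})=E(C_2)$. Thus all zeros have been pushed into the single balanced circuit $C_2$.

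Finally, with $E_{\phi_1=0}\subseteq E(C_2)$ and $C_2$ a balanced circuit of length at most $4$, I would invoke Lemma~\ref{E0:2-circuits} in the case $t=1$ (with the single circuit $C_2$) to conclude that $(G,\sigma)$ admits a $4$-NZF. The one point I expect to be the main obstacle, and which must be checked carefully, is that the application of Lemma~\ref{shift-0} really confines all zeros to $E(C_2)$ — that is, that it leaves the two non-shared edges of $C_1$ nonzero while only possibly affecting the shared edge $e^*$, which already belongs to $C_2$. This is exactly what the choice $e_0=e^*$ guarantees, and it is the step where the hypothesis that $C_1$ is a \emph{triangle} (so that Lemma~\ref{shift-0} is available) is essential; a general length-$4$ circuit would not be a triangle-path and this shortcut would fail.
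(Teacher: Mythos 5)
Your proof is correct and takes essentially the same route as the paper's: the edge-disjoint case goes directly to Lemma~\ref{E0:2-circuits}, and in the shared-edge case the paper likewise applies Lemma~\ref{shift-0} to $H=C_1$ with $e_0$ the common edge to push all zeros into $E(C_2)$ before invoking Lemma~\ref{E0:2-circuits} again. Your extra remark that the balanced triangle $C_1$ is what makes Lemma~\ref{shift-0} available is exactly the right point to flag.
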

\begin{proof}
If $C_1$ and $C_2$ are edge-disjoint, then  by Lemma~\ref{E0:2-circuits}, $(G,\sigma)$ admits a $4$-NZF.

 If $C_1$ and $C_2$ are not edge-disjoint, then $|E(C_1)\cap E(C_2)|= 1$. Let $e_0$ be the common edge of $C_1$ and $C_2$. Applying Lemma~\ref{shift-0} on $H= C_1$ and $e_0$, we may assume   $E_{\phi=0}\subseteq   E(C_2)$.  By Lemma~\ref{E0:2-circuits}, $(G,\sigma)$ admits a $4$-NZF.
\end{proof}
\vspace{-0.3cm}

\section{Sharpness of Theorem  \ref{Tri-5-flow}}
\label{se:sharpness}
 Fan et al. \cite{Fan2008} give a complete characterization of  triangularly connected ordinary graphs that admit a  $4$-NZF but no $3$-NZF. In this subsection we present a family of unbalanced  triangularly connected signed graphs that  admit a  $4$-NZF but no $3$-NZF.  Interestingly all those graphs do not contain an unbalanced triangle. This indicates that there are  unbalanced triangularly connected signed  graphs without unbalanced triangles.

 For each integer $t \geq 4$,  construct the signed graph $(G_{2t}, \sigma)$ as follows (see Figure \ref{FIG: unblanced} for an illustration with $t=4$):

 (1)  The  graph $G_{2t}$  is constructed from the two circuits $C_1=x_1x_2\cdots x_tx_1$ and $C_2=y_1y_2\cdots y_ty_1$ by adding the edges $y_ix_i$ and $y_ix_{i+1}$ for each $i \in Z_t$;

 (2)  $E_N(G_{2t},\sigma)$ consists of the edges $x_1x_2,y_1y_2$ and all edges $y_ix_i,y_ix_{i+1}$ except $y_1x_2$.

  \begin{figure}\begin{center}
\tikzstyle{help lines}+=[dashed]
\begin{tikzpicture}[scale=0.6]
\draw [fill=black] (0,3) circle (0.06);
\draw [fill=black] (-3,0) circle (0.06);
\draw [fill=black] (0,-3) circle (0.06);
\draw [fill=black] (3,0)circle (0.06);

\draw [fill=black] (-0.75,0.75) circle (0.06);
\draw [fill=black] (-0.75,-0.75) circle (0.06);
\draw [fill=black] (0.75,-0.75) circle (0.06);
\draw [fill=black] (0.75,0.75) circle (0.06);
\draw (0,3)--(-3,0)--(0,-3)--(3,0);
\draw[dashed] (3,0)--(0,3);
\draw (-0.75,0.75)--(-0.75,-0.75)--(0.75,-0.75)--(0.75,0.75);
\draw[dashed] (0.75,0.75)--(-0.75,0.75);
\draw[dashed] (0,3)--(-0.75,0.75)--(-3,0)--(-0.75,-0.75)
--(0,-3)--(0.75,-0.75)--(3,0)--(0.75,0.75);
\draw (0,3)--(0.75,0.75);
\node[below] at (-1,1.3){$x_1$};\node[above] at (0,3){$y_1$};
\node[below] at (-1,0){$x_4$};\node[below] at (1,-0.1){$x_3$};
\node[below] at (1,1.3){$x_2$};\node[right] at (3,0){$y_2$};
\node[left] at (-3,0){$y_4$};\node[below] at (0,-3){$y_3$};

\end{tikzpicture}
\caption{\small  an unbalanced signed graph $(G_8,\sigma)$}
\label{FIG: unblanced}
\end{center}
\end{figure}
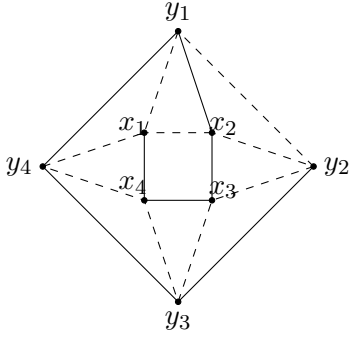

\begin{theorem}
\label{sharp}
For each $t\ge 4$, $(G_{2t},\sigma)$ is flow-admissible and  admits a $4$-NZF but no $3$-NZF.
\end{theorem}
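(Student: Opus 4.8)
The plan is to prove the three assertions in turn, with the non-existence of a $3$-NZF as the heart of the argument. First I would record two structural facts that are immediate from the construction. Relabelling the vertices cyclically by $v_{2i-1}=x_i$ and $v_{2i}=y_i$ exhibits the underlying graph as the square $C_{2t}^{2}$, whose triangles are exactly $v_jv_{j+1}v_{j+2}$ ($j\in\Z_{2t}$); consecutive triangles share a single edge and together cover $E(G_{2t})$, so these triangles form a cyclic chain of edge-sharing triangles and $G_{2t}$ is triangularly connected. A direct check of the few triangles meeting the indices $1,2$, together with the generic ones, shows that \emph{every} triangle is balanced, while each of $C_1,C_2$ carries a single negative edge and is therefore unbalanced. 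In particular $G_{2t}$ is unbalanced, has no unbalanced triangle, and carries an odd number $2t+1$ of negative edges.

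For the positive direction I would not build a flow by hand. Since $G_{2t}$ is triangularly connected and contains no unbalanced triangle, Lemma~\ref{2-operation-2} applied to any balanced triangle $T$ produces a $\Z_3$-flow $\phi$ with $E_{\phi=0}\subseteq E(T)$; as $T$ is a balanced circuit of length $3\le 4$, Lemma~\ref{E0:2-circuits} (with a single circuit $C_1=T$) upgrades $\phi$ to a $4$-NZF. Flow-admissibility is then automatic from the definition. This settles everything except the claim that no $3$-NZF exists.

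For the negative direction, suppose a $3$-NZF $f$ exists, so all values lie in $\{\pm1,\pm2\}$; since flow existence is independent of the chosen orientation, I would fix the bidirected orientation in which every half-edge points out of its lower-indexed end. Writing $a_j=f(v_jv_{j+1})$ for the short edges and $b_j=f(v_jv_{j+2})$ for the long edges, the conservation law at $v_j$ takes one uniform ``generic'' form for all $j$ except the two vertices $v_3,v_4$ incident with the sign-exceptional edges $v_2v_3,\,v_1v_3,\,v_2v_4$. Solving the resulting linear recurrence around the cycle yields $b_j=P-S_j$ for $j\ge 3$, where $S_j=\sum_{i=3}^{j}a_i$ and $P=a_2-b_1$ is fixed by the boundary data, together with a single closing identity $\sum_{j}a_j=2P$ and $S_{2t}=2P-a_1-a_2$.

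Now the bound does the work. Because $b_j\in\{\pm1,\pm2\}$ and $b_j\neq 0$, each $S_j$ with $j\ge 3$ lies in the four-element set $\{P-2,P-1,P+1,P+2\}$ and avoids $P$; the value $S_2=0$ lies in this set as well. Since each increment $a_{j+1}\in\{\pm1,\pm2\}$, the admissible transitions turn this set into the path $P-2\,\text{--}\,P-1\,\text{--}\,P+1\,\text{--}\,P+2$, which is bipartite with parts $\{P-2,P+1\}$ and $\{P-1,P+2\}$. Hence the part occupied by $S_j$ alternates with $j$, and after the even number $2t-2$ of steps $S_{2t}$ lies in the same part as $S_2=0$. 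Comparing this forced location of $S_{2t}$ with $S_{2t}=2P-a_1-a_2$ under the constraints $a_1\in\{\pm1,\pm2\}$ and $a_2-P=b_1\in\{\pm1,\pm2\}$ forces $a_1\in\{0,\pm3\}$ in each of the four cases $P\in\{\pm1,\pm2\}$, a contradiction. I expect the main obstacle to be the bookkeeping rather than the idea: correctly identifying the exactly two conservation equations that deviate from the generic recurrence and closing the cyclic system, after which the bipartite parity of the confined walk delivers the contradiction; the oddness of $|E_N(G_{2t})|$ is the conceptual reason this parity fails to close.
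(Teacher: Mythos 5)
Your proposal is correct, and its core --- the non-existence of a $3$-NZF --- is proved by a genuinely different route from the paper's. The paper disposes of the whole second part in one stroke by observing that $G_{2t}$ is $4$-regular, hence Eulerian, with $2t+1$ (an odd number of) negative edges, and then invoking Ma\v{c}ajov\'a and \v{S}koviera's theorem on signed Eulerian graphs: flow-admissibility plus oddness gives the $4$-NZF, and the non-existence of a $3$-NZF reduces to the (left implicit) check that $(G_{2t},\sigma)$ has no decomposition into three signed Eulerian subgraphs with a common vertex, each carrying an odd number of negative edges. (Flow-admissibility itself is obtained from Proposition~\ref{flow admissible}, since the graph is bridgeless and every edge lies in a balanced triangle; your derivation of it as a byproduct of the $4$-NZF, which you obtain from Lemmas~\ref{2-operation-2} and~\ref{E0:2-circuits}, is equally valid and stays inside the paper's own toolkit.) Your conservation-law argument for the negative direction is self-contained and elementary where the paper's is citation-based, at the cost of being computation-heavy; I checked the bookkeeping and it does close. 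Two small corrections to your narrative: the reason only $v_3,v_4$ carry non-generic equations is not that they are \emph{the} vertices incident with the sign-exceptional edges ($v_1,v_2$ are too), but that with your orientation a forward edge $v_jv_{j+1}$ or $v_jv_{j+2}$ contributes with coefficient $+1$ at $v_j$ regardless of its sign, so only the backward occurrences of $v_2v_3$, $v_1v_3$, $v_2v_4$ perturb the recurrence; and there are really two closing conditions (at $v_1$ and at $v_2$), one of which forces the odd-index constant $a_2-b_1$ and the even-index constant $-b_2$ to coincide into your single $P$, the other yielding $\sum_j a_j=2P$. With $P=-b_2\in\{\pm1,\pm2\}$ the value $S_2=0$ does lie in $\{P-2,P-1,P+1,P+2\}$, the transition graph is the claimed bipartite path, and in each of the four cases the constraints $b_1=a_2-P\in\{\pm1,\pm2\}$ and $S_{2t}=2P-a_1-a_2$ force $a_1\in\{0,\pm3\}$ --- so the contradiction is genuine for every $t\ge 4$.
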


Since $(G_{2t},\sigma)$ is bridgeless and every edge is contained in a balanced triangle,  by Proposition~\ref{flow admissible}, it is flow-admissible.
 Since $G_{2t}$ is  Eulerian,   the second part of Theorem~\ref{sharp} follows from the following  result due to Ma\v{c}ajova  and \v{S}koviera.

\begin{theorem}(Ma\v{c}ajova  and \v{S}koviera\cite{MS-eulerian})
\label{eulerian}
Let $(G, \sigma)$ be an   Eulerian signed graph with an odd number of negative edges.  Then $(G,\sigma)$ admits a  $4$-NZF if it is flow-admissible. Moreover $(G,\sigma)$ admits a  $3$-NZF if and only if $(G,\sigma)$ can be decomposed into three  signed Eulerian subgraphs that have a vertex in common and that each has an odd number of negative edges.
\end{theorem}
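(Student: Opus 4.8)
The plan is to reduce the whole statement to a question about an ordinary Eulerian graph equipped with a free involution, via the signed double cover. Since $(G,\sigma)$ is unbalanced (it has an odd number of negative edges), its double cover $\hat G$ --- two copies $v^+,v^-$ of each vertex, with a positive edge $uv$ lifting to $\{u^+v^+,u^-v^-\}$ and a negative edge to $\{u^+v^-,u^-v^+\}$ --- is a \emph{connected} ordinary graph carrying a fixed-point-free involution $\iota$ interchanging the two sheets. Under this correspondence an integer $k$-NZF of $(G,\sigma)$ is precisely a nowhere-zero integer $k$-flow $\hat f$ on $\hat G$ that is \emph{antisymmetric}, i.e. $\hat f\circ\iota=-\hat f$ (each edge of $G$ corresponds to an $\iota$-orbit of two edges carrying opposite values, so one is nonzero iff both are). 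As $G$ is Eulerian, so is $\hat G$. The decisive structural observation is that a balanced circuit of $G$ lifts to a pair $\{D,\iota D\}$ of $\iota$-interchanged cycles, on which an antisymmetric flow may carry any value (put $+c$ on $D$ and $-c$ on $\iota D$), whereas an unbalanced circuit lifts to a single $\iota$-invariant cycle on which $\iota$ acts antipodally and which admits no nonzero constant antisymmetric flow. These $\iota$-invariant cycles --- equivalently, the unbalanced circuits --- are the entire source of difficulty, and explain both the failure of a $2$-flow (Lemma~\ref{eulerian-2-flow}) and the need to pass to $4$.

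For the $4$-flow assertion I would decompose $E(G)$ into edge-disjoint circuits; since the total number of negative edges is odd, an odd number of these circuits are unbalanced. Each balanced circuit receives an antisymmetric $\pm1$ value by Proposition~\ref{signed circuit-flow}. For the unbalanced circuits I would use a parity device: adjoining a negative loop at a chosen vertex turns an Eulerian signed graph with an odd number of negative edges into one with an even number, which has a $2$-NZF by Lemma~\ref{eulerian-2-flow}, and deleting the loop leaves a $\pm1$-valued function conserved everywhere except for a defect of size $2$ at that vertex. One then combines the $2k+1$ unbalanced circuits in $k$ pairs plus a leftover, routing the resulting defects through the connected host graph so that they cancel, exactly as a short or long barbell absorbs them (Proposition~\ref{signed circuit-flow}); flow-admissibility, which for an Eulerian graph means that $(G,\sigma)$ is not equivalent to a signed graph with a single negative edge (Proposition~\ref{flow admissible}), is what prevents the leftover unbalanced circuit from being stranded. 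Assembling the balanced $\pm1$ contributions with these $\{\pm1,\pm2\}$ corrections, and repairing any accidental zeros on short balanced circuits by Lemma~\ref{f-extension}, yields a nowhere-zero flow all of whose values lie in $\{\pm1,\pm2\}$, i.e. a $4$-NZF.

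For the $3$-flow equivalence I would work with the flow values $\{\pm1,\pm2\}$. Given a $3$-NZF $f$, reduction modulo $2$ shows that $E_{f=\pm1}$ is an Eulerian subgraph, and analysing the values meeting at each vertex, the aim is to split $E(G)$ into three Eulerian subgraphs sharing a common vertex at which the three incident flow contributions cancel in the bidirected sense; the construction is then arranged so that each of the three parts carries an odd number of negative edges. For the converse, starting from $G=H_1\cup H_2\cup H_3$ with the $H_i$ Eulerian, odd, and sharing a vertex $z$, each pairwise union $H_i\cup H_j$ has an even number of negative edges and hence a $\pm1$-valued $2$-NZF by Lemma~\ref{eulerian-2-flow}; the task is to choose these pairwise flows consistently and superpose them so that the three single-piece monodromy obstructions cancel simultaneously at $z$, producing an integer flow with values in $\{\pm1,\pm2\}$. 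Here the value $2$ is unavoidable, and the number three is forced because any decomposition into two odd Eulerian parts would make the total number of negative edges even, contradicting the hypothesis, while the single part $G$ itself, being Eulerian with an odd number of negative edges, has no $2$-NZF.

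The main obstacle, common to both parts, is the global assembly: no individual unbalanced Eulerian piece carries a nowhere-zero flow on its own, so the pieces must be combined in exactly the right parity classes --- pairs for the $4$-flow and a genuine triple for the $3$-flow --- and the resulting defects routed through the connected graph without creating new zeros, all while every value stays in $\{\pm1,\pm2\}$. Showing that a naive superposition, which can vanish on the singly-covered edges, can always be corrected, and that for the $3$-flow no decomposition into fewer than three odd Eulerian parts can succeed, is the technical heart of the argument.
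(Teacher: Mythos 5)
First, a point of comparison: the paper contains no proof of Theorem~\ref{eulerian} at all --- it is quoted from Ma\v{c}ajov\'a and \v{S}koviera \cite{MS-eulerian} and used only to conclude that the graphs $(G_{2t},\sigma)$ of Section~\ref{se:sharpness} admit no $3$-NZF --- so your proposal can only be judged against the original source and on its own terms, and there it has genuine gaps. The sharpest one is in your converse of the $3$-flow characterization, where the plan is provably unworkable as stated. You take $2$-NZFs $g_1,g_2,g_3$ of the pairwise unions $H_2\cup H_3$, $H_1\cup H_3$, $H_1\cup H_2$ and want to superpose them ``consistently'' into a flow with values in $\{\pm1,\pm2\}$. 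But every edge of $G$ lies in exactly \emph{two} of these three unions, so any superposition $\varepsilon_1g_1+\varepsilon_2g_2+\varepsilon_3g_3$ with $\varepsilon_i\in\{\pm1\}$ takes all its values in $\{0,\pm2\}$; if it were nowhere-zero, dividing by $2$ would produce a $2$-NZF of $(G,\sigma)$ itself, which is impossible by Lemma~\ref{eulerian-2-flow} since $G$ is Eulerian with an odd number of negative edges. Unequal coefficients do not rescue this: to guarantee nonzero values on $H_1$ one then needs $|a_2|\neq|a_3|$, after which some edge of $H_1$ can receive $|a_2|+|a_3|\ge 3$, unless the relative signs of $g_2,g_3$ are globally constant on $H_1$ --- precisely the nontrivial ``consistency'' your sketch assumes without providing a mechanism. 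The construction that works is the loop device you describe in your $4$-flow paragraph, deployed here instead: attach a negative loop at the common vertex $z$ to each $H_i$; the enlarged graph is even, connected and Eulerian, hence has a $2$-NZF by Lemma~\ref{eulerian-2-flow}, and deleting the loop leaves a $\pm1$-valued function on $H_i$ conserved everywhere except for a defect of $\pm2$ at $z$. Give two pieces defect $+2$, double the third piece (defect $-4$), and add: $2+2-4=0$ yields a $3$-NZF with values in $\{\pm1,\pm2\}$. Your forward direction, meanwhile, is only a statement of intent (``the aim is to split $E(G)$''): extracting the three odd Eulerian parts \emph{and} the common vertex from a given $3$-NZF is real work, and your remark that a decomposition into two odd parts is parity-impossible does not produce a decomposition into three.

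The $4$-flow half has the same character: the assembly step, which you yourself call ``the technical heart,'' is exactly what is missing. Two concrete failure points. First, nothing guarantees that your chosen circuit decomposition contains more than one unbalanced circuit; if it contains exactly one, there is nothing to pair it with, and a single defect of size $2$ cannot be ``routed'' anywhere, because the defects of a flow must cancel in total --- flow-admissibility has to be converted into an actual second unbalanced structure (or a triple sharing a vertex, as above), not merely invoked by name. Second, the connecting paths of your barbells run through edges that already carry $\pm1$ from circuits of the decomposition, and possibly through one another, so the superposed values are not confined to $\{\pm1,\pm2\}$: they can vanish and can exceed $3$; Lemma~\ref{f-extension} repairs zeros only on a balanced circuit under hypotheses ($|\supp(f_1)\cap E(C)|\le k-2$ and $|f_1(e)|\le k/2$ on $C$) that your construction has not arranged. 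Finally, the double-cover/antisymmetric-flow framing at the start is correct but inert --- it is never used in either half. In short, you have identified the right ingredients (parity of unbalanced circuits, the loop-defect trick, the necessity of the value $2$ and of three parts), but the two assembly arguments that constitute the theorem are, respectively, absent and, as literally proposed, impossible.
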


\vspace{-0.3cm}

\section{Proof of Theorem \ref{Tri-5-flow}}
\label{se:the proof}
We prove Theorem \ref{Tri-5-flow} by contradiction. Let $(G,\sigma)$ be a  counterexample such that $\beta(G) = \sum_{v\in V(G)} (d(v) -2)$ is as small as possible. Let $\tau$ be a fixed orientation of $(G,\sigma)$ in the proof.

Hu and Li \cite{HuLi-wheel-2018} show that $(W_5,\sigma^*)$ in Figure \ref{FIG: 1} admits a   $5$-NZF but no $4$-NZF.    Then $(G,\sigma)$ does not admit a $4$-NZF.   By Lemma \ref{E0:2-circuits} we have the following fact which will be applied frequently in the proof.

\medskip \noindent
{\bf Fact A}  $(G,\sigma)$ does not admit a $\Z_3$-flow $\phi$ such that $E_{\phi=0} \subseteq  E(C_1)\cup \cdots \cup E(C_t)$ where  $1 \leq t \leq 2$ and  $C_1,  \dots, C_t$ are   edge-disjoint balanced circuits of length at most four.

\medskip
 If $G$ contains two parallel edges $e_1$ and $e_2$, then  after inserting a degree $2$-vertex into $e_1$, the resulting graph $G'$  remains triangularly connected,  flow-admissible, and $\beta(G') = \beta(G)$. Thus  in the following proof, we assume  that $G$ is simple.

If $G$ contains no unbalanced triangle, let $T$ be a triangle. By Lemma~\ref{2-operation-2}, let $\phi$ be  a $\mathbb Z_3$-flow $\phi$ with $E_{\phi = 0} \subseteq E(T)$, a contradiction to Fact A. Thus  $G$ contains an unbalanced triangle.

\medskip \noindent
{\bf (I)} {\em $(G,\sigma)$ contains  two   edge-disjoint unbalanced triangles.}

\medskip \noindent
{\bf Proof of (I).}  Suppose to the contrary that $(G,\sigma)$ contains no edge-disjoint unbalanced triangles.  Let $T$ be an unbalanced triangle and $\phi$  be a $\mathbb Z_3$-flow $\phi$ with $E_{\phi = 0} \subseteq E(T)$.

 We consider two cases in the following.

\medskip \noindent
{\bf Case I.1.} $(G,\sigma)$ contains at least two unbalanced triangles.

Let $T_1, T_2, \dots, T_t$ be all the unbalanced triangles where $T=T_1$. Then $t \geq 2$. Since $(G,\sigma)$ contains no edge-disjoint unbalanced triangles, all unbalanced triangles share a common edge, denoted  by $uv$. For each $i$ denote by $w_i$ the third vertex of $T_i$.   Then for any $1 \leq i < j \leq t$,  $T_i\bigtriangleup T_j$ is a balanced circuit of length $4$.

Since $T_1\bigtriangleup T_2$ is a balanced $4$-circuit, by Fact A, $\phi(uv) = 0$ and $uv$ is not contained in a  balanced triangle.  This implies that no other triangle than $T_1, T_2,\dots, T_t$ contains $uv$.

Since $(G,\sigma)$ is flow-admissible, there is a signed circuit  $C$ containing $uv$.  By Proposition~\ref{signed circuit-flow}, let $f$ be a $2$-flow (if $C$ is a balanced circuit or a short barbell)  or a $3$-flow (if $C$ is a long barbell) such that $\supp(f) = E(C)$.  Let $\phi_1 = \phi + f$ be the $\mathbb Z_3$-flow. Then $\phi_1(uv) \not = 0$.

Let $e\in E_{\phi_1 = 0}- \bigcup_{i=1}^t E(T_i)$. Then there is a triangle-path $S_1S_2\cdots S_k$ where $e\in S_k$, $uv \in S_1 \in \{T_1,T_2,\dots, T_t\}$, and $S_2,S_3,\dots, S_k$ are balanced.   Let $H=S_2S_3\cdots S_k$ and $e' = E(S_1)\cap E(S_2)$. By Lemma~\ref{shift-0},  there is a
$\mathbb Z_3$-flow $g$ of $(G,\sigma)$ satisfying:

(1)  $\phi_1(e) = g(e)$ for each $e\not \in E(H)$;

(2)  $g(e) \not = 0$ for each edge $e\in E(H) -\{e'\}$.

By applying the above operation on each edge in $E_{\phi_1 = 0}- \bigcup_{i=1}^t E(T_i)$, one can obtain a $\mathbb Z_3$-flow $\phi_2$ such that $E_{\phi_2=0} \subseteq \bigcup_{i=1}^t E(T_i) - \{uv\}$.

Denote $C_i = T_1\bigtriangleup T_i$ for each $i= 2,\dots, t$. Then each  $C_i$ is a balanced $4$-circuit.   For each $i = 2, \dots, t$, let  $f_i$ be a $2$-flow of $(G,\sigma)$ with $\supp(f_i) = E(C_i)$ and let $\alpha_i \in \mathbb Z_3- \{\phi_2 (uw_i)f_i(uw_i), \phi_2(vw_i)f_i(vw_i)\}$. Let $\phi_3 = \phi_2 - \sum_{i=2}^t \alpha_i f_i$. Then $\phi_3$ is a $\mathbb Z_3$-flow such that $E_{\phi_3= 0} \subseteq \{uw_1,vw_1\} \subseteq E(C_2)$, a contradiction to Fact A.

\medskip \noindent
{\bf Case I.2.} $(G,\sigma)$ contains only one unbalanced triangle.

Denote   $E(T) = \{e_1,e_2,e_3\}$.   If every edge in $E_{\phi=0}$ is  contained in a triangle other than $T$,  then every edge in $E_{\phi=0}$ is contained in a balanced triangle since $T$ is the only unbalanced triangle in $(G, \sigma)$. By Lemma~\ref{E0:2-circuits}, $|E_{\phi=0}| \geq 2$ and those balanced triangles are not edge-disjoint. This implies that there is a $K_4$ containing $T$ where $T$ is the only unbalanced triangle in the $K_4$. However, $T$ is the symmetric difference of the other three balanced triangles in the $K_4$. Thus $T$ is balanced, a contradiction. Therefore there is one edge in $E_{\phi=0}$  that is contained in only one triangle which is $T$.

 Since $(G,\sigma)$ is flow-admissible, there is another edge in $E(T)$ which is contained in a balanced triangle.  Without loss generality, assume that $e_1$ is contained in only one triangle, $\phi(e_1) = 0$ and $e_3$ is contained a balanced triangle. Note that by Lemma~\ref{2-operation-2}, if $e_2$ is not contained in a balanced triangle, then $\phi(e_1) = \phi(e_2) = 0$.

   Since $(G,\sigma)$ is flow-admissible, by Proposition \ref{flow admissible},  there is a signed circuit $C_1$ containing $e_1$ and there is a signed circuit $C_2$ containing  $e_2$.   We choose $C_2= C_1$ if there is a signed circuit containing both $e_1$ and $e_2$; otherwise choose any signed circuit  $C_2$ containing $e_2$.

   By Lemma~\ref{signed circuit-flow}, let $f_i$  be a $2$-flow or $3$-flow  of $(G,\sigma)$ with $\supp(f_i) = E(C_i)$ for each $i=1,2$.

    We construct another $\Z_3$-flow $\phi_1$  of $(G,\sigma)$  as follows:

Let $\alpha \in \Z_3 -\{0, \phi(e_2)f_2(e_2)\}$.  If $C_1 = C_2$,  then $f_1 = f_2$ and let $\phi_1 = \phi  - \alpha f_1$; if $C_1 \not = C_2$, then $f_1(e_2) = f_2(e_1) = 0$ and let $\phi_1 = \phi- \alpha (f_1 + f_2)$.

Then $E_{\phi_1=0} \cap \{e_1,e_2\} = \emptyset$ and every edge in $E_{\phi_1=0}$ is contained in a balanced triangle.  Similar to the argument in Case I.1, there is a $\mathbb Z_3$-flow $\phi_2$ such that $E_{\phi_2 = 0} \subseteq \{e_3\}$  if $e_2$ is not contained in a balanced triangle or $E_{\phi_2 = 0} \subseteq \{e_2, e_3\}$ otherwise, a contradiction to Fact A.

We obtain a contradiction  in either case and thus completes the proof of (I).
\hfill $\Box$

\medskip \noindent
{\bf (II)} {\em $G$ is locally connected.}

\medskip \noindent
{\bf Proof of (II).}  Suppose to the contrary that $G$ is not locally connected. Then there is a vertex $v\in V(G)$ such that $G[N_G(v)]$ is not connected. Since $G$ is triangularly connected, each component of $G[N_G(v)]$ is nontrivial. Let $H$ be a component of $G[N_G(v)]$. Split $v$ into two nonadjacent vertices $v'$ and $v''$ where $v'$ is adjacent to all vertices in $H$ and $v''$ is adjacent to all vertices in $N_G(v)-V(H)$.  The signs of all edges remain the same.  Denote the resulting signed graph by $(G', \sigma)$. By (I), $(G', \sigma)$ contains two edge-disjoint unbalanced triangles. Since $G'$ is connected and bridgeless, by Proposition~\ref{flow admissible},  $(G', \sigma)$ is flow-admissible. Obviously $\beta(G')<\beta(G)$ and $G'$ remains triangularly connected.   By the minimality of $\beta(G)$,  $(G',\sigma)$ admits a  4-NZF $f$. Identifying $v'$ and $v''$, one can easily obtain a 4-NZF of $(G,\sigma)$, a contradiction.  Therefore $G$ is locally connected. \hfill $\Box$

\medskip \noindent
{\bf (III)} {\em
 $(G,\sigma)$  does not  contain any of the $11$ configurations in  Figure \ref{FIG: confi}.}

 \medskip \noindent
{\bf Proof of (III).}
For a balanced circuit  or a short barbell $C$, denote  by $\chi(C)$ a $2$-flow of $(G,\sigma)$ with $\supp(\chi(C)) = E(C)$ guaranteed by Lemma~\ref{signed circuit-flow}.  In the following argument, all cases only involve  one $\chi(C)$  except one which involves  three balanced circuits with one common edge. Thus without loss of generality, we assume that  $\chi(C)$ is a nonnegative $2$-flow.

\medskip
Take $T= T_1$ if $(G,\sigma)$ contains $FC_i$ if  $i \in \{1,2,3,9,10\}$, $T=T_2$ if $(G,\sigma)$ contains $FC_{4}$ or $FC_{11}$, and  $T= T_3$ if $(G,\sigma)$ contains $FC_i$ if $i \in \{5,6,7,8\}$.

  Since in $FC_1$ or $FC_2$, $E(T_1)$ is contained in two edge-disjoint balanced circuits of length at most $4$, a contradiction to Fact A.  This proves that $(G,\sigma)$ does not contain $FC_{1}$ or $FC_2$.

  In $FC_3$,  any two edges in $T_1$ are contained in a balanced $4$-circuit, thus by Fact A, $E_{\phi = 0} = E(T_1)$. Let $C= T_2\bigtriangleup T_3$. Then $C$ is a  balanced $4$-circuit and  contains the  two edges $uv_1$ and $uv_2$.
   Let $\phi_1 = \phi + \phi(v_2v_3) \chi(C)$.
  Then $\phi_1$ is a $\mathbb Z_3$-flow such that $E_{\phi_1=0} \subseteq E(T_1\bigtriangleup T_3)$. This contradicts Fact A  since $T_1\bigtriangleup T_3$ is a balanced $4$-circuit. This proves that $(G,\sigma)$ does not contain $FC_{3}$.

Similarly, in $FC_{4}$, by Fact A, $E_{\phi = 0} = E(T_2)$. Let $C=T_2\bigtriangleup T_3$ which is a balanced $4$-circuit and let $\phi_1 = \phi + \phi(v_4v_5) \chi(C)$.
  Then $\phi_1$ is a $\mathbb Z_3$-flow such that $E_{\phi_1=0} \subseteq \{v_3v_4,v_3v_5\}\subseteq E(T_3\bigtriangleup T_4)$. This contradicts Fact  A since $T_3\bigtriangleup T_4$ is a balanced $4$-circuit. This proves that $(G,\sigma)$ does not contain $FC_{4}$.

 Suppose that $G$ contains $FC_i$ for some $i=5,6,7,8$.  By Fact A,   $\phi(v_4v_5) = 0$  in $FC_5$ and in $FC_i$ where $i=6,7,8$,  $\phi(v_3v_5) = 0$.  Let $C= T_2\bigtriangleup T_3$, which is a balanced $4$-circuit.  Let $\phi_1 = \phi + \phi(v_2v_3) \chi(C)$. Then $\phi_1$ is a $\mathbb Z_3$-flow such that $E_{\phi_1=0} \subseteq E(T_1\bigtriangleup T_2) \cup E(T_4)$ when $i = 5,6$  and $E_{\phi_1=0} \subseteq E(T_1\bigtriangleup T_2) \cup E(T_4\bigtriangleup T_5)$ when $i = 7,8$.  In the former case,  $T_1\bigtriangleup T_2$  is a balanced $4$-circuit and $T_4$ is a balanced $3$-circuit and they are edg-disjoint. In the latter case,  $T_1\bigtriangleup T_2$ and  $T_4\bigtriangleup T_5$ are edge-disjoint balanced $4$-circuits.  This contradicts Fact A and thus proves  that $(G,\sigma)$ does not contain $FC_{i}$ for each $i=5,6,7,8$.

  Now we consider the case when $(G,\sigma)$ contains $FC_9$.  Similar to the above argument, we have  $\phi(v_1v_3) = \phi(v_2v_3)= 0$.

     If $\phi(v_1v_2) = 0$, let  $\phi_1 = \phi + \phi(v_3v_5) \chi(C)$ where $C= T_1\bigtriangleup T_4$ is a balanced $4$-circuit. Then $\phi_1$ is a $\mathbb Z_3$-flow such that $E_{\phi_1=0} \subseteq E(T_3\bigtriangleup T_4)$.  This contradicts Fact  A  since $T_3\bigtriangleup T_4$ is a balanced $4$-circuit.

   Now we further assume $\phi(v_1v_2) = \alpha \not =  0$. Note  that $C= T_1\cup T_3$ is a short barbell. If  one of $\phi(v_3v_4)$ and $\phi(v_3v_5)$ is not equal to $-\alpha$, without loss of generality, assume $\phi(v_3v_4)\not = -\alpha$.  Let $\phi_1 = \phi + \alpha \chi(C)$.  Then $\phi_1$ is a $\mathbb Z_3$-flow such that $E_{\phi_1=0} \subseteq \{v_3v_5,v_5v_4\} \subseteq E(T_2\bigtriangleup T_3)$.  This contradicts Fact A since $T_2\bigtriangleup T_3$ is a balanced $4$-circuit.   If   $\phi(v_3v_4) = \phi(v_3v_5) = -\alpha$, let $\phi_1 =  \phi - \alpha \chi(C)$. Then $\phi_1$ is a $\mathbb Z_3$-flow such that $E_{\phi_1=0} \subseteq \{v_1v_2,v_5v_4\}$, a contradiction to Fact A again since $v_1v_2$ and $v_5v_4$ are contained in the  balanced $4$-circuit $v_1v_2v_4v_5v_1$. This proves that $(G,\sigma)$ does not contain $FC_{9}$.

 Suppose that $(G,\sigma)$ contains $FC_{10}$. Similarly as before we have that   $\phi(v_1v_3) = 0$ and  at least one of  $\phi(v_2v_1) $ and  $\phi(v_2v_3)$ is $0$. Let  $\phi_1 = \phi + \phi(v_4v_5) \chi(C)$ where $C=T_1\cup T_3$ is a short barbell. Then $\phi_1$ is a $\mathbb Z_3$-flow such that $E_{\phi_1=0} \subseteq  E(T_1\bigtriangleup T_4) \cup E(T_2)$. Since $T_1\bigtriangleup T_4$ is a balanced $4$-circuit,   $T_2$ is a balanced triangle, and they share one common edge, by Lemma~\ref{E0:34-circuits}, $(G,\sigma)$ admits a $4$-NZF, a contradiction. Thus $(G,\sigma)$ does not contain $FC_{10}$.

   Finally suppose that $(G,\sigma)$ contains  $FC_{11}$.  Denote $C_1 = T_1\bigtriangleup T_2$,   $C_2 = T_2\bigtriangleup T_3$, and $C_3 = T_4$. Note that  $C_1,C_2,C_3$ are all balanced circuits sharing a common edge $v_2v_4$.

   \begin{claim}
   \label{CL:3-flow}
   There is a $3$-flow $f$ such that $v_2v_4 \in E_{f=0} \subseteq E(C_1)\cup E(C_2)$ and $|E_{f=0} \cap E(C_i)|\geq 2$ for each $i = 1,2$.
   \end{claim}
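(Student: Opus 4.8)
The plan is to first construct a $\mathbb{Z}_3$-flow $\phi'$ of $(G,\sigma)$ whose zero set contains $v_2v_4$, lies inside $E(C_1)\cup E(C_2)$, meets each $C_i$ in at least two edges, and consists of at most four edges, and then invoke Lemma~\ref{4-edges:mod3-to-3} to replace $\phi'$ by an integer $3$-flow $f$ with $\supp(f)=\supp(\phi')$; since $E_{f=0}=E_{\phi'=0}$ this $f$ is exactly the flow demanded by the claim. The starting object is the $\mathbb{Z}_3$-flow $\phi$ with $E_{\phi=0}\subseteq E(T_2)$ (here $T=T_2$) furnished by Lemma~\ref{2-operation-2}. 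Write $e_{12}$ for the common edge of $T_1$ and $T_2$ and $e_{23}$ for the common edge of $T_2$ and $T_3$, so that $E(T_2)=\{e_{12},e_{23},v_2v_4\}$, $E(C_1)\cap E(T_2)=\{e_{23},v_2v_4\}$ and $E(C_2)\cap E(T_2)=\{e_{12},v_2v_4\}$.

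First I would pin down the forced zeros. If $\phi(e_{12})\neq0$, then $E_{\phi=0}\subseteq\{e_{23},v_2v_4\}\subseteq E(C_1)$, contradicting Fact A applied to the balanced $4$-circuit $C_1$; hence $\phi(e_{12})=0$, and symmetrically $\phi(e_{23})=0$ by applying Fact A to $C_2$. Thus the two edges of $T_2$ shared with its neighbours are already zero, and $E_{\phi=0}\subseteq\{e_{12},e_{23},v_2v_4\}$.

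If $\phi(v_2v_4)=0$ the claim follows at once: $E_{\phi=0}=E(T_2)$ contains $v_2v_4$, lies in $E(C_1)\cup E(C_2)$, and meets $C_1$ in $\{e_{23},v_2v_4\}$ and $C_2$ in $\{e_{12},v_2v_4\}$, so with $|E_{\phi=0}|=3\le4$ Lemma~\ref{4-edges:mod3-to-3} applies. Suppose instead $\phi(v_2v_4)\neq0$, so $E_{\phi=0}=\{e_{12},e_{23}\}$; now I must zero $v_2v_4$ as well while keeping at least two zeros in each $C_i$ and no zero outside $E(C_1)\cup E(C_2)$. This is where the three balanced circuits $C_1,C_2,C_3$ through $v_2v_4$ are used: each $\chi(C_i)$ is nonzero on $v_2v_4$, so there is a unique $\alpha_i\in\mathbb{Z}_3$ with $(\phi-\alpha_i\chi(C_i))(v_2v_4)=0$. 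The move $\chi(C_3)$ is the natural first choice, for $\supp(\chi(C_3))=E(T_4)$ avoids $e_{12}$ and $e_{23}$: it retains both shared zeros, hence two zeros in each $C_i$, and its only risk is to create a zero on one of the two edges of $T_4$ other than $v_2v_4$, which would lie outside $E(C_1)\cup E(C_2)$. When no such zero appears, $E_{\phi'=0}=E(T_2)$ and we finish as before. In the remaining degenerate case I would switch to $\chi(C_1)$, which is supported in $E(C_1)$: it preserves the zero $e_{12}$ (so $C_2$ still carries the two zeros $\{e_{12},v_2v_4\}$) and can create new zeros only inside $E(C_1)$, hence succeeds as soon as a second zero appears in $C_1$; the flow $\chi(C_2)$ plays the symmetric role for $C_2$.

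The main obstacle is this last step: proving that the moves $\chi(C_1),\chi(C_2),\chi(C_3)$ cannot all fail, i.e. that one can always zero $v_2v_4$ without dropping a circuit below two zeros and without planting a zero outside $E(C_1)\cup E(C_2)$. This is precisely the feature that distinguishes $FC_{11}$ from the earlier configurations, each of which required only a single auxiliary balanced circuit: the three balanced circuits meeting in the single edge $v_2v_4$ provide exactly the extra freedom that forces an admissible move to exist. Once such a $\phi'$ is obtained we have $|E_{\phi'=0}|\le4$, and a single application of Lemma~\ref{4-edges:mod3-to-3} produces the required integer $3$-flow $f$.
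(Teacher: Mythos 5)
Your framework is the right one---produce a $\mathbb Z_3$-flow $\phi'$ with $v_2v_4\in E_{\phi'=0}\subseteq E(C_1)\cup E(C_2)$, $|E_{\phi'=0}\cap E(C_i)|\ge 2$ and $|E_{\phi'=0}|\le 4$, then apply Lemma~\ref{4-edges:mod3-to-3}---and your forced zeros $\phi(v_2v_3)=\phi(v_3v_4)=0$ and the case $\phi(v_2v_4)=0$ are handled correctly. But in the main case $\phi(v_2v_4)=a\neq 0$ you leave the decisive step unproved: you assert, with no argument, that the three moves $\phi-a\chi(C_1)$, $\phi-a\chi(C_2)$, $\phi-a\chi(C_3)$ cannot all fail because the three circuits through $v_2v_4$ ``provide exactly the extra freedom.'' This is not self-evident. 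The move on $C_3=T_4$ fails exactly when $a$ is the killing value of one of $v_2v_6,v_4v_6$; the move on $C_1$ fails exactly when $a$ is the killing value of neither $v_1v_2$ nor $v_1v_3$; the move on $C_2$ fails exactly when $a$ is the killing value of neither $v_4v_5$ nor $v_3v_5$. These three conditions involve pairwise disjoint edge sets, so no pigeonhole or counting argument excludes simultaneous failure; some structural input about $\phi$ is required, and your proposal supplies none.

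The paper closes precisely this hole with one more application of Fact A, and in doing so shows that $C_1$ alone suffices (the triangle $T_4$ is not needed for this claim; it is used only afterwards to finish the $FC_{11}$ case). Namely, if the killing values of $v_1v_2$ and $v_1v_3$ coincided, say both equal to $b$, then $\phi+b\chi(C_1)$ would be a $\mathbb Z_3$-flow whose zero set is contained in $\{v_2v_3,v_2v_4\}$, a subset of the single balanced $4$-circuit $C_2$, contradicting Fact A. Hence those two killing values are the two distinct nonzero elements of $\mathbb Z_3$, so one of them equals $a$, and $\phi_2=\phi-a\chi(C_1)$ zeroes $v_2v_4$ together with exactly one of $v_1v_2,v_1v_3$ while keeping $v_2v_3$ zero; this gives $|E_{\phi_2=0}|=3$ with two zeros on each of $C_1,C_2$, and Lemma~\ref{4-edges:mod3-to-3} finishes. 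You need an argument of this kind (or an equivalent one for $C_2$ or $C_3$) before your case analysis is a proof.
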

   \vspace{-0.6cm}
  \begin{proof} With a similar argument as before, we have
    $\phi(v_2v_3) = \phi(v_3v_4) = 0$.
    If $\phi(v_2v_4) =0$,
   then by  Lemma \ref{4-edges:mod3-to-3},   let $f$ be a $3$-flow  with $\supp(f)=\supp(\phi)$ which is a desired $3$-flow.

   Assume $\phi(v_2v_4) = a \not = 0$.  If $\phi(v_1v_2) = \phi(v_1v_3) = b$, let $\phi_1 = \phi + b\chi(C_1)$. Then $E_{\phi_1=0} = \{v_2v_3, v_2v_4\}\subseteq E(C_1)$, a contradiction to Fact A. Thus $\phi(v_1v_2) \not = \phi(v_1v_3)$. Then $a \in \{\phi(v_1v_2), \phi(v_1v_3)\}$.  Let $\phi_2 = \phi - a \chi(C_1)$.  Then $v_2v_4\in E_{\phi_2 = 0}$ and $|E_{\phi_2=0}\cap E(C_i)| = 2$ for each $i =1, 2$. By  Lemma \ref{4-edges:mod3-to-3},   let $g$ be the corresponding $3$-flow of $\phi_2$ with $\supp(g)=\supp(\phi_2)$ which is a desired $3$-flow.
  This prove the claim.
   \end{proof}
 \vspace{-0.3cm}

  Let $f$ be a $3$-flow described in Claim~\ref{CL:3-flow}.
 Note $|\{\pm 1,\pm 2\}\setminus f(C_i)|\ge 2$ for each $i=1,2$.

 If $\{1,-1\}\setminus f(C_i) \not = \emptyset$, take $\alpha_i \in \{1,-1\}\setminus f(C_i)$. Otherwise $f(C_i) = \{0, 1,-1\}$ and  take $\alpha_i \in \{2,-2\}$. In the case when both $|\alpha_1| = |\alpha_2| =2$, we choose $\alpha_1 = 2$ and $\alpha_2=-2$. Then $g = f + \alpha_1 \chi(C_1) + \alpha_2 \chi(C_2)$ is a $4$-flow  such that $E_{g=\pm3} \subseteq E(C_1)\cup E(C_2)$ and $E_{g=0} \subseteq \{v_2v_4\}$.  Since $(G,\sigma)$ does not admit a $4$-NZF, $g(v_2v_4) = 0$.  Since $T_4$ is a balanced triangle and $|g(e)|\leq 2$ for each $e\in E(T_4)$, one can extend $g$ to be a $4$-NZF of $(G,\sigma)$, a contradiction.   This  proves that  $(G,\sigma)$   does not contain $FC_{11}$ and thus completes the proof of  (III).
 \hfill $\Box$

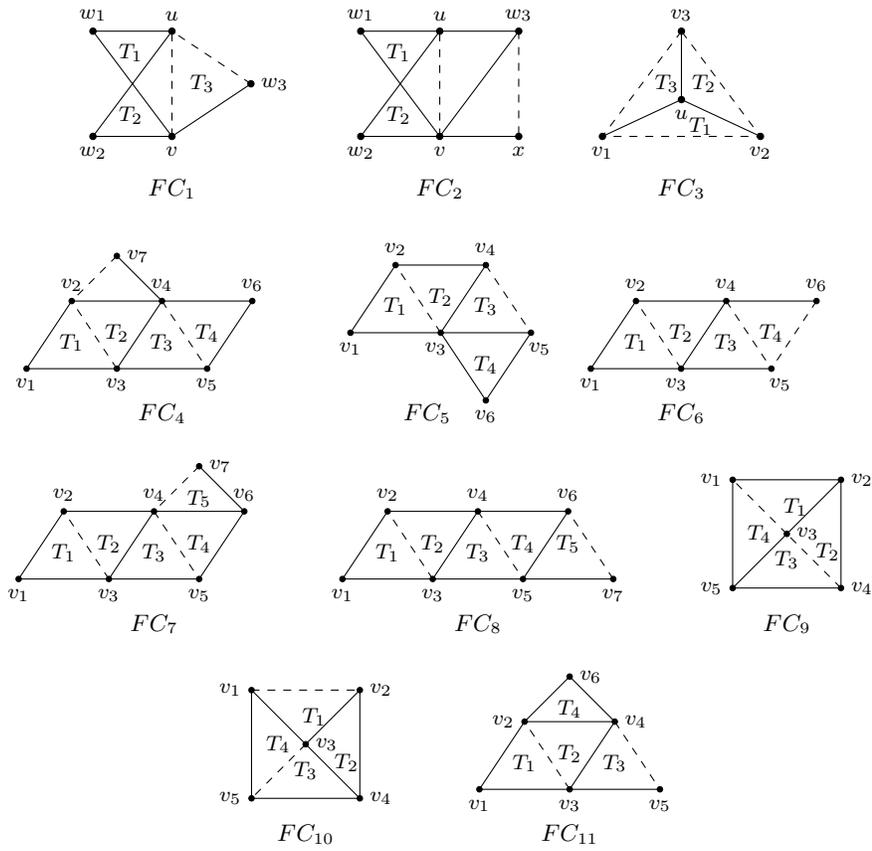
\begin{figure}
\begin{center}
\begin{tikzpicture}[scale=0.7]
\draw [fill=black] (0,2) circle (0.06cm);
\draw [fill=black] (0,0) circle (0.06cm);
\draw [fill=black] (-1.5,2) circle (0.06cm);
\draw [fill=black] (-1.5,0) circle (0.06cm);
\draw [fill=black] (1.5,1) circle (0.06cm);
\draw[dashed] (0,0)--(0,2); \draw (-1.5,2)--(0,2)--(-1.5,0)--(0,0)--(-1.5,2);
\draw (0,0)--(1.5,1);\draw[dashed] (1.5,1)--(0,2);
\node[above] at (0,2) {\scriptsize $u$};\node[below] at (0,0) {\scriptsize $v$};
\node[above] at (-1.5,2) {\scriptsize $w_1$};\node[below] at (-1.5,0) {\scriptsize $w_2$};
\node[right] at (1.5,1) {\scriptsize $w_3$};
\node[right] at (-1.2,1.6) {\scriptsize $T_1$};\node[right] at (-1.2,0.4) {\scriptsize $T_2$};
\node[left] at (1,1) {\scriptsize $T_3$};
\node at (0,-1){ \footnotesize $FC_1$};
\end{tikzpicture}\hspace{0.4cm}
\begin{tikzpicture}[scale=0.7]
\draw [fill=black] (0,2) circle (0.06cm);
\draw [fill=black] (0,0) circle (0.06cm);
\draw [fill=black] (-1.5,2) circle (0.06cm);
\draw [fill=black] (-1.5,0) circle (0.06cm);
\draw [fill=black] (1.5,2) circle (0.06cm);
\draw [fill=black] (1.5,0) circle (0.06cm);
\draw[dashed] (0,0)--(0,2); \draw (-1.5,2)--(0,2)--(-1.5,0)--(0,0)--(-1.5,2);
\draw (0,2)--(1.5,2);\draw[dashed] (1.5,2)--(1.5,0);\draw (1.5,2)--(0,0)--(1.5,0);
\node[above] at (0,2) {\scriptsize $u$};\node[below] at (0,0) {\scriptsize $v$};
\node[above] at (-1.5,2) {\scriptsize $w_1$};\node[below] at (-1.5,0) {\scriptsize $w_2$};
\node[above] at (1.5,2) {\scriptsize $w_3$};\node[below] at (1.5,0) {\scriptsize $x$};
\node[right] at (-1.2,1.6) {\scriptsize $T_1$};\node[right] at (-1.2,0.4) {\scriptsize $T_2$};
\node at (0,-1){\footnotesize $FC_2$};
\end{tikzpicture}\hspace{0.4cm}
\begin{tikzpicture}[scale=0.7]
\draw [fill=black] (-1.5,0) circle (0.06cm);
\draw [fill=black] (1.5,0) circle (0.06cm);
\draw [fill=black] (0,2) circle (0.06cm);
\draw [fill=black] (0,0.7) circle (0.06cm);
\draw[dashed] (0,2)--(-1.5,0)--(1.5,0)--(0,2);
\draw(0,0.7)--(0,2) (0,0.7)--(-1.5,0) (0,0.7)--(1.5,0);
\node[below] at (-1.5,0) {\scriptsize $v_1$};\node[below] at (1.5,0) {\scriptsize $v_2$};
\node[above] at (0,2) {\scriptsize $v_3$};\node[below] at (0,0.7) {\scriptsize $u$};
\node[right] at (-0.7,1) {\scriptsize $T_3$};\node[right] at (-0.05,0.2) {\scriptsize $T_1$};
\node[left] at (0.85,1) {\scriptsize $T_2$};
\node at (0,-1){\footnotesize $FC_3$};
\end{tikzpicture}
\hspace{0.4cm}

\vspace{0.3cm}

\begin{tikzpicture}[scale=0.6]
\draw [fill=black] (-2,0) circle (0.06cm);
\draw [fill=black] (-1,1.5) circle (0.06cm);
\draw [fill=black] (0,0) circle (0.06cm);
\draw [fill=black] (1,1.5) circle (0.06cm);
\draw [fill=black] (2,0) circle (0.06cm);
\draw [fill=black] (3,1.5) circle (0.06cm);
\draw [fill=black] (0,2.5) circle (0.06cm);
\draw (0,0)--(-2,0)--(-1,1.5)--(1,1.5)--(0,0);\draw[dashed] (0,0)--(-1,1.5);
\draw[dashed] (1,1.5)--(2,0);
\draw (2,0)--(0,0);\draw (3,1.5)--(2,0);\draw(3,1.5)--(1,1.5);
\draw(0,2.5)--(1,1.5);\draw[dashed](0,2.5)--(-1,1.5);
\node[below] at (-2,0) {\scriptsize $v_1$};\node[above] at (-1,1.5) {\scriptsize $v_2$};
\node[below] at (0,0) {\scriptsize $v_3$};\node[above] at (1,1.5) {\scriptsize $v_4$};
\node[below] at (2,0) {\scriptsize $v_5$};\node[above] at (3,1.5) {\scriptsize $v_6$};
\node[right] at (0,2.5) {\scriptsize $v_7$};
\node[below] at (-1,1) {\scriptsize $T_1$};
\node[below] at (0,1.2) {\scriptsize $T_2$};
\node[below] at (1,1) {\scriptsize $T_3$};
\node[below] at (2,1.2) {\scriptsize $T_4$};
\node at (1,-1){\footnotesize $FC_{4}$};
\end{tikzpicture}
\hspace{0.5cm}
\begin{tikzpicture}[scale=0.6]
\draw [fill=black] (-2,0) circle (0.06cm);
\draw [fill=black] (-1,1.5) circle (0.06cm);
\draw [fill=black] (0,0) circle (0.06cm);
\draw [fill=black] (1,1.5) circle (0.06cm);
\draw [fill=black] (2,0) circle (0.06cm);
\draw [fill=black] (1,-1.5) circle (0.06cm);
\draw (0,0)--(-2,0)--(-1,1.5)--(1,1.5)--(0,0);\draw[dashed] (0,0)--(-1,1.5);
\draw[dashed] (1,1.5)--(2,0);
\draw (2,0)--(0,0);
\draw (1,-1.5)--(0,0);\draw(1,-1.5)--(2,0);
\node[below] at (-2,0) {\scriptsize $v_1$};\node[above] at (-1,1.5) {\scriptsize $v_2$};
\node[below] at (-0.1,0) {\scriptsize $v_3$};\node[above] at (1,1.5) {\scriptsize $v_4$};
\node[below] at (2.2,0) {\scriptsize $v_5$};
\node[below] at (1,-1.5) {\scriptsize $v_6$};
\node[below] at (-1,1) {\scriptsize $T_1$};
\node[below] at (0,1.2) {\scriptsize $T_2$};
\node[below] at (1,1) {\scriptsize $T_3$};
\node[below] at (1,-0.3) {\scriptsize $T_4$};
\node at (-0.3,-1.8){ \footnotesize $FC_{5}$};
\end{tikzpicture}
\begin{tikzpicture}[scale=0.6]
\draw [fill=black] (-2,0) circle (0.06cm);
\draw [fill=black] (-1,1.5) circle (0.06cm);
\draw [fill=black] (0,0) circle (0.06cm);
\draw [fill=black] (1,1.5) circle (0.06cm);
\draw [fill=black] (2,0) circle (0.06cm);
\draw [fill=black] (3,1.5) circle (0.06cm);
\draw (0,0)--(-2,0)--(-1,1.5)--(1,1.5)--(0,0);\draw[dashed] (0,0)--(-1,1.5);
\draw[dashed] (1,1.5)--(2,0);
\draw (2,0)--(0,0);\draw[dashed] (3,1.5)--(2,0);\draw(3,1.5)--(1,1.5);
\node[below] at (-2,0) {\scriptsize $v_1$};\node[above] at (-1,1.5) {\scriptsize $v_2$};
\node[below] at (-0.1,0) {\scriptsize $v_3$};\node[above] at (1,1.5) {\scriptsize $v_4$};
\node[below] at (2.2,0) {\scriptsize $v_5$};\node[above] at (3,1.5) {\scriptsize $v_6$};
\node[below] at (-1,1) {\scriptsize $T_1$};
\node[below] at (0,1.2) {\scriptsize $T_2$};
\node[below] at (1,1) {\scriptsize $T_3$};
\node[below] at (2,1.2) {\scriptsize $T_4$};
\node at (0,-1){\footnotesize $FC_6$};
\end{tikzpicture}
\hspace{0.5cm}

\vspace{0.2cm}
\begin{tikzpicture}[scale=0.6]
\draw [fill=black] (-2,0) circle (0.06cm);
\draw [fill=black] (-1,1.5) circle (0.06cm);
\draw [fill=black] (0,0) circle (0.06cm);
\draw [fill=black] (1,1.5) circle (0.06cm);
\draw [fill=black] (2,0) circle (0.06cm);
\draw [fill=black] (3,1.5) circle (0.06cm);
\draw [fill=black] (2,2.5) circle (0.06cm);
\draw (0,0)--(-2,0)--(-1,1.5)--(1,1.5)--(0,0);\draw[dashed] (0,0)--(-1,1.5);
\draw[dashed] (1,1.5)--(2,0);
\draw (2,0)--(0,0);\draw (3,1.5)--(2,0);\draw(3,1.5)--(1,1.5);
\draw(2,2.5)--(3,1.5);\draw[dashed](2,2.5)--(1,1.5);
\node[below] at (-2,0) {\scriptsize $v_1$};\node[above] at (-1,1.5) {\scriptsize $v_2$};
\node[below] at (0,0) {\scriptsize $v_3$};\node[above] at (1,1.5) {\scriptsize $v_4$};
\node[below] at (2,0) {\scriptsize $v_5$};\node[above] at (3,1.5) {\scriptsize $v_6$};
\node[right] at (2,2.5) {\scriptsize $v_7$};
\node[below] at (-1,1) {\scriptsize $T_1$};
\node[below] at (0,1.2) {\scriptsize $T_2$};
\node[below] at (1,1) {\scriptsize $T_3$};
\node[below] at (2,1.2) {\scriptsize $T_4$};
\node[below] at (2,2.2) {\scriptsize $T_5$};
\node at (1,-1){ \footnotesize $FC_7$};
\end{tikzpicture}
\hspace{0.5cm}
\begin{tikzpicture}[scale=0.6]
\draw [fill=black] (-2,0) circle (0.06cm);
\draw [fill=black] (-1,1.5) circle (0.06cm);
\draw [fill=black] (0,0) circle (0.06cm);
\draw [fill=black] (1,1.5) circle (0.06cm);
\draw [fill=black] (2,0) circle (0.06cm);
\draw [fill=black] (3,1.5) circle (0.06cm);
\draw [fill=black] (4,0) circle (0.06cm);
\draw (0,0)--(-2,0)--(-1,1.5)--(1,1.5)--(0,0);\draw[dashed] (0,0)--(-1,1.5);
\draw[dashed] (1,1.5)--(2,0);
\draw (2,0)--(0,0);\draw (3,1.5)--(2,0);\draw(3,1.5)--(1,1.5);
\draw(4,0)--(2,0);\draw[dashed](4,0)--(3,1.5);
\node[below] at (-2,0) {\scriptsize $v_1$};\node[above] at (-1,1.5) {\scriptsize $v_2$};
\node[below] at (0,0) {\scriptsize $v_3$};\node[above] at (1,1.5) {\scriptsize $v_4$};
\node[below] at (2,0) {\scriptsize $v_5$};\node[above] at (3,1.5) {\scriptsize $v_6$};
\node[below] at (4,0) {\scriptsize $v_7$};
\node[below] at (-1,1) {\scriptsize $T_1$};
\node[below] at (0,1.2) {\scriptsize $T_2$};
\node[below] at (1,1) {\scriptsize $T_3$};
\node[below] at (2,1.2) {\scriptsize $T_4$};
\node[below] at (3,1.2) {\scriptsize $T_5$};
\node at (1,-1){ \footnotesize $FC_8$};
\end{tikzpicture}
\hspace{0.5cm}
\begin{tikzpicture}[scale=0.6]
\draw [fill=black] (0,1) circle (0.06cm);
\draw [fill=black] (1.2,2.2) circle (0.06cm);
\draw [fill=black] (-1.2,2.2) circle (0.06cm);
\draw [fill=black] (-1.2,-0.2) circle (0.06cm);
\draw [fill=black] (1.2,-0.2) circle (0.06cm);
\draw[dashed] (0,1)--(-1.2,2.2); \draw (-1.2,2.2)--(0,2.2)--(1.2,2.2)--(0,1);
\draw (0,1)--(-1.2,-0.2)--(1.2,-0.2)--(1.2,2.2);\draw[dashed] (1.2,-0.2)--(0,1);\draw (-1.2,2.2)--(-1.2,-0.2);
\node[right] at (0,1) {\scriptsize  $v_3$};
\node[left] at (-1.2,2.2) {\scriptsize $v_1$};\node[right] at (1.2,2.2) {\scriptsize $v_2$};
\node[right] at (1.2,-0.2) {\scriptsize $v_4$};\node[left] at (-1.2,-0.2) {\scriptsize $v_5$};
\node[right] at (-1.1,1) {\scriptsize $T_4$};
\node[right] at (-0.3,1.6) {\scriptsize $T_1$};
\node[left] at (1.4,0.6) {\scriptsize $T_2$};
\node[above] at (0,0) {\scriptsize $T_3$};
\node at (0,-1){\footnotesize $FC_9$};
\end{tikzpicture}
\vspace{0.2cm}

\begin{tikzpicture}[scale=0.6]
\draw [fill=black] (0,1) circle (0.06cm);
\draw [fill=black] (1.2,2.2) circle (0.06cm);
\draw [fill=black] (-1.2,2.2) circle (0.06cm);
\draw [fill=black] (-1.2,-0.2) circle (0.06cm);
\draw [fill=black] (1.2,-0.2) circle (0.06cm);
\draw (0,1)--(-1.2,2.2) (1.2,2.2)--(0,1); \draw[dashed] (-1.2,2.2)--(1.2,2.2);
\draw[dashed] (0,1)--(-1.2,-0.2);\draw (1.2,-0.2)--(0,1);\draw(-1.2,2.2)--(-1.2,-0.2)--(1.2,-0.2)--(1.2,2.2);
\node[right] at (0,1) {\scriptsize $v_3$};
\node[left] at (-1.2,2.2) {\scriptsize $v_1$};\node[right] at (1.2,2.2) {\scriptsize $v_2$};
\node[right] at (1.2,-0.2) {\scriptsize $v_4$};\node[left] at (-1.2,-0.2) {\scriptsize $v_5$};
\node[right] at (-1.1,1) {\scriptsize $T_4$};
\node[right] at (-0.3,1.6) {\scriptsize $T_1$};
\node[left] at (1.4,0.6) {\scriptsize $T_2$};
\node[above] at (0,0) {\scriptsize $T_3$};
\node at (0,-1){\footnotesize$FC_{10}$};
\end{tikzpicture}
\hspace{0.5cm}
\begin{tikzpicture}[scale=0.6]
\draw [fill=black] (-2,0) circle (0.06cm);
\draw [fill=black] (-1,1.5) circle (0.06cm);
\draw [fill=black] (0,0) circle (0.06cm);
\draw [fill=black] (1,1.5) circle (0.06cm);
\draw [fill=black] (2,0) circle (0.06cm);
\draw [fill=black] (0,2.5) circle (0.06cm);
\draw (0,0)--(-2,0)--(-1,1.5)--(1,1.5)--(0,0);\draw[dashed] (0,0)--(-1,1.5);
\draw[dashed] (1,1.5)--(2,0);
\draw (2,0)--(0,0); \draw (0,2.5)--(-1,1.5) (0,2.5)--(1,1.5);
\node[below] at (-2,0) {\scriptsize $v_1$};\node[left] at (-1,1.5) {\scriptsize $v_2$};
\node[below] at (0,0) {\scriptsize $v_3$};\node[right] at (1,1.5) {\scriptsize $v_4$};
\node[below] at (2,0) {\scriptsize $v_5$};\node[right] at (0,2.5) {\scriptsize $v_6$};
\node[below] at (0,2.2) {\scriptsize $T_4$};
\node[below] at (-1,1) {\scriptsize $T_1$};
\node[below] at (0,1.2) {\scriptsize $T_2$};
\node[below] at (1,1) {\scriptsize $T_3$};
\node at (0,-1){\footnotesize $FC_{11}$};
\end{tikzpicture}
\end{center}
\caption{\small Forbidden configurations: the dotted lines are negative edges.}
\label{FIG: confi}
\end{figure}

\medskip \noindent
{\bf (IV)} {\em  There is no triangle-path $T_1T_2\cdots T_m$  in $(G,\sigma)$ such that $m\geq 3$,   $T_1$ and $T_m$ are unbalanced, and  $T_i$ is balanced for each $i \in \{2,\dots, m-1\}$.}

\medskip \noindent
{\bf Proof of (IV).}  Suppose  to the contrary that there is a triangle-path $H= T_1T_2\cdots T_m$  such that $m\geq 3$,  $T_1$ and $T_m$ are unbalanced and $T_i$ is balanced for each $i = \{2,\dots, m-1\}$.  Denote by $H' = T_2\cdots T_{m-1}$.
Denote $E(T_1) = \{e_1,e_2,e_3\}$ and $E(T_m) = \{e_4,e_5,e_6\}$ where $e_3 \in E(T_1)\cap E(T_2)$ and $e_6 \in E(T_m)\cap E(T_{m-1})$. Let $x$  be the common endvertex of $e_1$ and $e_2$ and $y$ be the common endvertex of $e_4$ and $e_5$.  Let $C=T_1\bigtriangleup T_2\bigtriangleup \cdots \bigtriangleup T_m$. Then $C$ is a balanced circuit containing $e_i$ for each $i =1,2,4,5$.

Take  $T = T_1$. Then $E_{\phi=0} \subseteq E(T_1)$. Since $e_3$ belongs to the  balanced triangle $T_2$, by Lemma \ref{E0:2-circuits}, either $\phi(e_1) = 0$ or $\phi(e_2) = 0$.

 If $d(x) \geq 3$,  there is a triangle $T_0$ such that $T_0$ and $T_1$ share exactly one  of $e_1$ and $e_2$ since by (II), $G$ is locally connected.  Let $C_1 = T_0$ if $T_0$ is balanced otherwise let $C_1 = T_0\bigtriangleup T_1$ which is a balanced $4$-circuit.  Without loss of generality assume $e_1 \in E(C_1)$.

 Similarly if $d(y) \geq 3$, there is a triangle $T_{m+1}$ such that $T_{m+1}$ and $T_m$ share exactly one  of $e_4$ and $e_5$.  Let $C_2 = T_{m+1}$ if $T_{m+1}$ is balanced otherwise let $C_2 = T_{m+1}\bigtriangleup T_m$ which is a balanced $4$-circuit.  Without loss of generality assume $e_4 \in E(C_2)$.

 Let $\alpha = \phi(e_5)$ and  $\phi_1 = \phi + \alpha \chi(C)$.

 We first show $\phi(e_1) \not = \phi(e_2)$.  Suppose the contradiction that  $\phi(e_1) = \phi(e_2)$. Then  $\phi(e_1) = \phi(e_2) = 0$ and thus $E_{\phi_1=0} \subseteq E(H') \cup \{e_4\}$.

 If $\phi_1(e_4) \not = 0$, then $E_{\phi_1=0} \subseteq E(H')$. By  Lemma~\ref{shift-0}, there is a $Z_3$-flow $\phi_2$ such that $E_{\phi_2=0} \subseteq  \{e_6\}$, a contradiction to Fact  A.

  If $\phi_1(e_4) = 0$, then $\phi(e_4) \not = \phi(e_5)$. This implies $d(y) \geq 3$ and thus $C_2$ exists. If $E(C_2) \cap E(H') \not = \emptyset$, let $e_0\in E(C_2) \cap E(H')$. Otherwise, let $e_0 = e_6$.   By Lemma~\ref{shift-0}, there is a $Z_3$-flow $\phi_3$ such that $E_{\phi_3=0} \subseteq  \{e_0, e_4\} \subseteq E(C_2)$, a contradiction to Fact A since  $C_2$ is a balanced circuit of length at most $4$. This shows that  $\phi(e_1) \not = \phi(e_2)$, which implies $d(x) \geq 3$. By symmetry, we also have $d(y) \geq 3$. Therefore both $C_1$ and $C_2$ exist.

 Since $e_1\in E(C_1)$ and $e_3 \in E(T_2)$, we have $\phi(e_2) = 0$. Then $E_{\phi_1= 0} \subseteq E(H') \cup \{e_1,e_4\}$.  If $(E(C_1)\cup E(C_2))\cap E(H') \not = \emptyset$, let $e_7$ be an edge in $(E(C_1)\cup E(C_2))\cap E(H')$. Otherwise let $e_7 = e_3$. By Lemma~\ref{shift-0},  one can obtained a  $Z_3$-flow $\phi_4$ from $\phi_1$ such that $E_{\phi_4=0} \subseteq  \{e_1, e_4, e_7\}$. Note that if $C_i$ is a circuit of length  $4$ for some $i = 1,2$, then $e_7  \in E(C_i)\cap E(H')$.

If $C_1$ and $C_2$ are edge-disjoint, then we have either $ \{e_1, e_4, e_7\} \subseteq E(C_1)\cup E(C_2)$ or
$ \{e_1, e_4, e_7\} \subseteq E(C_1)\cup E(C_2) \cup E(T_2)$ where $C_1,C_2, T_2$ are  edge-disjoint balanced triangles. The former case contradicts Fact A. In the latter case, by Lemma~\ref{4-edges:mod3-to-3}, there is an integer $3$-flow flow $f$ such that $\supp(f) = \supp(\phi_4)$. By Lemma~\ref{f-extension} (considering $f$ as an integer $4$-flow),  $f$ can be extended to a $4$-NZF of  $G$, a contradiction.  Therefore $C_1$ and $C_2$ are not edge-disjoint.

If  $C_1$ is a triangle, then by  Lemma~\ref{shift-0},  one can obtain a  $Z_3$-flow $\phi_5$ from $\phi_4$ such that   $|E_{\phi_5=0}|\leq 4$ and  $E_{\phi_5=0} \subseteq  E(C_2)\cup  \{e_7\}$ since $C_1$ and $C_2$ are not  edge-disjoint.  Since $e_7$ is contained in a balanced triangle and $C_2$ is a balanced $4$-circuit, by Lemma \ref{E0:2-circuits} or Lemma~\ref{E0:34-circuits}, $(G,\sigma)$ has a $4$-NZF, a contradiction. Thus $C_1$ is a $4$-circuit. By symmetry, $C_2$ is also a $4$-circuit. This implies $e_3 \in E(C_1)$ and $e_6\in E(C_2)$ and $\{e_1, e_4\} \subseteq E_{\phi_4 = 0 } \subseteq \{e_1,e_4, e_7\}\subseteq  E(C_1)\cup E(C_2)$.

Since $C_1$ and $C_2$ are not edge-disjoint,   there is a $\beta \in \mathbb Z_3$ such that $\phi_6 = \phi_4 + \beta \chi(C_1)$ satisfying $E_{\phi_6=0} \subseteq E(C_2) \cup \{e_3\}$. Since $\{e_3, e_7 \} \subseteq E(H')$, by   Lemma~\ref{shift-0},  one can obtain a  $Z_3$-flow $\phi_7$ from $\phi_6$ such that   $E_{\phi_7=0} \subseteq E(C_2)$, a contradiction to Fact A. This completes the proof of (IV).

\medskip \noindent
{\bf (V)} For any triangle-path $H = T_1T_2T_3$  with each $T_i$ unbalanced, $H$ is an induced subgraph of $(G,\sigma)$.

\medskip \noindent
{\bf Proof of (V).} Suppose to the contrary that $H$ is  not induced.  Denote $V(H) = \{v_1,v_2,v_3,v_4,v_5\}$ where $V(T_i) = \{v_i, v_{i+1}, v_{i+2}\}$ for each $i = 1,2,3$.

Since by (III), $(G,\sigma)$ does not contain $FC_9$ or $FC_{10}$, $v_1$ and $v_5$ are not adjacent.  Then either $v_1$ and $v_4$ are adjacent or $v_2$ and $v_5$ are adjacent. Without loss of generality, assume $v_1$ and $v_4$ are adjacent. Denote $T_4 = v_1v_3v_4$. Since by (III) $(G,\sigma)$ does not contain $FC_3$, $T_4$ is balanced.  Then $T_2,T_3$ and $T_4$  form a $FC_1$, a contradiction to (III) again. This completes the proof of (V).

\begin{figure}
\begin{center}
\begin{tikzpicture}[scale=0.6]
\draw [fill=black] (-2,0) circle (0.06cm);
\draw [fill=black] (-1,1.5) circle (0.06cm);
\draw [fill=black] (0,0) circle (0.06cm);
\draw [fill=black] (1,1.5) circle (0.06cm);
\draw [fill=black] (2,0) circle (0.06cm);
\draw [fill=black] (3,1.5) circle (0.06cm);
\draw (0,0)--(-2,0)--(-1,1.5)--(1,1.5)--(0,0);\draw[dashed] (0,0)--(-1,1.5);
\draw[dashed] (1,1.5)--(2,0);
\draw (2,0)--(0,0);\draw (3,1.5)--(2,0);\draw(3,1.5)--(1,1.5);
\node at (0,-0.8){$\Gamma_1$};
\end{tikzpicture}
\begin{tikzpicture}[scale=0.6]
\draw [fill=black] (-2,0) circle (0.06cm);
\draw [fill=black] (-1,1.5) circle (0.06cm);
\draw [fill=black] (0,0) circle (0.06cm);
\draw [fill=black] (1,1.5) circle (0.06cm);
\draw [fill=black] (2,0) circle (0.06cm);
\draw [fill=black] (0,2.5) circle (0.06cm);
\draw (0,0)--(-2,0)--(-1,1.5)--(1,1.5)--(0,0);\draw[dashed] (0,0)--(-1,1.5);
\draw[dashed] (1,1.5)--(2,0);
\draw (2,0)--(0,0); \draw[dashed] (0,2.5)--(-1,1.5); \draw(0,2.5)--(1,1.5);
\node at (0,-0.8){$\Gamma_2$};
\end{tikzpicture}
\begin{tikzpicture}[scale=0.6]
\draw [fill=black] (0,0) circle (0.06);
\draw [fill=black] (18:1.5) circle (0.06);
\draw [fill=black] (90:1.5) circle (0.06);
\draw [fill=black] (162:1.5) circle (0.06);
\draw [fill=black] (234:1.5) circle (0.06);
\draw [fill=black] (306:1.5) circle (0.06);
\draw (18:1.5)--(90:1.5)--(162:1.5)--(234:1.5);
\draw (306:1.5)--(18:1.5);
\draw  (0,0)--(90:1.5)  (0,0)--(234:1.5) (0,0)--(306:1.5);
\draw[dashed](0,0)--(162:1.5) (0,0)--(18:1.5);
\node at (0,-2){$\Gamma_3$};
\end{tikzpicture}
\end{center}
\caption{\small Three graphs formed by four unbalanced triangles}
\label{FIG: confi-2}
\end{figure}
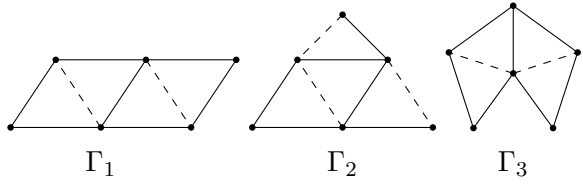

\medskip  \noindent
{\bf The final step.}
  By (III), $(G,\sigma)$ does not contain any graph of Figure \ref{FIG: confi} as a subgraph. We can further assume that $(G,\sigma)$ contains two edge-disjoint unbalanced triangles by (I).

By (IV), let  $H=T_1T_2\ldots T_m$ be a triangle-path such that each triangle $T_i$ is  unbalanced and $E(T_1)\cap E(T_m)=\emptyset$. We choose $H$ such that $m$ is as large as possible.  Since  $(G,\sigma)$ contains two edge-disjoint unbalanced triangles by (I) and   does not contain $FC_8$ by (III), we have $3\leq m \leq 4$.  One can easily see that $H$ admits a $4$-NZF.  Since $(G,\sigma)$ does not admit a $4$-NZF, $H \not = G$. Since $G$ is triangularly connected, there must be a triangle $T_5 \not =T_i$ for each $i=1,2,3$ such that $E(T_5)\cap E(H) \not = \emptyset$.

 If $m=4$, then $H=\Gamma_1$ or $\Gamma_3$ in Figure~\ref{FIG: confi-2}. If $m=3$,  by (V), $H$ is an induced subgraph and hence $|E(T_4)\cap E(H)|=1$.  Since by (III), $G$ does not contain $FC_i$ for each $i = 1, 2,5,6, 11$, $H$ must be  one of  $\Gamma_i$  in Figure~\ref{FIG: confi-2}.  It is easy to see that each $\Gamma_i$ admits a $4$-NZF and thus $(G,\sigma) \not = \Gamma_i$ for each $i$. Since $G$ is triangularly connected, there is a triangle $T_6$ such that $T_6 \not = T_i$ for each $i =1,2,3,4$ and $E(T_6)\cap E(H) \not = \emptyset$.  By the maximality of $m$ and since $(G,\sigma)$ does not contain $FC_i$ for each $i =1,2,4,5,6,11$,  we have $|E(T_6)\cap E(H)| \geq 2$. By (V), $H= \Gamma_3$ and  thus by (IV) $G=(W_5,\sigma^*)$, a contradiction. This completes the proof of the theorem.
\hfill $\Box$

\vspace{-0.5cm}


\begin{thebibliography}{s2}
\vspace{-0.15cm}
\bibitem{Bondy2008}
J.~A. Bondy, U.S.R. Murty, Graph Theory, Springer, New York, 2008.

\vspace{-0.15cm}
\bibitem{Bouchet1983}
A. Bouchet, Nowhere-zero integral flows on bidirected graph, {\JCTB}~34 (1983) 279--292.

\vspace{-0.15cm}

\bibitem{Diestel2010} R. Diestel,  Graph Theory, Fourth edn. Springer-Verlag (2010).

\vspace{-0.15cm}

\bibitem{CLLZ2018}
J. Cheng, Y. Lu, R. Luo and C.-Q. Zhang, Signed graphs: from modulo flows to integer-valued flows, \SIAMDM~32 (2018)  956-965.

\vspace{-0.15cm}

\bibitem{DLLLZZ}
M. DeVos, J. Li, Y. Lu, R. Luo, C.-Q. Zhang, Z. Zhang, Flows on flow-admissible signed graphs,  \JCTB~149 (2021) 198-221.
\vspace{-0.15cm}

\bibitem{Fan2008}
G. Fan, H.-J. Lai, R. Xu, C.-Q. Zhang, C. Zhou, Nowhere-zero 3-flows in triangularly connected graphs, \JCTB~98 (2008) 1325-1336.
\vspace{-0.15cm}

\bibitem{HuLi-wheel-2018}
L. Hu, X. Li, Nowhere-zero flows on signed wheels and signed fans, {\em Bull. Mayays. Sci. Soc.} 41 (2018) 1697-1709.

\vspace{-0.15cm}

\bibitem{Imrich2010} W.~Imrich, I.~Peterin, S.~\v{S}pacapan and C.-Q.~Zhang, NZ-flows in strong products of graphs, \JGT~64 (2010) 267-276.

\vspace{-0.15cm}
\bibitem{Lai2003} H.J.~Lai, Nowhere-zero $3$-flows in locally connected graphs, \JGT~42 (2003) 211-219.

\vspace{-0.15cm}

\bibitem{LLSSZ2018} Y. Lu,  R. Luo, M. Schubert,  E. Steffen, and C.-Q Zhang, Flows on signed graphs without long barbells,  \SIAMDM~34 (2020) 2166-2182.

\vspace{-0.15cm}

\bibitem{LLZh-EJC2018} Y. Lu, R. Luo, C.-Q. Zhang, Multiple weak 2-linkage and its applications on integer flows of signed graphs, \EJC~69 (2018) 36-48.

\vspace{-0.15cm}

\bibitem{LLZZ20202}  Y. Lu, R. Luo, C.-Q. Zhang, Z. Zhang,  Signed graphs, nonorientable surfaces, and integer flows, preprint.

\vspace{-0.15cm}

\bibitem{Kaiser2016} T. Kaiser and E. Rollov\'a, Nowhere-zero flows in signed series-parallel graphs, \SIAMDM~30 (2016) 1248-1258.

\vspace{-0.15cm}

\bibitem{Macajova2016JGT} E. M\'a\v{c}ajov\'a and  E. Rollov\'a, Nowhere-zero flows on signed complete and complete bipartite graphs, \JGT~78 (2015) 809-815.

\vspace{-0.15cm}

\bibitem{MS-eulerian} E.~Ma\v{c}ajova  and M  \v{S}koviera,  Nowhere-zero flows on signed eulerian graphs, \SIAMDM~31 (2017) 1937-1952.

\vspace{-0.15cm}

\bibitem{Raspaud2011} A.~Raspaud and X.~Zhu, Circular flow on signed graphs, \JCTB~{101} (2011) 464-479.

\vspace{-0.15cm}

\bibitem{Schubert2015}
M.~Schubert and E.~Steffen, Nowhere-zero flows on signed regular graphs, \EJC~{48} (2015) 34-47.

\vspace{-0.15cm}

\bibitem{Tutte54}
W.~T. Tutte, A contribution to the theory of chromatic polynomials, \CJM~6 (1954) 80-91.

\vspace{-0.15cm}

\bibitem{Tutte1949} W.~T. Tutte, On the imbedding of linear graphs in surfaces, \PLMS~Ser. 2 { 51} (1949) 474-483.

\vspace{-0.15cm}

\bibitem{WLZZ} X. Wang,  Y. Lu,   C.-Q Zhang, S. Zhang, Six flows on almost balanced signed graphs, \JGT~92 (2019) 394-404.

\vspace{-0.15cm}
\bibitem{West1996} D.~B. West, Introduction to Graph Theory, Upper Saddle River, NJ: Prentice Hall, (1996).

\vspace{-0.15cm}

\bibitem{Wu2014} Y. Wu, D. Ye,  W. Zang and C.-Q. Zhang, Nowhere-zero $3$-flows in signed graphs, \SIAMDM~28 (3) (2014) 1628-1637.

\vspace{-0.15cm}

\bibitem{Xu2005} R. Xu, C.-Q. Zhang, On flows in bidirected graphs, \DM~299 (2005) 335--343.

\vspace{-0.15cm}

\bibitem{Zyka1987} O. Z\'{y}ka, Nowhere-zero 30-flow on bidirected graphs, Thesis, Charles University, Praha, KAM-DIMATIA Series 87-26, 1987.


\end{thebibliography}
\end{document}